\definecolor{amber}{rgb}{1.0, 0.75, 0.0}
\newcommand{\on}{\operatorname}
\newcommand{\IN}{\mathbb{N}}
\newcommand{\IK}{\mathbb{K}}
\newcommand{\I}{\mathcal{I}}
\newcommand{\J}{\mathcal{J}}
\newcommand{\mf}{\mathfrak}
\newcommand{\Fin}{\mathrm{Fin}}
\newcommand{\Exh}{\mathrm{Exh}} 
\newcommand{\supp}{\mathrm{supp}} 
\newcommand{\CR}{\mathcal{CR}}
\newcommand{\w}{\mathbbm}
\newtheorem{theorem}{Theorem}[section]
\newtheorem{definition}[theorem]{Definition}
\newtheorem{lemma}[theorem]{Lemma}
\newtheorem{example}[theorem]{Example}
\newtheorem{proposition}[theorem]{Proposition}
\newtheorem{remark}[theorem]{Remark}
\newtheorem{corollary}[theorem]{Corollary}
\newtheorem{question}[theorem]{Question}
\title{Zoo of ideal Schauder bases}
\author{Adam Kwela}
\address[Adam Kwela]{Institute of Mathematics\\ Faculty of Mathematics\\ Physics and Informatics\\ University of Gda\'{n}sk\\ ul.~Wita Stwosza 57\\ 80-308 Gda\'{n}sk\\ Poland}
\email{Adam.Kwela@ug.edu.pl}
\urladdr{https://mat.ug.edu.pl/~akwela}
\author{Jaros{\l}aw Swaczyna}
\address[Jaros{\l}aw Swaczyna]{Institute of Mathematics, {\L}\'od\'z University of Technology, Aleje Politechniki 8, 93-590 {\L}\'od\'z, Poland}
\email{jaroslaw.swaczyna@p.lodz.pl}
\date{December 2024}
\thanks{The second-named author acknowledges with thanks funding received from NCN project SONATA BIS 13 No. 2023/50/E/ST1/00067}
\keywords{Filter bases in Banach spaces, ideal bases in Banach spaces, generalised bases, Borel and analytic ideals, ideal convergence of series}
\subjclass[2020]{Primary 46B15, 03E15, 03E75; Secondary 46B20, 54A20}
\begin{document}

\begin{abstract}

We investigate the notion of filter (equivalently: ideal) Schauder basis of a Banach space. We do so by providing bunch of new examples of such bases that are not the standard ones, especially within classical Banach spaces ($\ell_p$, $c_0$, James' space). Those examples lead to distinguishing and characterizing filters (equivalently: ideals) in terms of Schauder bases. We investigate the relationship between possibly basic sequences and filters (equivalently: ideals) on the set of natural numbers. 
\end{abstract}

\maketitle

\section{Preliminaries}
We aim to keep our paper readable for both set theorists and functional analysts; therefore, we recall some of the basic definitions from both fields.

Throughout the paper, we use standard set-theoretic notation and identify each natural number $n\in\omega=\{0,1,2,\ldots\}$ with the set $\{0,1,\ldots,n-1\}$.

\subsection{Ideals}
We will be concerned with ideals $\I$ on the set of natural numbers $\omega$. By an ideal on a countable set $M$ we mean a family $\I \subseteq \mathcal{P}(M)$ such that:
\begin{itemize}
    \item $\I$ contains all finite subsets of $M$;
    \item if $A,B \in \I$, then also $A\cup B \in \I$;
    \item if $A \in \I$ and $B \subseteq A$, then also $B \in I$.
\end{itemize}
We will focus primarily on non-trivial ideals, that is, ideals $\I$ such that $\I \neq \mathcal{P}(M)$, and our ideals will typically be defined on $M=\omega$. 

\begin{remark}
Clearly, the notion of ideal is dual to the notion of filter: $\I$ is an ideal if and only if the family of complements of elements of $\I$ forms a filter. Therefore, choice between working in terms of ideal bases and filter bases is purely aesthetic.
\end{remark}    

We treat the power set $\mathcal{P}(M)$ as the space $2^M$ of all functions $f:M\rightarrow 2$ (equipped with the product topology, where each space $2=\left\{0,1\right\}$ carries the discrete topology) by identifying subsets of $M$ with their characteristic functions. All topological and descriptive notions in the context of ideals will refer to this topology. 

An ideal $\I$ on $M$ is tall if for any infinite $A\subseteq M$ there is an infinite $B\subseteq A$ such that $B\in\I$. Equivalently, $\I$ is tall if $\I\restriction A\neq[A]^{<\omega}$, for any $A\notin\I$, where
\[
\I\restriction A=\left\{B\cap A:B\in\I\right\}.
\]
We say that an ideal $\I$ on $M$ is generated by a family $\mathcal{A}\subseteq\mathcal{P}(M)$ if 
\[
\I=\{B\subseteq M:\exists_{n\in\omega}\ \exists_{A_0,\ldots,A_n\in\mathcal{A}}\ B\subseteq A_0\cup\ldots\cup A_n\}.
\]
An ideal $\I$ is called a P-ideal if for each sequence $(A_n)\subseteq  \I$ there is a set $A\in\I$ such that $A_n\setminus A$ is finite for all $n\in\omega$. By a result of Solecki, every analytic P-ideal is $\bf{F_{\sigma\delta}}$ (see \cite{Solecki}).

If $\{X_m: m\in M\}$ is a family of sets, then $\sum_{m\in M}X_m=\{(m,x): m\in M,x\in X_m\}$ is its disjoint sum. The vertical section of a set $A\subseteq \sum_{m\in M}X_m$ at a point $m\in M$ is defined
by $A_{(m)} = \{x\in X_m: (m,x)\in A\}$. Let $\I$ and $\J$ be ideals on $M$ and $N$, respectively, and $\I_m$, for each $m\in M$, be an ideal on $X_m$. We define the following new ideals:
\begin{itemize}
	\item $\sum_{m\in M}\I_m=\{A\subseteq \sum_{m\in M}X_m: \forall_{m\in M}\ A_{(m)}\in\I_m\}$;
	\item $\{\emptyset\} \otimes  \I= \{A\subseteq \omega\times M: A_{(n)} \in \I\text{ for all }n\in \omega\}$;
    \item $\I\oplus\J=\{A\subseteq(\{0\}\times M)\cup(\{1\}\times N): A_{(0)}\in\I\text{ and }A_{(1)}\in\J\}$.
\end{itemize}

Two ideals $\I$ and $\J$ on $M$ and $N$, respectively, are isomorphic ($\I\cong\J$) if there is $f:N\to M$ such that:
\[
f^{-1}[A]\in\J\ \Leftrightarrow\ A\in\I,
\]
for all $A\subseteq M$. Two isomorphic ideals have the same descriptive complexity (see for instance \cite[Lemma 7.2]{Debs}). 

\begin{lemma}[Folklore]
\label{lem-isomorphisms}
Let $\I$ be an ideal.
\begin{itemize}
    \item[(a)] If $\I$ is not tall, then $\I\cong\I\oplus\Fin$.
    \item[(b)] If $\I\neq\Fin$, then $\I\cong\I\oplus\mathcal{P}(\omega)$.
\end{itemize}
\end{lemma}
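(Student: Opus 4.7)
My plan is to handle both parts through the same template: exhibit a suitable partition $\omega = A \sqcup B$ (with both pieces infinite), read off a decomposition of $\I$ as a direct sum, and then close the loop via the absorbent identities $\Fin \oplus \Fin \cong \Fin$ and $\mathcal{P}(\omega) \oplus \mathcal{P}(\omega) \cong \mathcal{P}(\omega)$. Both absorbent identities are immediate, since on any countable disjoint union the ideal of sets finite in each coordinate is just $\Fin$ (up to bijection of the countable set with $\omega$), and similarly the ideal of all subsets in each coordinate is just $\mathcal{P}(\omega)$.

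For part (a), start with any infinite $A$ witnessing non-tallness, i.e., $\I \restriction A = [A]^{<\omega}$, and, by shrinking $A$ to an infinite subset of infinite complement within $A$, arrange that $B = \omega \setminus A$ is also infinite (the shrunken set still witnesses non-tallness trivially). Then the equivalence
\[
X \in \I \ \Longleftrightarrow\ X \cap A \in \Fin \ \text{and}\ X \cap B \in \I \restriction B
\]
holds: $(\Rightarrow)$ uses $\I \restriction A = [A]^{<\omega}$ together with downward closure; $(\Leftarrow)$ is closure under finite unions. Fixing bijections of $A$ and $B$ with $\omega$, this rewrites as $\I \cong (\I \restriction B) \oplus \Fin$. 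Composing,
\[
\I \oplus \Fin \cong (\I \restriction B) \oplus \Fin \oplus \Fin \cong (\I \restriction B) \oplus \Fin \cong \I,
\]
using associativity of $\oplus$ (up to isomorphism) and the absorbent identity for $\Fin$.

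For part (b), since $\I \neq \Fin$, fix an infinite $C \in \I$; set $B = C$ and $A = \omega \setminus C$. Nontriviality of $\I$ forces $A$ infinite (otherwise $\omega = A \cup C$ would lie in $\I$). Since $B \subseteq C \in \I$, downward closure makes the condition on $X \cap B$ vacuous, so for every $X \subseteq \omega$,
\[
X \in \I \ \Longleftrightarrow\ X \cap A \in \I \restriction A,
\]
which under bijections with $\omega$ reads $\I \cong (\I \restriction A) \oplus \mathcal{P}(\omega)$. The same chain as in (a), now using the absorbent identity for $\mathcal{P}(\omega)$, gives
\[
\I \oplus \mathcal{P}(\omega) \cong (\I \restriction A) \oplus \mathcal{P}(\omega) \oplus \mathcal{P}(\omega) \cong (\I \restriction A) \oplus \mathcal{P}(\omega) \cong \I.
\]

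The one pitfall worth flagging is the temptation to try to prove $\I \restriction B \cong \I$ in (a), and $\I \restriction A \cong \I$ in (b), directly; these can fail in general (for instance, if $\I = \Fin \oplus \mathcal{P}(\omega)$ and $A$ is taken in the first summand). The whole point of routing through the absorbent identities is to avoid this genuine obstacle: one extra copy of $\Fin$ or of $\mathcal{P}(\omega)$ is harmless, which is all we need.
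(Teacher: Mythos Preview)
Your proof is correct. Both you and the paper begin from the same special set $A$ (a witness to non-tallness in (a), an infinite member of $\I$ in (b)), but the arguments diverge from there. The paper constructs a single explicit bijection $f\colon 2\times\omega\to\omega$ in one stroke: it fixes $\omega\setminus A$ pointwise on the $\{0\}$-coordinate and uses an arbitrary bijection $(\{0\}\times A)\cup(\{1\}\times\omega)\to A$ to fold the extra copy of $\omega$ into $A$. Your argument is instead algebraic, in the Eilenberg-swindle style: first observe $\I\cong \J\oplus\Fin$ (resp.\ $\J\oplus\mathcal{P}(\omega)$) for the appropriate restriction $\J$, then absorb the extra summand via $\Fin\oplus\Fin\cong\Fin$ (resp.\ $\mathcal{P}(\omega)\oplus\mathcal{P}(\omega)\cong\mathcal{P}(\omega)$). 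The paper's route is shorter and hands you the witnessing map explicitly; yours makes the mechanism behind the isomorphism more transparent and, as you rightly note, avoids the trap of trying to prove $\J\cong\I$ directly. A minor point: in (b) you invoke nontriviality of $\I$ to ensure $A=\omega\setminus C$ is infinite, which the paper's statement does not list explicitly but is the standing convention there; the case $\I=\mathcal{P}(\omega)$ is trivial anyway.
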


\begin{proof}
Without loss of generality, we may assume that $\I$ is an ideal on $\omega$. 

Since $\I$ is not tall ($\I\neq\Fin$, respectively), there is $A\notin\I$ such that $\I\restriction A=\Fin\restriction A$ (an infinite $A\in\I$, respectively). Let $g:((\{0\}\times A)\cup(\{1\}\times\omega))\to A$ be any bijection. Then $f:2\times\omega\to\omega$ given by:
\[
f(i,n)=\begin{cases}
    g(i,n), & \text{if }(i,n)\in(\{0\}\times A)\cup(\{1\}\times\omega),\cr
    n, & \text{otherwise,}
\end{cases}
\]
for all $(i,n)\in 2\times\omega$, witnesses $\I\cong\I\oplus\Fin$ ($\I\cong\I\oplus\mathcal{P}(\omega)$, respectively).
\end{proof}

\begin{example}
The following is a list of some well-known ideals:
\begin{itemize}
    \item The smallest ideal is $\Fin=[\omega]^{<\omega}$.  It is an $\bf{F_\sigma}$ non-tall P-ideal.
    \item $\Fin\oplus\mathcal{P}(\omega)$ is an ideal on $2\times\omega$ consisting of all $A\subseteq 2\times\omega$ such that $A\cap(\{0\}\times\omega)$ is finite. It is an $\bf{F_\sigma}$ non-tall P-ideal. 
    \item A summable ideal is an ideal of the form
\[
\I_{\w{r}}=\left\{A\subseteq\omega:\sum_{n\in A}r_n<\infty\right\},
\]
where $\w{r}=(r_n)$ is a sequence of positive reals such that $\sum_{n\in\omega}r_n=\infty$. Each summable ideal is an $\bf{F_\sigma}$ P-ideal. The most well-known summable ideal is
\[
\I_{1/n}=\left\{A\subseteq\omega:\sum_{n\in A}\frac{1}{n+1}<\infty\right\}.
\]
It is a tall ideal. Note that $\Fin$ is a summable ideal (given, eg. by any constant sequence $\w{r}$) and $\Fin\oplus\mathcal{P}(\omega)$ is isomorphic to the summable ideal $\{A\subseteq\omega: A\cap\{2n:n\in\omega\}\in\Fin\}$ given by the sequence:
\[
r_n=\begin{cases}
    \frac{1}{2^n} & \text{if }n\text{ is odd},\cr
    1 & \text{if }n\text{ is even}.
\end{cases}
\]
\item The ideal of asymptotic density zero is given by:
\[
\I_d= \left\{A \subseteq \omega: \frac{|A \cap (n+1)|}{n+1} \to 0 \right\}
\]
(here we use the standard set-theoretic convention and treat a number $n\in\omega$ as the set $\{0,1,\ldots,n-1\}$). It is an $\bf{F_{\sigma\delta}}$, but not $\bf{F_{\sigma}}$, tall P-ideal. 
\item $\{\emptyset\}\otimes\Fin$ is an ideal on $\omega^2$ consisting of all $A\subseteq\omega^2$ such that $A_{(n)}=\{k\in\omega: (n,k)\in A\}\in\Fin$ for all $n\in\omega$.  It is an $\bf{F_{\sigma\delta}}$, but not $\bf{F_{\sigma}}$, non-tall P-ideal. 
\end{itemize}
For more examples of ideals see for instance \cite{Farah}.
\end{example}

\subsection{Submeasures}

A function $\phi:\mathcal{P}(\omega)\to[0,\infty]$ is called a submeasure if $\phi(\emptyset)=0$, $\phi(\{n\})<\infty$, for each $n\in\omega$, and 
$$\phi(A)\leq\phi(A\cup B)\leq\phi(A)+\phi(B)$$
for all $A,B\subseteq\omega$. A submeasure $\phi$ is lower semicontinuous (lsc, in short) if $\phi(A)=\lim_{n\to\infty}\phi(A\cap n)$ for all $A\subseteq\omega$. The support of a submeasure $\phi$ is defined as $\supp(\phi)=\{n\in\omega: \phi(\{n\})\neq 0\}$.

Mazur in \cite[Lemma 1.2]{Mazur} proved that an ideal on $\omega$ is $\bf{F_\sigma}$ if and only if it is of the form:
$$\Fin(\phi)=\left\{A\subseteq\omega: \phi(A)<\infty\right\}$$
for some lower semicontinuous submeasure $\phi$ such that $\omega\notin\Fin(\phi)$ (see also \cite[Theorem 1.2.5]{Farah}). Similarly, Solecki in \cite[Theorem 3.1]{SoleckiExh} showed that an ideal on $\omega$ is an analytic P-ideal if and only if it is of the form:
$$\Exh(\phi)=\left\{A\subseteq\omega: \lim_{n\to\infty}\phi(A\setminus n)=0\right\}$$
for some lower semicontinuous submeasure $\phi$ such that $\omega\notin\Exh(\phi)$ (see also \cite[Theorem 1.2.5]{Farah}). 

A submeasure $\phi$ is non-pathological, if
$$\phi(A)=\sup\{\mu(A): \mu\text{ is a measure with }\supp(\mu)\in\Fin\text{ and }\mu(B)\leq\phi(B)\text{ for all }B\subseteq\omega\},$$
for all $A\subseteq\omega$. We say that an $\bf{F_\sigma}$ ideal (analytic P-ideal) is non-pathological, if it is of the form $\Fin(\phi)$ ($\Exh(\phi)$, respectively), for some non-pathological submeasure $\phi$.

\subsection{Ideal convergence}

Following \cite{KSW}, for an ideal $\I$ on $\omega$ and a sequence $(x_n)$ of elements of some metric space $(X,d)$, we say that $(x_n)$ is $\I$-convergent to some $x\in X$ if almost all $x_n$'s in the sense of $\I$, are arbitrarily close to $x$, that is, for every $\varepsilon>0$ we have $\{n \in \omega: d(x_n , x)> \varepsilon \} \in \I$. In such a case, we write $x_n \to_\I x$ or $\lim_{n,\I}x_n=x$. Note that for the particular ideal of asymptotic density zero, ideal convergence has been previously considered under the name of statistical convergence (see e.g. \cite{Fast}, \cite{Fridy}, \cite{Salat}, \cite{Steinhaus}).

Similarly, for a Banach space $X$, we say that a series $(\w{x}_n)$ is $\I$-convergent to some $\w{x}\in X$, if the sequence $(\w{a}_n)$ of partial sums, given by $\w{a}_n = \sum_{i=0}^n \w{x}_n$, is $\I$-convergent to $X$. In such a case we write 
\[
\w{x}=\lim_{n,\I} \w{a}_n = \sum_{n,\I} \w{x}_n.
\]
Note that it means "almost all partial sums are close to $\w{x}$" rather than "sums of almost all elements are close to $\w{x}$". In fact, even though the second statement might be more intuitive on the first thought, it does not make sense at all. 

\subsection{Schauder bases}

Given a Banach space $X$ over the real or complex field $\IK$, by a Schauder basis of $X$ we usually mean such a sequence $(\w{u}_n)\subseteq X$ that for any $\w{x}\in X$ there exists exactly one sequence of scalars $(\alpha_n)\subseteq \IK$ such that $\w{x}= \sum_{n=0}^{\infty} \alpha_n \w{u}_n$. In such a case we can associate with it a sequence of functionals $\w{u}_n^\star \colon X \to \IK $ such that $\w{u}_i^\star \colon \sum_{n=0}^{\infty} \alpha_n \w{u}_n \mapsto \alpha_i$. From Banach Space Theory, it is a standard yet nontrivial fact that all $\w{u}_n^\star$'s are continuous and linear (see eg. \cite[Theorem 1.1.3]{AK}). Throughout the paper we will be dealing with specific examples of bases and we find fixing coordinate functionals convenient, so by a Schauder basis of $X$ we mean the sequence of pairs $\hat{\w{u}}=(\w{u}_n,\w{u_n^\star})$.

A Schauder basis $\hat{\w{u}}=(\w{u}_n,\w{u_n^\star})$ of a Banach space $X$ is unconditional, if the series $\sum_{n=0}^{\infty} \w{u}_n^\star(\w{x}) \w{u}_n$ converges unconditionally, for every $\w{x}\in X$.

Throughout the whole paper by $\hat{\w{e}}=(\w{e}_n,\w{e_n^\star})$ we will denote the standard Schauder basis of $c_0$. Note that $\hat{\w{e}}$ is also a Schauder basis of each $\ell_p$, $1\leq p<\infty$. It is known that $\hat{\w{e}}$ is unconditional in $c_0$ and in each $\ell_p$, $1\leq p<\infty$.

\begin{lemma}[Folklore]
\label{lem-monotonicznosc}
Let $\hat{\w{u}}$ be an unconditional Schauder base of $X$. If $\sum_n a_n \w{u}_n\in X$ and $(b_n)\subseteq \IK$ is such that $|b_n|\leq |a_n|$, for every $n \in \omega$, then $\sum_n b_n \w{u}_n \in X$.
\end{lemma}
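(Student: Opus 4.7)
The plan is to reduce the statement to the well-known multiplier formulation of unconditionality. Recall that $\hat{\w{u}}$ being an unconditional Schauder basis is equivalent (see, e.g., \cite[Proposition 3.1.3]{AK}) to the following statement: for every $\w{x}=\sum_n a_n\w{u}_n\in X$ and every bounded scalar sequence $(\lambda_n)\in\ell_\infty$, the series $\sum_n \lambda_n a_n \w{u}_n$ converges in $X$, and moreover there exists an unconditional basis constant $K\geq 1$ such that
\[
\left\|\sum_n \lambda_n a_n \w{u}_n\right\|\leq K\, \sup_n|\lambda_n|\cdot \left\|\sum_n a_n \w{u}_n\right\|.
\]

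With this tool in hand, the conclusion is immediate. First observe that whenever $a_n=0$ the hypothesis $|b_n|\leq |a_n|=0$ forces $b_n=0$. Hence we may define
\[
\lambda_n=\begin{cases} b_n/a_n & \text{if } a_n\neq 0,\\ 0 & \text{if } a_n=0, \end{cases}
\]
so that $|\lambda_n|\leq 1$ and $\lambda_n a_n=b_n$ for every $n\in\omega$. Applying the multiplier property to $\w{x}=\sum_n a_n\w{u}_n$ yields convergence of $\sum_n \lambda_n a_n\w{u}_n=\sum_n b_n \w{u}_n$ in $X$, which is exactly what we want.

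The only substantive step is the multiplier characterization of unconditionality itself, which is classical. It is proved via a uniform boundedness argument applied to the signed-projection operators $S_F\colon \sum_n a_n\w{u}_n\mapsto \sum_{n\in F}a_n\w{u}_n$ for finite $F\subseteq\omega$; one extends from $\{0,1\}$-valued to $[-1,1]$-valued multipliers by convex combinations, and then to the complex case by splitting the multiplier into its real and imaginary parts. Since this is standard textbook material, there is no essential obstacle here — the whole point of the lemma is to package this textbook fact in the exact form that will be reused later in the paper.
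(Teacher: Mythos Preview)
Your proof is correct and follows essentially the same approach as the paper: both reduce the claim to \cite[Proposition 3.1.3]{AK}. The only difference is cosmetic---the paper uses the finite-sum comparison inequality from that proposition to run a Cauchy-tail argument, whereas you invoke the multiplier formulation directly via $\lambda_n=b_n/a_n$; either way the content is the same classical characterization of unconditionality.
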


\begin{proof}
Since $\hat{\w{u}}$ is unconditional, by \cite[Proposition 3.1.3]{AK}, there is $K\geq 1$ such that
    \[
    \left\| \sum_{i=0}^n c_i \w{u}_i \right\|_X \leq K \left\| \sum_{i=0}^n d_i \w{u}_i \right\|_X,
\]
for all $n\in\omega$ and scalars $c_0,\ldots,c_n,d_0,\ldots,d_n$ such that $|c_i|\leq |d_i|$, for $i\leq n$.

Fix $n\in\omega$. Then for any $m>n$ 
    \[
    \left\| \sum_{i=n}^m b_n \w{u}_n \right\|_X \leq K \left\| \sum_{i=n}^m a_n \w{u}_n \right\|_X \leq  K^2 \left\| \sum_{i=n}^\infty a_n \w{u}_n \right\|_X \xrightarrow{n \to \infty} 0.
\]
Therefore $ \sum_{n=0}^\infty b_n \w{u}_n  $ is convergent.
\end{proof}

\section{Ideal Schauder bases}

\subsection{Definition}

Natural way for considering ideal version of Schauder bases is to simply demand $\w{x}=\sum_{n,\I} \alpha_n \w{e}_n$ instead of $\w{x}=\sum_{n} \alpha_n \w{e}_n$ in which case linear coordinate functionals also appear, yet their continuity is not so clear anymore (see \cite{CGK,RKS,KS,Kochanek}  for more detailed discussion of this issue). Since throughout the presented paper we find fixing coordinate functionals convenient, therefore we consider the following definition.

If $X$ is a Banach space and $\hat{\w{a}}=(\w{a}_n,\w{a}^\star_n)\subseteq X\times \IK^X$ are such that: 
\[
\w{a}^\star_n(\w{a}_m)=\delta_{nm}=\begin{cases}
    1, & \text{if }n=m,\\
    0, & \text{if }n\neq m,
\end{cases}
\]
(which represents equality $\w{a}_n=\w{a}_n +\sum_{m\neq n}0\cdot \w{a}_m$), then we denote: 
\[
S_n(\hat{\w{a}})(\w{x})=\sum_{i=0}^n \w{a}_i^\star(\w{x}) \w{a}_i,
\]
for all $\w{x}\in X$ and $n\in\omega$. For an ideal $\I$ on $\omega$, we say that $\hat{\w{a}}$ is an $\I$-Schauder basis of $X$ if
\[
\{n\in\omega: \|\w{x}-S_n(\hat{\w{a}})(\w{x})\|_X\geq\varepsilon\}\in\I,
\]
that is $\w{x}=\sum_{n,\I} \w{a}^\star_n(\w{x})\w{a}_n$, for every $\w{x}\in X$ and $\varepsilon>0$.

It is a natural question if our definition is equivalent to the original one, i.e. if the sequence $(\w{a}_n^\star (\w{x}))$ is the only possible sequence of coordinates. Now we will explain why uniqueness of coefficients is not a big deal in the context of presented paper (however, note that in general there might be an issue if some of $\w{a}_n^\star$'s would not be continuous). Our reasoning is the same as in the case of standard bases (see eg. \cite[Definition 1.1.2, Theorem 1.1.3]{AK}).

\begin{lemma}
\label{L:wspolczynniki}
Let $(\w{a}_n)$ be such a sequence in some Banach space $X$ that $\w{a}_n\notin \overline{\on{span}\{\w{a}_m:m \neq n\}}$ for every $n \in \omega$. Fix two sequences of scalars $(\alpha_n)$ and $(\beta_n)$. Then for every $l\in \omega$ such that $\alpha_l\neq \beta_l$ there exists $\varepsilon>0$ such that $\|\sum_{i=0}^k \alpha_i\w{a}_i - \sum_{i=0}^{k'} \beta_i\w{a}_i\|_X>\varepsilon$ for every $k,k'>l$.
\end{lemma}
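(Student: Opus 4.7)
The plan is to exploit the geometric content of the hypothesis: $\w{a}_l\notin\overline{\on{span}\{\w{a}_m:m\neq l\}}$ is equivalent to saying that the distance
\[
d:=\on{dist}\bigl(\w{a}_l,\overline{\on{span}\{\w{a}_m:m\neq l\}}\bigr)
\]
is strictly positive. This positive number $d$, rescaled by $|\alpha_l-\beta_l|$, will be what gives $\varepsilon$.

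The key algebraic step is to separate out the $l$-th coordinate. For any $k,k'>l$, both sums include the term at index $l$, so I would write
\[
\sum_{i=0}^k \alpha_i\w{a}_i-\sum_{i=0}^{k'}\beta_i\w{a}_i=(\alpha_l-\beta_l)\w{a}_l+y_{k,k'},
\]
where $y_{k,k'}:=\sum_{i\leq k,\,i\neq l}\alpha_i\w{a}_i-\sum_{i\leq k',\,i\neq l}\beta_i\w{a}_i$ is a (finite) element of $\on{span}\{\w{a}_m:m\neq l\}$.

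Since $\alpha_l-\beta_l\neq 0$, the vector $-y_{k,k'}/(\alpha_l-\beta_l)$ also lies in $\on{span}\{\w{a}_m:m\neq l\}\subseteq\overline{\on{span}\{\w{a}_m:m\neq l\}}$, so by the definition of $d$ we get
\[
\left\|\w{a}_l-\frac{-y_{k,k'}}{\alpha_l-\beta_l}\right\|_X\geq d.
\]
Multiplying through by $|\alpha_l-\beta_l|$ yields $\|(\alpha_l-\beta_l)\w{a}_l+y_{k,k'}\|_X\geq|\alpha_l-\beta_l|\cdot d$, which is a positive lower bound independent of $k,k'$. Choosing, say, $\varepsilon:=|\alpha_l-\beta_l|\cdot d/2$ finishes the proof.

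There is essentially no obstacle here; the only subtlety worth flagging is that the remainder $y_{k,k'}$ genuinely depends on $k,k'$, but the distance bound is uniform over all elements of the subspace, so the same $\varepsilon$ works for every pair $k,k'>l$. One could equivalently phrase the argument via Hahn--Banach, producing a continuous functional $\phi$ with $\phi(\w{a}_l)=1$ and $\phi\equiv 0$ on $\overline{\on{span}\{\w{a}_m:m\neq l\}}$, then applying $\phi$ to the difference to obtain $|\alpha_l-\beta_l|\leq\|\phi\|\cdot\|\text{difference}\|$; this is morally the same estimate and would fit the style of \cite{AK} referenced just above the lemma.
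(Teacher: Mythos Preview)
Your proof is correct and follows essentially the same approach as the paper: both decompose the difference as $(\alpha_l-\beta_l)\w{a}_l$ plus an element of $\on{span}\{\w{a}_m:m\neq l\}$ and then invoke the strictly positive distance from $\w{a}_l$ (or its scalar multiple) to that closed subspace, noting this bound is independent of $k,k'$. The paper's write-up is slightly terser, using $\on{dist}((\alpha_l-\beta_l)\w{a}_l,\overline{\on{span}\{\w{a}_m:m\neq l\}})$ directly rather than first normalizing, but the content is identical.
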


\begin{proof}
Note that $\sum_{i=0}^k \alpha_i\w{a}_i - \sum_{i=0}^{k'} \beta_i\w{a}_i=(\alpha_l-\beta_l)\w{a}_l-\w{b}$ for some $\w{b}\in \on{span}\{\w{a}_m:m \neq l\}$. Thus
\[
\left\|\sum_{i=0}^k \alpha_i\w{a}_i - \sum_{i=0}^{k'} \beta_i\w{a}_i\right\|_X\geq \on{dist}\left((\alpha_l-\beta_l)\w{a}_l, \overline{\on{span}\{\w{a}_m:m \neq l\}}\right)>0.
\]
Note that the distance above does not depend on $k$ and $k'$. 
\end{proof}

\begin{proposition}
\label{P:wspolczynniki}
Let $\hat{\w{a}}=(\w{a}_n,\w{a}^\star_n)\subseteq X\times X^\star$ be such that $\w{a}^\star_n(\w{a}_m)=\delta_{nm}$ for all $n,m\in\omega$. Fix $\w{x}\in X$ and a sequence of scalars $(\beta_n)$. Assume there are two nontrivial ideals $\I$ and $\J$ such that:
\[
\w{x}=\sum_{n,\I} \w{a}^\star_n(\w{x}) \w{a}_n= \sum_{n,\J} \beta_n \w{a}_n.
\]
Then $\w{a}_n^\star(x)=\beta_n$ for every $n\in \omega$. 
\end{proposition}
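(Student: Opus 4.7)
The plan is to reduce the claim to Lemma \ref{L:wspolczynniki} and then combine the two ideal-convergence regimes, using the fact that a nontrivial ideal on $\omega$ cannot contain $\omega$ itself (and hence the complement of any of its members is infinite).

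First, I would verify the hypothesis of Lemma \ref{L:wspolczynniki}, namely that $\w{a}_n\notin\overline{\on{span}\{\w{a}_m : m\neq n\}}$ for every $n$. This is exactly where I use $\w{a}^\star_n\in X^\star$ (and not just $\w{a}^\star_n\in \IK^X$): continuity and linearity of $\w{a}^\star_n$ together with $\w{a}^\star_n(\w{a}_m)=0$ for all $m\neq n$ imply that $\w{a}^\star_n$ vanishes on the whole closed linear span $\overline{\on{span}\{\w{a}_m : m\neq n\}}$, yet $\w{a}^\star_n(\w{a}_n)=1$, so $\w{a}_n$ cannot belong to that closed span.

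Now suppose toward contradiction that $\w{a}^\star_l(\w{x})\neq \beta_l$ for some $l\in\omega$. Applying Lemma \ref{L:wspolczynniki} with $\alpha_n:=\w{a}^\star_n(\w{x})$ yields $\varepsilon>0$ such that
\[
\left\|\, S_k(\hat{\w{a}})(\w{x}) - \sum_{i=0}^{k'}\beta_i \w{a}_i \,\right\|_X > \varepsilon
\]
for all $k,k'>l$. On the other hand, the two convergence hypotheses give
\[
A := \{k\in\omega : \|\w{x}-S_k(\hat{\w{a}})(\w{x})\|_X\geq \varepsilon/3\}\in\I, \qquad B := \left\{k\in\omega : \left\|\w{x}-\sum_{i=0}^{k}\beta_i \w{a}_i\right\|_X\geq \varepsilon/3\right\}\in\J.
\]

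To close the loop, I would invoke nontriviality: since $\I,\J\neq\mathcal{P}(\omega)$ and both are closed under finite unions and contain finite sets, the complements of $A\cup\{0,\ldots,l\}\in\I$ and $B\cup\{0,\ldots,l\}\in\J$ are nonempty (in fact infinite). So I can pick indices $k,k'>l$ with $k\notin A$ and $k'\notin B$, and then the triangle inequality yields
\[
\left\|\, S_k(\hat{\w{a}})(\w{x}) - \sum_{i=0}^{k'}\beta_i \w{a}_i \,\right\|_X \leq \|S_k(\hat{\w{a}})(\w{x})-\w{x}\|_X + \left\|\w{x}-\sum_{i=0}^{k'}\beta_i \w{a}_i\right\|_X < \tfrac{2\varepsilon}{3},
\]
contradicting the lower bound from Lemma \ref{L:wspolczynniki}. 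I do not expect a genuine obstacle: the only conceptual step is recognising that continuity of the coordinate functionals is what activates Lemma \ref{L:wspolczynniki}, while nontriviality of $\I$ and $\J$ is used only to guarantee arbitrarily large indices outside $A$ and $B$.
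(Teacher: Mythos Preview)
Your proof is correct and follows essentially the same approach as the paper: verify the hypothesis of Lemma~\ref{L:wspolczynniki} via continuity of the $\w{a}^\star_n$, then combine the lower bound from that lemma with the two ideal-convergence hypotheses and nontriviality to derive a contradiction. The only cosmetic difference is that the paper fixes a single $k$ from the $\I$-side with error $<\varepsilon/2$ and then shows the $\J$-side error exceeds $\varepsilon/2$ for \emph{all} $k'>l$ (yielding a cofinite set in $\J$), whereas you pick one index from each side with a $\varepsilon/3$ cutoff; both routes are equivalent.
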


\begin{proof}
    Note that if $n\neq m$ then the condition $\w{a}^\star_n(\w{a}_m)=0$ implies $\w{a}_m\in \on{Ker}(\w{a}^\star_n)$. By the continuity of $\w{a}_n^\star$, $\on{Ker}(\w{a}^\star_n)$ is closed, so $\overline{\on{span}\{\w{a}_m: m \neq n\}}\subseteq \on{Ker}(\w{a}^\star_n)$. Since $\w{a}_n^\star(\w{a}_n)=1$, we obtain $\w{a}_n \notin \on{ker}\w{a}^\star_n$ for every $n$. If $\beta_n\neq \w{a}_n^\star(\w{x})$ for some $\w{x}\in X$ and $n\in \omega$, then, by Lemma \ref{L:wspolczynniki}, there is some $\varepsilon>0$ such that 
    \[
    \left\|\sum_{i=0}^k \w{a}^\star_i(\w{x}) \w{a}_i- \sum_{i=0}^{k'} \beta_i \w{a}_i \right\|>\varepsilon
    \]
    for every $k,k'>n$. But since $\w{x}=\sum_{n,\I} \w{a}^\star_n(\w{x}) \w{a}_n$ and $\I\neq P(\omega)$, there exists $k>n$ such that 
    \[
     \left\|\w{x}-\sum_{i=0}^k \w{a}^\star_i(\w{x}) \w{a}_i\right\|<\frac{\varepsilon}{2},
    \]
    hence 
    \[
     \left\|\w{x}-\sum_{i=0}^{k'} \beta_i \w{a}_i\right\|>\frac{\varepsilon}{2},
    \]
    for all $k'>n$, contradicting $\w{x}= \sum_{n,\J} \beta_i \w{a}_n$.
\end{proof}

Above considerations leads us to some useful Corollaries.

\begin{corollary}
Let $X$ be a Banach space and $\I$ be a nontrivial ideal. Assume that $\hat{\w{a}}=(\w{a}_n,\w{a}^\star_n)\subseteq X\times X^\star$ is an $\I$-Schauder basis of $X$ such that $\w{a}^\star_n(\w{a}_m)=\delta_{nm}$ for all $n,m\in\omega$. If $\hat{\w{a}}$ is also a $\J$-Schauder basis of $X$, for some nontrivial ideal $\J$ (in particular, for $\J=\I$), then for every $\w{x}\in X$ there exists a unique sequence of scalars $(\alpha_n)$ ($=(\w{a}_n^\star(\w{x})))$) such that $\w{x}=\sum_{n,\J}\alpha_n \w{a}_n$.
\end{corollary}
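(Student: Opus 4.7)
The plan is to obtain this corollary as an immediate consequence of Proposition \ref{P:wspolczynniki}, so the main work has essentially been done already and only a careful unpacking is needed.

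First I would note that existence of an expansion of the required form is free: since $\hat{\w{a}}$ is a $\J$-Schauder basis of $X$, by the very definition of $\J$-Schauder basis, for every $\w{x} \in X$ the sequence of scalars $\alpha_n = \w{a}_n^\star(\w{x})$ already satisfies $\w{x} = \sum_{n,\J}\alpha_n \w{a}_n$. So only uniqueness is at issue.

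For uniqueness, I would fix $\w{x} \in X$ and suppose $(\beta_n)$ is any sequence of scalars with $\w{x} = \sum_{n,\J}\beta_n \w{a}_n$. Because $\hat{\w{a}}$ is also an $\I$-Schauder basis, we simultaneously have $\w{x} = \sum_{n,\I}\w{a}^\star_n(\w{x}) \w{a}_n$. Both $\I$ and $\J$ are nontrivial by assumption, so the hypotheses of Proposition \ref{P:wspolczynniki} are met (note that the continuity of the $\w{a}_n^\star$, which is used inside that proposition to close $\operatorname{Ker}(\w{a}_n^\star)$, follows from the assumption $\w{a}_n^\star \in X^\star$ stated in the corollary). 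The conclusion of Proposition \ref{P:wspolczynniki} then gives $\beta_n = \w{a}_n^\star(\w{x})$ for every $n \in \omega$, which is exactly uniqueness.

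The only conceptual step worth highlighting is that the two ideals $\I$ and $\J$ play asymmetric roles here: $\I$ is needed only to guarantee that $(\w{a}_n^\star(\w{x}))$ is a valid coefficient sequence, while the whole argument about uniqueness reduces, via Lemma \ref{L:wspolczynniki}, to the nontriviality of $\J$ (so that arbitrarily late partial sums $\sum_{i \le k'} \beta_i \w{a}_i$ with $k' \notin \J$-neighbourhood exist and can be pushed close to $\w{x}$). No genuine obstacle remains beyond citing the preceding proposition in the right form.
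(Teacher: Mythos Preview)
Your proposal is correct and matches the paper's own proof exactly: the paper's argument consists of the single line ``Follows from Proposition~\ref{P:wspolczynniki}'', and you have simply unpacked that citation carefully (existence from the definition of $\J$-Schauder basis, uniqueness from the proposition). One small remark on your closing commentary: inside the proof of Proposition~\ref{P:wspolczynniki} the nontriviality of $\I$ is also used (to pick the index $k$), so the roles of $\I$ and $\J$ are not quite as asymmetric as you suggest---but this does not affect the validity of your argument.
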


\begin{proof}
Follows from Proposition \ref{P:wspolczynniki}.
\end{proof}

\begin{corollary}
Let $X$ be a Banach space and $\I$ be a nontrivial analytic ideal. Assume that $(\w{a}_n)\subseteq X$ is such that for every $\w{x}\in X$ there exists a unique sequence of scalars $(\alpha_{n,\w{x}})$ such that $\w{x}=\sum_{n,\I}\alpha_{n,\w{x}} \w{a}_n$. If for a nontrivial ideal $\J$ and $\w{x}\in X$ there is a sequence of scalars $(\beta_n)$ such that $\w{x}=\sum_{n,\J}\beta_n \w{a}_n$, then $(\beta_n)=(\alpha_{n,\w{x}})$.
\end{corollary}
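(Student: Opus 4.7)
The plan is to use the uniqueness hypothesis to manufacture coordinate functionals $\w{a}_n^\star$ associated with the sequence $(\w{a}_n)$, verify they are continuous linear (this is where analyticity of $\I$ enters), and then reduce the conclusion to the already-proved Proposition \ref{P:wspolczynniki}. Setting $\w{a}_n^\star(\w{x}):=\alpha_{n,\w{x}}$ gives well-defined functionals on all of $X$. Linearity is immediate: if $\w{x}=\sum_{n,\I}\alpha_{n,\w{x}}\w{a}_n$ and $\w{y}=\sum_{n,\I}\alpha_{n,\w{y}}\w{a}_n$, then by linearity of $\I$-convergence of the partial sums we have $\lambda\w{x}+\mu\w{y}=\sum_{n,\I}(\lambda\alpha_{n,\w{x}}+\mu\alpha_{n,\w{y}})\w{a}_n$, and uniqueness forces $\alpha_{n,\lambda\w{x}+\mu\w{y}}=\lambda\alpha_{n,\w{x}}+\mu\alpha_{n,\w{y}}$. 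The biorthogonality relations $\w{a}_n^\star(\w{a}_m)=\delta_{nm}$ follow the same way from the fact that $\w{a}_m=\sum_n\delta_{nm}\w{a}_n$ is $\Fin$-convergent and hence $\I$-convergent.

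The main obstacle is the continuity of each $\w{a}_n^\star$, and it is precisely here that the analyticity of $\I$ is essential, as emphasised in the introductory discussion and in \cite{CGK,RKS,KS,Kochanek}. The approach would be to adapt the classical proof of Schauder's theorem: equip $X$ with the auxiliary norm $\|\w{x}\|':=\sup_n\|S_n(\hat{\w{a}})(\w{x})\|_X$, use analyticity of $\I$ to show that this supremum is finite and that $(X,\|\cdot\|')$ is complete, and then invoke the open mapping theorem for the identity map $(X,\|\cdot\|')\to(X,\|\cdot\|_X)$ to obtain equivalence of the two norms. Uniform boundedness of the operators $S_n(\hat{\w{a}})$ will then follow, and continuity of $\w{a}_n^\star$ is extracted from the identity $S_n(\hat{\w{a}})-S_{n-1}(\hat{\w{a}})=\w{a}_n^\star(\cdot)\w{a}_n$ by composing with any $\phi\in X^\star$ with $\phi(\w{a}_n)=1$, which exists by Hahn--Banach since $\w{a}_n\neq 0$ (as otherwise uniqueness of representation would fail).

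With continuity of each $\w{a}_n^\star$ established, $\hat{\w{a}}=(\w{a}_n,\w{a}_n^\star)\subseteq X\times X^\star$ satisfies the hypotheses of Proposition \ref{P:wspolczynniki}: we have $\w{a}_n^\star(\w{a}_m)=\delta_{nm}$, by construction $\w{x}=\sum_{n,\I}\w{a}_n^\star(\w{x})\w{a}_n$, and by hypothesis $\w{x}=\sum_{n,\J}\beta_n\w{a}_n$, with both $\I$ and $\J$ nontrivial. That proposition then yields $\beta_n=\w{a}_n^\star(\w{x})=\alpha_{n,\w{x}}$ for all $n\in\omega$, as required.
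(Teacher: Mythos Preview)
Your proposal is correct and follows the paper's route: reduce to Proposition~\ref{P:wspolczynniki} after securing continuity of the coordinate functionals, which the paper does by directly citing \cite[Theorem~A]{RKS} rather than sketching its proof. Your sketch is in the right spirit, though the step ``use analyticity of $\I$ to show that this supremum is finite and that $(X,\|\cdot\|')$ is complete'' is exactly the nontrivial substance of \cite[Theorem~A]{RKS} and is not a routine adaptation of the classical Schauder argument.
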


\begin{proof}
Follows from Proposition \ref{P:wspolczynniki} and \cite[Theorem A]{RKS}.
\end{proof}

Clearly the definition of Schauder basis may be considered also in a normed vector spaces, and we will do so in Section \ref{S:normed}.

\subsection{Critical ideal}

It is known (see \cite[Proof of Theorem B]{RKS}) that for each Banach space $X$ and each $\hat{\w{a}}=(\w{a}_n,\w{a}^\star_n)\subseteq X\times \IK^X$ there is an ideal $\CR(X,\hat{\w{a}})$ on $\omega$ such that the following are equivalent for every ideal $\I$ on $\omega$:
\begin{itemize}
    \item $\hat{\w{a}}$ is an $\I$-Schauder basis of $X$,
    \item $\CR(X,\hat{\w{a}})\subseteq\I$.
\end{itemize}
We say that $\hat{\w{a}}$ is an ideal Schauder basis of $X$ if $\CR(X,\hat{\w{a}})\neq\mathcal{P}(\omega)$. Obviously, if $\CR(X,\hat{\w{a}})=\Fin$, then $\hat{\w{a}}$ is a Schauder basis of $X$.

\subsection{Simple ideal Schauder bases}

In this paper we are mainly interested in ideal Schauder bases of special form:

\begin{definition}
\label{def-simple}
Let $X$ be a vector space, $\hat{\w{a}}=(\w{a}_n,\w{a}^\star_n)\subseteq X\times X^\star$ and $\hat{\w{u}}=(\w{u}_n,\w{u}^\star_n)\subseteq X\times X^\star$. We say that $\hat{\w{a}}$ is simple over $\hat{\w{u}}$ if there are $D\subseteq\omega$, an interval-to-one $h:D\to\omega$ (i.e., $h^{-1}[\{k\}]$ is either empty or an interval contained in $D$, for every $k\in\omega$) and $(\w{b}_n)_{n\in h[D]}\subseteq X\setminus\{0\}$ such that:
\[
S_n(\hat{\w{u}})(\w{x})-S_n(\hat{\w{a}})(\w{x})=
\begin{cases}
    0, & \text{if }n\in\omega\setminus D,\\
    \w{u}_{h(n)}^\star(\w{x})\w{b}_{h(n)}, & \text{if }n\in D,
\end{cases}
\]
for all $n\in \omega$ and $\w{x}\in X$. In particular, if $X$ is a Banach space and $\hat{\w{u}}$ is a $\Fin$-Schauder basis of $X$, then for a given ideal $\I$, $(S_n(\hat{\w{a}})(\w{x}))$ is $\I$-convergent to $\w{x}$ if and only if $(\w{u}_{h(n)}^\star(\w{x})\w{b}_{h(n)})_{n\in D}$ is $\I\restriction D$-convergent to zero in $X$.
\end{definition}

However the above definition may be considered technical, we decided to restrict our considerations to such ideal Schauder bases for two reasons: all our examples are of this form and this special form enabled us to classify all such ideal Schauder bases in standard Banach spaces.

Next result, in some sense, allows us to consider only a special class of Banach spaces.

\begin{theorem}
If $\hat{\w{c}}$ is simple over some normed $\Fin$-Schauder basis $\hat{\w{u}}$ (that is, $\|\w{u}_n\|=1$ for each $n$) of some Banach space $Y$ over $\mathbb{R}$, then $\CR(Y,\hat{\w{c}})=\CR(X,\hat{\w{a}})$ for some Banach space $\ell_1\subseteq X\subseteq c_0$ (equipped in coordinate-wise addition and multiplication by scalars) and some $\hat{\w{a}}$ simple over $\hat{\w{e}}$. Moreover:
\begin{itemize}
    \item[(i)] $\hat{\w{u}}$ is unconditional in $Y$ if and only if $\hat{\w{e}}$ is unconditional in $X$,
    \item[(ii)] the topology on $X$ is stronger than the topology inherited from $c_0$,
    \item[(iii)] the topology on $X$ is weaker than $\ell_1$ topology on $X$ (understood as topology generated by sets of the form $B((x_n),r)=\{(y_n)\in X : \sum_{n\in\omega} |x_n-y_n|<r\}$).
\end{itemize}
\end{theorem}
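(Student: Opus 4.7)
The plan is to identify $Y$ with a sequence space $X$ via the coordinate map associated with $\hat{\w{u}}$, and to transfer $\hat{\w{c}}$ and $\hat{\w{u}}$ to $\hat{\w{a}}$ and $\hat{\w{e}}$ through this identification. Concretely, I would consider the linear map $\phi:Y\to\IK^\omega$ given by $\phi(\w{y})=(\w{u}_n^\star(\w{y}))_{n\in\omega}$, let $X=\phi[Y]$ equipped with coordinate-wise operations, and define a norm on $X$ by $\|\phi(\w{y})\|_X=\|\w{y}\|_Y$. Then $\phi:Y\to X$ is an isometric isomorphism sending $\w{u}_n$ to $\w{e}_n$, and the fact that $\hat{\w{u}}$ is a $\Fin$-Schauder basis of $Y$ translates immediately to the statement that $\hat{\w{e}}$ is a $\Fin$-Schauder basis of $X$.

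Next, I would check the two containments. For $X\subseteq c_0$: since $\sum_n x_n\w{u}_n$ converges in $Y$ for each $(x_n)\in X$ and $\|\w{u}_n\|=1$, the norms $\|x_n\w{u}_n\|_Y=|x_n|$ tend to $0$. For $\ell_1\subseteq X$: if $(x_n)\in\ell_1$, then $\sum_n x_n\w{u}_n$ converges absolutely in $Y$, so $(x_n)\in X$ with $\|(x_n)\|_X\le\sum_n|x_n|$, which simultaneously gives (iii). For (ii), I would invoke the standard fact that the basis constant $K$ of $\hat{\w{u}}$ bounds all coordinate functionals via $\|\w{u}_n^\star\|\le 2K/\|\w{u}_n\|=2K$, hence $\|(x_n)\|_\infty\le 2K\|(x_n)\|_X$.

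For the main equality $\CR(Y,\hat{\w{c}})=\CR(X,\hat{\w{a}})$, I would set $\w{a}_n=\phi(\w{c}_n)$ and $\w{a}_n^\star=\w{c}_n^\star\circ\phi^{-1}$, so that $\w{a}_n^\star(\w{a}_m)=\delta_{nm}$ and $S_n(\hat{\w{a}})(\phi(\w{y}))=\phi(S_n(\hat{\w{c}})(\w{y}))$ and similarly $S_n(\hat{\w{e}})(\phi(\w{y}))=\phi(S_n(\hat{\w{u}})(\w{y}))$. Using that $\w{u}_{h(n)}^\star(\w{y})$ equals the $h(n)$-th coordinate of $\phi(\w{y})$, i.e. $\w{e}_{h(n)}^\star(\phi(\w{y}))$, the simple form of $\hat{\w{c}}$ over $\hat{\w{u}}$ transfers verbatim into a simple form of $\hat{\w{a}}$ over $\hat{\w{e}}$ with the same $D$, the same $h$, and $\tilde{\w{b}}_k=\phi(\w{b}_k)\in X\setminus\{0\}$ (nonzero because $\phi$ is a linear isomorphism). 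Since $\phi$ is an isometry, $\|\w{y}-S_n(\hat{\w{c}})(\w{y})\|_Y=\|\phi(\w{y})-S_n(\hat{\w{a}})(\phi(\w{y}))\|_X$, so the two bases converge on the same sets of indices and therefore share the critical ideal.

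Finally, (i) is automatic: an isometric isomorphism carrying one basis to another preserves the unconditionality constant, so $\hat{\w{u}}$ is unconditional in $Y$ iff $\hat{\w{e}}$ is unconditional in $X$. I do not anticipate a serious obstacle here; the only point that requires a little care is verifying that the candidate $\hat{\w{a}}$ really is simple over $\hat{\w{e}}$ with the \emph{same} $D$ and $h$, which reduces to a line of algebra using the identity $\w{u}_n^\star(\w{y})=\w{e}_n^\star(\phi(\w{y}))$.
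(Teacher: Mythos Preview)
Your proposal is correct and follows essentially the same approach as the paper: the paper defines $X=\{\w{x}\in\mathbb{R}^\omega:\sum_n x_n\w{u}_n\in Y\}$ with $\|\w{x}\|_X=\|\sum_n x_n\w{u}_n\|_Y$, which is exactly your $\phi[Y]$ with the transported norm, and then transfers the simple structure along this isometry with the same $D$ and $h$. Your write-up is in fact slightly more careful than the paper's (e.g., you correctly write $\tilde{\w{b}}_k=\phi(\w{b}_k)$ rather than ``the same $(\w{b}_n)$'', and you spell out the $c_0$ and $\ell_1$ inclusions explicitly).
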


\begin{proof}
If $Y$ is an arbitrary Banach space over $\mathbb{R}$ and $\hat{\w{u}}$ is its $\Fin$-Schauder basis, then consider: 
\[
X=\left\{\w{x}\in\mathbb{R}^\omega: \sum_{n\in\omega}x_n\w{u}_n\in Y\right\}
\]
normed by: 
\[
\|\w{x}\|_X=\left\|\sum_{n\in\omega}x_n\w{u}_n\right\|_Y,
\]
for all $\w{x}\in X$. Then $X$ is a Banach space and $\ell_1\subseteq X\subseteq c_0$.

If $\hat{\w{c}}$ is simple over $\hat{\w{u}}$, then $\CR(Y,\hat{\w{c}})=\CR(X,\hat{\w{a}})$, where $\hat{\w{a}}$ is simple over $\hat{\w{e}}$ with the same witnessing $D$, $h$ and $(\w{b}_n)$ as for $\hat{\w{c}}$ and $\hat{\w{u}}$.

Item (i) follows from the fact that $X$ and $Y$ are isometric (by the isometry that moves $\hat{\w{e}}$ to $\hat{\w{u}}$). To show item (ii), observe that for any $\w{x}_n, \w{x}\in X$ and any $k\in \IN$ we have:
\begin{align*}
    |\w{u}^\star_{k+1}(\w{x}_n-\w{x})| & =  \|\w{u}^\star_{k+1}(\w{x}_n-\w{x})\w{u}_{k+1}\|_X = \|S_{k+1}(\hat{\w{u}}) (\w{x}_{n}-\w{x})- S_{k}(\hat{\w{u}}) (\w{x}_{n}-\w{x})\|_X=\cr
    & = \|(S_{k+1}-S_k)(\hat{\w{u}}) (\w{x}_{n}-\w{x})\|_X \leq 2K \|\w{x}_{n}-\w{x}\|_X,
\end{align*}
where $K$ is the basis constant of $\hat{\w{u}}$ (see \cite[Proposition 1.1.4 and Definition 1.1.5]{AK}). The final claim is clear.
\end{proof}

\section{Some lemmas}

\begin{lemma}
\label{general-lem1}
Let $\hat{\w{a}}$ be simple over a $\Fin$-Schauder basis $\hat{\w{u}}$ of a Banach space $X$, and let $D$, $h$ and $(\w{b}_n)_{n\in\omega}$ be as in Definition \ref{def-simple}. Then $\CR(X,\hat{\w{a}})$ is the ideal on $\omega$ generated by $\Fin$ and all sets of the form
    \[
    A_{\w{x}}=\left\{n\in D: |\w{u}^\star_{h(n)}(\w{x})|\geq\frac{1}{\|\w{b}_{h(n)}\|_X}\right\},
    \]
for $\w{x}\in X$.
\end{lemma}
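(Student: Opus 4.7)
The plan is to use the standard characterization of $\CR(X,\hat{\w{a}})$ as the smallest ideal containing every set $E_{\w{x},\varepsilon}=\{n\in\omega : \|\w{x}-S_n(\hat{\w{a}})(\w{x})\|_X\geq\varepsilon\}$ with $\w{x}\in X$ and $\varepsilon>0$, and then verify the two natural inclusions. The key tool is the identity supplied by Definition~\ref{def-simple}: for $n\in D$ one has
\[
\w{x}-S_n(\hat{\w{a}})(\w{x}) = \bigl(\w{x}-S_n(\hat{\w{u}})(\w{x})\bigr) + \w{u}^\star_{h(n)}(\w{x})\w{b}_{h(n)},
\]
while for $n\in\omega\setminus D$ the residuals of $\hat{\w{a}}$ and $\hat{\w{u}}$ coincide. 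Since $\hat{\w{u}}$ is a $\Fin$-Schauder basis of $X$, the quantity $\|\w{x}-S_n(\hat{\w{u}})(\w{x})\|_X$ tends to $0$ as $n\to\infty$, so for every $\delta>0$ only finitely many $n$ violate $\|\w{x}-S_n(\hat{\w{u}})(\w{x})\|_X<\delta$.

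To show $A_{\w{x}}\in\CR(X,\hat{\w{a}})$, I would fix $\w{x}\in X$ and pick $N$ with $\|\w{x}-S_n(\hat{\w{u}})(\w{x})\|_X<1/2$ for all $n\geq N$. For $n\in A_{\w{x}}$ with $n\geq N$ (so automatically $n\in D$), the reverse triangle inequality applied to the identity above yields
\[
\|\w{x}-S_n(\hat{\w{a}})(\w{x})\|_X \geq \|\w{u}^\star_{h(n)}(\w{x})\w{b}_{h(n)}\|_X - \tfrac{1}{2} \geq 1-\tfrac{1}{2} = \tfrac{1}{2},
\]
where the last inequality uses the defining bound $|\w{u}^\star_{h(n)}(\w{x})|\geq 1/\|\w{b}_{h(n)}\|_X$. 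Thus $A_{\w{x}}\setminus N\subseteq E_{\w{x},1/2}\in\CR(X,\hat{\w{a}})$, and because $\CR(X,\hat{\w{a}})$ is an ideal containing $\Fin$, it contains $A_{\w{x}}$ as well.

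For the converse, I would take any generator $E_{\w{x},\varepsilon}$ and include it, modulo a finite set, in some $A_{\w{y}}$. Choose $N$ with $\|\w{x}-S_n(\hat{\w{u}})(\w{x})\|_X<\varepsilon/2$ for $n\geq N$; then for $n\geq N$ in $\omega\setminus D$ the $\hat{\w{a}}$-residual is strictly below $\varepsilon$, while for $n\in D\cap E_{\w{x},\varepsilon}$ with $n\geq N$ the triangle inequality gives $\|\w{u}^\star_{h(n)}(\w{x})\w{b}_{h(n)}\|_X\geq\varepsilon/2$, i.e.\ $|\w{u}^\star_{h(n)}((2/\varepsilon)\w{x})|\geq 1/\|\w{b}_{h(n)}\|_X$ by linearity of $\w{u}^\star_{h(n)}$, which is precisely the condition $n\in A_{(2/\varepsilon)\w{x}}$. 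Hence $E_{\w{x},\varepsilon}\subseteq N\cup A_{(2/\varepsilon)\w{x}}$, completing the argument. I do not foresee a genuine obstacle: the whole proof is bookkeeping around the simple identity above, with the only mildly delicate point being the rescaling $\w{x}\mapsto(2/\varepsilon)\w{x}$ used to reduce an arbitrary threshold $\varepsilon$ to the normalised threshold $1/\|\w{b}_{h(n)}\|_X$ present in the definition of $A_{\w{x}}$; the assumption $\w{b}_{h(n)}\neq 0$ from Definition~\ref{def-simple} ensures that this normalisation is well defined throughout.
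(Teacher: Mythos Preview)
Your proof is correct and follows the same strategy as the paper's: both identify $\CR(X,\hat{\w{a}})$ with the ideal generated by $\Fin$ and the sets $A_{\w{x}}$ via the rescaling trick $\w{x}\mapsto c\,\w{x}$ to pass between an arbitrary threshold $\varepsilon$ and the normalised one. The only cosmetic difference is that the paper invokes the equivalence stated at the end of Definition~\ref{def-simple} (that $S_n(\hat{\w{a}})(\w{x})\to_\I\w{x}$ iff $\w{u}^\star_{h(n)}(\w{x})\w{b}_{h(n)}\to_{\I\restriction D}0$) to obtain the exact equality $\{n\in D:\|\w{u}^\star_{h(n)}(\w{x})\w{b}_{h(n)}\|\geq\varepsilon\}=A_{\w{x}/\varepsilon}$, thereby avoiding your $\varepsilon/2$ triangle-inequality bookkeeping and the factor $2$ in the rescaling; substantively the arguments are the same.
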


\begin{proof}
Let $\hat{\J}(X,\hat{\w{u}},\hat{\w{a}})$ be the ideal on $\omega$ generated by $\Fin$ and all sets of the form $A_{\w{x}}$, for $\w{x}\in X$. We need to show that the following are equivalent for any ideal $\I$ on $\omega$:
\begin{itemize}
    \item[(a)] $\hat{\w{a}}$ is an $\I$-Schauder base of $X$;
    \item[(b)] $\hat{\J}(X,\hat{\w{u}},\hat{\w{a}})\subseteq\I$.
\end{itemize}

(a)$\implies$(b): Assume that $\hat{\J}(X,\hat{\w{u}},\hat{\w{a}})\not\subseteq\I$. Then there exists an $A\in\hat{\J}(X,\hat{\w{u}},\hat{\w{a}})\setminus\I$. Hence, there are $F\in\Fin$, $k\in\omega$ and $\w{x}_0,\ldots,\w{x}_k\in X$ such that $A\subseteq F\cup A_{\w{x}_0}\cup\ldots\cup A_{\w{x}_k}$. Since $A\notin\I$, we can find $j\leq k$ such that $A_{\w{x}_j}\notin\I$. Observe that $(\w{u}_{h(n)}^\star(\w{x}_j)\w{b}_{h(n)})_{n\in D}$ is not $\I\restriction D$-convergent to zero, as:
\[
\{n\in D: \|\w{u}_{h(n)}^\star(\w{x}_j)\w{b}_{h(n)}\|_X\geq 1\}=A_{\w{x}_j}\notin\I.
\]
Thus, $\hat{\w{a}}$ is not an $\I$-Schauder basis of $X$. 

(b)$\implies$(a): Assume that $\hat{\J}(X,\hat{\w{u}},\hat{\w{a}})\subseteq\I$ and fix any $\w{x}\in X$ and $\varepsilon>0$. Then $\frac{\w{x}}{\varepsilon}\in X$, so $A_{\frac{\w{x}}{\varepsilon}}\in\hat{\J}(X,\hat{\w{u}},\hat{\w{a}})\subseteq\I$. To finish the proof, note that:
\[
\{n\in D: \|\w{u}_{h(n)}^\star(\w{x})\w{b}_{h(n)}\|_X\geq \varepsilon\}=\left\{n\in D: \left|\w{u}_{h(n)}^\star\left(\frac{\w{x}}{\varepsilon}\right)\right|\geq \frac{1}{\|\w{b}_{h(n)}\|_X}\right\}=A_{\frac{\w{x}}{\varepsilon}}.
\]
\end{proof}

\begin{remark}
\label{remark-general-lem1}
Note that the above Lemma holds in all normed spaces. We will use this fact in Section \ref{S:normed}.
\end{remark}

\begin{lemma}
\label{general-lem2}
Let $\hat{\w{u}}$ be an unconditional $\Fin$-Schauder basis of some Banach space $X$. Assume that $\hat{\w{a}}$ is simple over $\hat{\w{u}}$ and $D$, $h$ and $(\w{b}_n)_{n\in\omega}$ are as in Definition \ref{def-simple}. Then:
\[
\CR(X,\hat{\w{a}})=\left\{A\subseteq\omega: A\setminus D\in\Fin\text{ and }\sum_{k\in h[A\cap D]}\frac{\w{u}_{k}}{\|\w{b}_{k}\|_X}\in X\right\}.
\]
\end{lemma}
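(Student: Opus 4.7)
The plan is to denote the right-hand side by $\J$ and prove $\J=\CR(X,\hat{\w{a}})$ by exploiting the characterization from Lemma \ref{general-lem1}, namely that $\CR(X,\hat{\w{a}})$ is generated by $\Fin$ together with the sets $A_{\w{x}}=\{n\in D: |\w{u}^\star_{h(n)}(\w{x})|\geq 1/\|\w{b}_{h(n)}\|_X\}$ for $\w{x}\in X$. A preliminary step is to check that $\J$ is indeed an ideal: closure under subsets and finite unions reduces, via unconditionality of $\hat{\w{u}}$, to Lemma \ref{lem-monotonicznosc}, since if one set of indices is contained in another the associated scalar coefficient sequence $(\mathbf{1}_{k\in h[A\cap D]}/\|\w{b}_k\|_X)_k$ is coordinate-wise dominated by the one for the larger set, and the union case follows from dominating the characteristic function of $h[(A\cup B)\cap D]$ by the sum of the two.

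For the inclusion $\CR(X,\hat{\w{a}})\subseteq\J$, since $\J$ is an ideal it suffices to show that every generator $A_{\w{x}}$ lies in $\J$. Evidently $A_{\w{x}}\setminus D=\emptyset\in\Fin$, so I only need $\sum_{k\in h[A_{\w{x}}]}\w{u}_k/\|\w{b}_k\|_X\in X$. Setting $c_k=1/\|\w{b}_k\|_X$ for $k\in h[A_{\w{x}}]$ and $c_k=0$ otherwise, the defining property of $A_{\w{x}}$ gives $|c_k|\leq|\w{u}^\star_k(\w{x})|$ for every $k$, and since $\sum_k\w{u}^\star_k(\w{x})\w{u}_k=\w{x}\in X$, Lemma \ref{lem-monotonicznosc} yields the required convergence.

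For $\J\subseteq\CR(X,\hat{\w{a}})$, I would take $A\in\J$ and set $\w{x}_A=\sum_{k\in h[A\cap D]}\w{u}_k/\|\w{b}_k\|_X\in X$. By unconditionality of $\hat{\w{u}}$ and uniqueness of basis coefficients, $\w{u}^\star_k(\w{x}_A)=1/\|\w{b}_k\|_X$ whenever $k\in h[A\cap D]$. Consequently, for any $n\in A\cap D$ we have $h(n)\in h[A\cap D]$ and thus $|\w{u}^\star_{h(n)}(\w{x}_A)|=1/\|\w{b}_{h(n)}\|_X$, which places $n$ in $A_{\w{x}_A}$. This shows $A\cap D\subseteq A_{\w{x}_A}$, hence $A\subseteq(A\setminus D)\cup A_{\w{x}_A}$ with $A\setminus D\in\Fin$, witnessing $A\in\CR(X,\hat{\w{a}})$.

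The main technical point is the interplay between the interval-to-one map $h$ and the basis coefficients of $\w{x}_A$: one has to be certain that $\w{u}^\star_k(\w{x}_A)$ really extracts the coefficient $1/\|\w{b}_k\|_X$ from the (possibly unconditionally, not absolutely) convergent defining sum of $\w{x}_A$. This is exactly the uniqueness of coefficients for $\Fin$-Schauder bases combined with continuity of the $\w{u}^\star_k$, which is standard and available since $\hat{\w{u}}$ is assumed to be a $\Fin$-Schauder basis. Everything else is bookkeeping around how $h$ sends $A\cap D$ into $\omega$.
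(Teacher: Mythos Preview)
Your proof is correct and follows the same overall strategy as the paper: both directions go through Lemma~\ref{general-lem1} and rely on Lemma~\ref{lem-monotonicznosc}. The inclusion $\J\subseteq\CR(X,\hat{\w a})$ is handled identically.

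For the harder inclusion $\CR(X,\hat{\w a})\subseteq\J$ there is a small but genuine organizational difference. The paper takes an arbitrary $A\in\CR(X,\hat{\w a})$, writes $A\subseteq F\cup A_{\w x_0}\cup\ldots\cup A_{\w x_l}$, partitions $h[A\cap D]$ into disjoint pieces $B_i\cap h[A\cap D]$, applies Lemma~\ref{lem-monotonicznosc} twice on each piece, and then sums. You instead first check that $\J$ is an ideal (a short application of Lemma~\ref{lem-monotonicznosc}) and then verify only that each single generator $A_{\w x}$ lies in $\J$, via the one-line comparison $|c_k|\leq|\w u^\star_k(\w x)|$. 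This is a legitimate simplification: it absorbs the paper's partition-and-sum argument into the routine verification that $\J$ is closed under subsets and finite unions, and it handles $A_{\w x}\in\J$ with a single use of Lemma~\ref{lem-monotonicznosc} rather than two. Nothing is lost, and the bookkeeping is lighter.
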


\begin{proof}
By Lemma \ref{general-lem1}, $\CR(X,\hat{\w{a}})$ is the ideal on $\omega$ generated by $\Fin$ and all sets of the form
    \[
    A_{\w{x}}=\left\{n\in D: |\w{u}^\star_{h(n)}(\w{x})|\geq\frac{1}{\|\w{b}_{h(n)}\|_X}\right\},
    \]
for $\w{x}\in X$. Let 
\[
\J(X,\hat{\w{u}},\hat{\w{a}})=\left\{A\subseteq\omega: A\setminus D\in\Fin\text{ and }\sum_{k\in h[A\cap D]}\frac{\w{u}_{k}}{\|\w{b}_{k}\|_X}\in X\right\}.
\]

The inclusion $\J(X,\hat{\w{u}},\hat{\w{a}})\subseteq\CR(X,\hat{\w{a}})$ is obvious, as $A\in \J(X,\hat{\w{u}},\hat{\w{a}})$ means that $A\setminus D\in\Fin\subseteq\CR(X,\hat{\w{a}})$ and $\w{x}=\sum_{k\in h[A\cap D]}\frac{\w{u}_{k}}{\|\w{b}_{k}\|_X}\in X$, which gives us $A\cap D\subseteq A_{\w{x}}\in \CR(X,\hat{\w{a}})$.

Now we show $\J(X,\hat{\w{u}},\hat{\w{a}})\supseteq\CR(X,\hat{\w{a}})$. It is obvious that $\CR(X,\hat{\w{a}})\restriction\omega\setminus D=\Fin\restriction\omega\setminus D\subseteq \J(X,\hat{\w{u}},\hat{\w{a}})$. 

Let $A\in \CR(X,\hat{\w{a}})\restriction D$. Then there are $l\in\omega$ and $\w{x}_0,\ldots,\w{x}_l\in X$ such that $A\subseteq A_{\w{x}_0}\cup\ldots\cup A_{\w{x}_l}$. Define $B_0=h[A_{\w{x}_0}]$ and $B_i=h[A_{\w{x}_i}]\setminus\bigcup_{j<i}h[A_{\w{x}_j}]$ for $i=1,\ldots,l$. Note that for each $i\leq l$, since $\w{x}_i\in X$, we have:
\[
\w{y}_i=\sum_{k\in B_i\cap h[A\cap D]}\w{u}_{k}^\star(\w{x}_i)\w{u}_{k}\in X
\]
(by Lemma \ref{lem-monotonicznosc}). Hence, also $\w{z}_i=\sum_{k\in B_i\cap h[A\cap D]}\frac{\w{u}_{k}}{\|\w{b}_{k}\|}\in X$ (again, by Lemma \ref{lem-monotonicznosc}). Finally, since the sets $B_i$ are pairwise disjoint and $h[A\cap D]=\bigcup_{i\leq l}h[A\cap D]\cap B_i$, we get $\sum_{k\in h[A\cap D]}\frac{\w{u}_{k}}{\|\w{b}_{k}\|}\in X$.
\end{proof}

Next example shows that in Lemma \ref{general-lem2} the assumption about unconditionality of $\hat{\w{u}}$ cannot be dropped. 

\begin{example}
Consider $\hat{\w{u}}$ given by $\w{u}_n = \sum_{i=0}^n \w{e}_n $ and $\w{u}_n^\star=\w{e}_n^\star - \w{e}_{n+1}^\star$, for all $n\in\omega$, which is a non-unconditional $\Fin$-Schauder basis of $c_0$ (see \cite[Example 3.1.2]{AK}).

Let $\hat{\w{a}}$ be given by $\w{a}_n = \sum_{i=0}^n \w{u}_n $ and $\w{a}_n^\star=\w{u}_n^\star - \w{u}_{n+1}^\star$, for all $n\in\omega$.

Observe that:
\[
S_n(\hat{\w{a}})(\w{x})=\sum_{i=0}^n (\w{u}_i^\star-\w{u}_{i+1}^\star) (\w{x})\sum_{k=0}^i \w{u}_k = \sum_{i=0}^n \w{u}_i^\star (\w{x}) \w{u}_i - \w{u}_{n+1}^\star(\w{x}) \sum_{i=0}^n \w{u}_i=S_n(\hat{\w{u}})(\w{x})-\w{u}_{n+1}^\star(\w{x}) \sum_{i=0}^n \w{u}_i,
\]
for any $\w{x}\in c_0$. Thus, 
\[
S_n(\hat{\w{u}})(\w{x})-S_n(\hat{\w{a}})(\w{x})=\w{u}_{n+1}^\star(\w{x}) \sum_{i=0}^n \w{u}_i
\]
and $\hat{\w{a}}$ is simple over $\hat{\w{u}}$ (as witnessed by $D=\omega$, $h(n)=n+1$ and $\w{b}_{n+1}=\sum_{i=0}^n \w{u}_i$).

Note that: 
\[
\omega\notin\left\{A\subseteq\omega: A\setminus D\in\Fin\text{ and }\sum_{k\in h[A\cap D]}\frac{\w{u}_{k}}{\|\w{b}_{k}\|_{c_0}}\in c_0\right\}.
\]
Indeed, $\w{x}=\sum_{k\in h[\omega]}\frac{\w{u}_{k}}{\|\w{b}_k\|_{c_0}}=\sum_{n\in\omega}\frac{\w{u}_{n+1}}{n+1}\notin c_0$, since $\w{e}_0(\w{x})=\sum_{n\in\omega}\frac{1}{n+1}=\infty$.

We will show that $\omega\in\CR(c_0,\hat{\w{a}})$. Consider $\w{x}=\sum_{n\in\omega}\frac{\w{e}_{2n}}{n+1}\in c_0$ and $\w{y}=\sum_{n\in\omega}\frac{\w{e}_{2n+1}}{n+1}\in c_0$. We have:
\[
A_{\w{x}}=\left\{n\in\omega: |\w{u}^\star_{n+1}(\w{x})|\geq\frac{1}{\|\w{b}_{n+1}\|_{c_0}}\right\}=\left\{n\in\omega: |\w{e}_{n+1}^\star(\w{x}) - \w{e}_{n+2}^\star(\w{x})|\geq\frac{1}{n+1}\right\}.
\]
Note that if $n\in\{2k+1:k\in\omega\}$, i.e., $n=2m+1$ for some $m\in\omega$, then 
\[
|\w{e}_{n+1}^\star(\w{x}) - \w{e}_{n+2}^\star(\w{x})|=|\w{e}_{2m+2}^\star(\w{x}) - \w{e}_{2m+3}^\star(\w{x})|=\w{e}_{2m+2}^\star(\w{x})=\frac{1}{m+2}\geq \frac{1}{n+1}.
\]
This shows that $\{2k+1:k\in\omega\}\subseteq A_{\w{x}}$.

Similarly one can show that $\{2k:k\in\omega\}\subseteq A_{\w{y}}$. Hence, applying Lemma \ref{general-lem1}, $\omega=\{2k:k\in\omega\}\cup\{2k+1:k\in\omega\}\in\CR(c_0,\hat{\w{a}})$.
\end{example}

\begin{proposition}
\label{general-P-ideal}
If $\hat{\w{u}}$ is an unconditional $\Fin$-Schauder basis of some Banach space $X$ and $\hat{\w{a}}$ is simple over $\hat{\w{u}}$, then $\CR(X,\hat{\w{a}})$ is a P-ideal.
\end{proposition}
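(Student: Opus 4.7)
The plan is to use Lemma \ref{general-lem2} and reduce the question to showing that the auxiliary ideal $\J := \{B \subseteq \omega : \sum_{k \in B} \frac{\w{u}_k}{\|\w{b}_k\|_X} \in X\}$ is a P-ideal on $\omega$. Indeed, Lemma \ref{general-lem2} gives $\CR(X,\hat{\w{a}}) = \{A \subseteq \omega : A \setminus D \in \Fin \text{ and } h[A \cap D] \in \J\}$, so once $\J$ is a P-ideal, the property for $\CR(X,\hat{\w{a}})$ follows by transferring through $h$.

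First I would verify that $\J$ is a P-ideal. Since $\hat{\w{u}}$ is unconditional, each $\sum_{k \in B_n} \frac{\w{u}_k}{\|\w{b}_k\|_X}$ converges unconditionally, so by the unconditional Cauchy criterion there exists a threshold $N_n$ such that $\|\sum_{k \in F} \frac{\w{u}_k}{\|\w{b}_k\|_X}\|_X < 2^{-n}$ for every finite $F \subseteq B_n \cap [N_n, \infty)$. Set $B := \bigcup_n (B_n \setminus N_n)$; clearly $B_n \setminus B \subseteq [0, N_n)$ is finite. To see $B \in \J$ I would show its series is also unconditionally Cauchy: given $\varepsilon>0$, pick $M$ with $\sum_{n>M} 2^{-n} < \varepsilon/2$, and then enlarge $N$ so that every finite $F \subseteq B_n \cap [N, \infty)$ with $n \leq M$ contributes less than $\varepsilon/(2(M+1))$. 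Any finite $F \subseteq B \cap [N,\infty)$ partitions as $F = \bigsqcup_n F_n$ with $F_n \subseteq B_n \setminus N_n$ (only finitely many nonempty), and combining the two bounds yields $\|\sum_{k \in F} \frac{\w{u}_k}{\|\w{b}_k\|_X}\|_X < \varepsilon$.

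To lift the P-property to $\CR(X, \hat{\w{a}})$, given $(A_n) \subseteq \CR(X, \hat{\w{a}})$ I set $B_n := h[A_n \cap D] \in \J$ and take $B \in \J$ witnessing the P-ideal property for $(B_n)$, arranging in addition that $B$ contain the at most one index $k$ for which $h^{-1}(\{k\})$ is an infinite interval (this is a finite enlargement that preserves $\J$-membership). Define $A := D \cap h^{-1}[B]$; then $A \setminus D = \emptyset$ and $h[A \cap D] \subseteq B \in \J$, so $A \in \CR(X,\hat{\w{a}})$. For each $n$, $A_n \setminus A \subseteq (A_n \setminus D) \cup (A_n \cap h^{-1}[B_n \setminus B])$; the first piece is finite by hypothesis, and by the arrangement of $B$ the second is a finite union of finite intervals, hence finite. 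The main obstacle is the verification that $B \in \J$ in the first step, which requires careful bookkeeping with the uniformity of unconditional tail estimates across infinitely many overlapping series; the interval-to-one subtlety is a minor technicality resolved by the finite enlargement of $B$.
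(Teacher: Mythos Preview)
Your proposal is correct and follows essentially the same approach as the paper: both invoke Lemma~\ref{general-lem2} and then use the unconditional Cauchy criterion to absorb countably many sets into one by chopping off initial segments so that the remaining tails have norms summing to less than $2^{-n}$. The only organizational difference is that you first verify the P-ideal property for the auxiliary ideal $\J$ on the range of $h$ and then pull back through $h^{-1}$ (handling the possible single infinite fiber explicitly), whereas the paper chops initial segments directly in the domain and works with $A=\bigcup_k((A_k\cap D)\setminus m_k)$; this is a cosmetic distinction rather than a genuinely different route.
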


\begin{proof}
From Lemma \ref{general-lem2} we know that:
\[
\CR(X,\hat{\w{a}})=\left\{A\subseteq\omega: A\setminus D\in\Fin\text{ and }\sum_{k\in h[A\cap D]}\frac{\w{u}_{k}}{\|\w{b}_{k}\|_X}\in X\right\},
\]
where $D$, $h$ and $(\w{b}_n)_{n\in\omega}$ are as in Definition \ref{def-simple}. 

Let $(A_k)\subseteq \CR(X,\hat{\w{a}})$. For each $k\in\omega$, since $\sum_{i\in h[A_k\cap D]}\frac{\w{u}_{i}}{\|\w{b}_{i}\|_X}\in X$ and $h$ is interval-to-one, there is $m_k\in\omega$ such that $\left\|\sum_{i\in h[(A_k\cap D)\setminus m_k]}\frac{\w{u}_{i}}{\|\w{b}_{i}\|_X}\right\|_X<\frac{1}{2^k}$. Define $A=\bigcup_{k\in\omega}((A_k\cap D)\setminus m_k)$. Then $A\subseteq D$ and $A_k\setminus A\in\Fin$, for each $k$. We need to show that $A\in\CR(X,\hat{\w{a}})$, i.e., $\sum_{i\in h[A]}\frac{\w{u}_{i}}{\|\w{b}_{i}\|_X}\in X$. 

Given any $\varepsilon>0$, find $j\in\omega\setminus\{0\}$ such that $\frac{1}{2^{j-1}}<\varepsilon$ and $m\in\omega$ such that 
\[
\sum_{k\leq j}\left\|\sum_{i\in h[A_k\cap D]\setminus m}\frac{\w{u}_{i}}{\|\w{b}_{i}\|_X}\right\|_X<\frac{1}{2^{j}}.
\]
Observe that
\begin{align*}
    \left\|\sum_{i\in h[A]\setminus m}\frac{\w{u}_{i}}{\|\w{b}_{i}\|_X}\right\|_X & \leq\sum_{k\leq j}\left\|\sum_{i\in h[A\cap A_k]\setminus m}\frac{\w{u}_{i}}{\|\w{b}_{i}\|_X}\right\|_X+\sum_{k>j}\left\|\sum_{i\in h[A\cap A_k]}\frac{\w{u}_{i}}{\|\w{b}_{i}\|_X}\right\|_X< \cr
    & <\frac{1}{2^{j}}+\sum_{k>j}\frac{1}{2^k}=\frac{1}{2^{j-1}}<\varepsilon.
\end{align*}
\end{proof}

\section{Banach spaces with a Schauder basis}

\begin{theorem}
\label{thm-distinguishing}
Let $X$ be any Banach space with a $\Fin$-Schauder basis $\hat{\w{u}}$. For any infinite $E\subseteq\omega$ there is $\hat{\w{a}}\subseteq X\times X^\star$ simple over $\hat{\w{u}}$ such that the following are equivalent for any ideal $\I$:
\begin{itemize}
    \item[(a)] $\hat{\w{a}}$ is an $\I$-Schauder basis of $X$;
    \item[(b)] $E\in\I$;
    \item[(c)] $\I_E\subseteq\I$, where $\I_E$ is the ideal generated by $\Fin\cup\{E\}$.
\end{itemize}
In particular, $\CR(X,\hat{\w{a}})=\I_E$.
\end{theorem}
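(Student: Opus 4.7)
The plan is to construct $\hat{\w{a}}$ explicitly as a rank-one perturbation of $\hat{\w{u}}$ concentrated at the indices of $E$, verify by direct computation that the resulting pair is simple over $\hat{\w{u}}$ and biorthogonal, and then use Lemma \ref{general-lem1} to read off $\CR(X,\hat{\w{a}})$. Once $\CR(X,\hat{\w{a}})=\I_E$ is established, the three-way equivalence in the statement follows immediately from the definition of the critical ideal and from the description $\I_E=\{F\cup A:F\in\Fin,\ A\subseteq E\}$.

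\textbf{The construction.} I set $D:=E$ and partition $E$ into its maximal runs (maximal intervals of consecutive integers in $E$), enumerated as $(R_j)$ with $n_j:=\min R_j$ and, when $R_j$ is bounded, $p_j:=\max R_j+1\notin E$. Let $h\colon E\to\omega$ be constant equal to $n_j$ on $R_j$; this is clearly interval-to-one. For every bounded run $R_j$ I put $\w{b}_{n_j}:=\beta_j\w{u}_{p_j}$ (for a parameter $\beta_j>0$ to be tuned) and declare
\[\w{a}_{n_j}:=\w{u}_{n_j}-\beta_j\w{u}_{p_j},\ \w{a}^\star_{n_j}:=\w{u}^\star_{n_j},\qquad \w{a}_{p_j}:=\w{u}_{p_j},\ \w{a}^\star_{p_j}:=\w{u}^\star_{p_j}+\beta_j\w{u}^\star_{n_j},\]
keeping $\w{a}_\ell:=\w{u}_\ell$ and $\w{a}^\star_\ell:=\w{u}^\star_\ell$ at every other index. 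If $\omega\setminus E$ is finite so that one run $T$ is unbounded, I modify the recipe on $T$ by taking $h$ to be the identity there and solving the rank-one constraint recursively through $\w{b}_n=\w{u}_n-\lambda_n\w{b}_{n-1}$ for $n\in T$, $n>\min T$, with the initial vector forced by biorthogonality to be
\[\w{b}_{\min T}:=\sum_{k\geq 1}\frac{(-1)^{k-1}}{\lambda_{\min T+1}\cdots\lambda_{\min T+k}}\w{u}_{\min T+k},\]
the $\lambda_n$'s being chosen large enough that the series converges in $X$ and $\|\w{b}_n\|_X$ grows at a prescribed rate. A direct case analysis on the type of each index (interior of a run, start $n_j$, end$+1$ index $p_j$, tail, or neither) verifies both biorthogonality $\w{a}^\star_n(\w{a}_m)=\delta_{nm}$ and the simple-over identity of Definition \ref{def-simple}; the cross terms vanish because distinct maximal runs are separated by at least one index outside $E$, so the indices $n_j$ and $p_j$ across different bounded runs are pairwise distinct.

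\textbf{Reading off $\CR$ and main obstacle.} By Lemma \ref{general-lem1}, $\CR(X,\hat{\w{a}})$ is the ideal generated by $\Fin$ and the sets $A_{\w{x}}=\{n\in E:|\w{u}^\star_{h(n)}(\w{x})|\geq\|\w{b}_{h(n)}\|_X^{-1}\}$, each contained in $E$, so $\CR(X,\hat{\w{a}})\subseteq\I_E$. For the reverse inclusion I fix once and for all $\w{x}_0=\sum_n\alpha_n\w{u}_n\in X$ with $\alpha_n:=2^{-n}/\|\w{u}_n\|_X$ (absolutely convergent) and tune $\beta_j:=(\alpha_{n_j}\|\w{u}_{p_j}\|_X)^{-1}$ on each bounded run, with the matching growth prescription on the $\lambda_n$ on the tail; then $A_{\w{x}_0}=E$, whence $E\in\CR(X,\hat{\w{a}})$ and $\I_E\subseteq\CR(X,\hat{\w{a}})$. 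The main obstacle is precisely the unbounded-tail case, which arises exactly when $E$ is cofinite (where $\I_E=\mathcal{P}(\omega)$): here the correction $\w{u}^\star_{\min T}(\w{x})\w{b}_{\min T}$ is never cancelled by a subsequent non-$E$ index, so the biorthogonality constraints along $T$ form an implicit infinite system whose unique solution is the specific alternating series written above, and its convergence in an abstract Banach space $X$ (with no a priori control of the norms $\|\w{u}_n\|_X$) must be arranged by choosing the $\lambda_n$'s sufficiently rapidly growing. The bounded-run case carries the main structural content of the argument; the tail case is then essentially technical bookkeeping on top of it.
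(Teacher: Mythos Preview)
Your argument is correct and follows the same strategy as the paper: decompose $E$ into maximal runs, attach a rank-one correction that persists precisely while the partial-sum index lies inside a run, and test against a fixed absolutely convergent vector $\sum_n\alpha_n\w{u}_n$ to exhibit $E\in\CR(X,\hat{\w{a}})$. The paper's correction vector at $n\in I_k$ is a block $\frac{2^{b_k}}{d_k}\sum_{i>n}^{\max I_k+1}\w{u}_i$ rather than your single vector $\beta_j\w{u}_{p_j}$, and the paper verifies (a)$\Leftrightarrow$(b) directly from the expression for $S_n(\hat{\w{u}})-S_n(\hat{\w{a}})$ instead of invoking Lemma~\ref{general-lem1}; these are cosmetic differences. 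If anything, your single-vector perturbation is cleaner: the vector $\w{b}_{h(n)}=\beta_j\w{u}_{p_j}$ is genuinely constant across each run $R_j$, so the ``simple over'' identity of Definition~\ref{def-simple} is satisfied on the nose.

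One comment: you have the emphasis backwards. The cofinite-$E$ case is not the main obstacle but the degenerate one, since then $\I_E=\mathcal P(\omega)$ and conditions (a)--(c) all reduce to $\I=\mathcal P(\omega)$; the paper's own proof in fact tacitly works only when $\omega\setminus E$ is infinite (otherwise one cannot write $E=\bigcup_k I_k$ with finite intervals and $\max I_k+1<\min I_{k+1}$). Your recursive tail construction does handle this case and the biorthogonality checks out, but the substantive content of the theorem lies entirely in the bounded-run analysis you describe first.
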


\begin{proof}
Fix a $\Fin$-Schauder basis $\hat{\w{u}}$ of $X$. Without loss of generality we may assume that $\hat{\w{u}}$ is normed, that is, $\|\w{u}_n\|_X=1$ for all $n\in\omega$. Let $(I_k)$ be a sequence of pairwise disjoint intervals in $\omega$ such that $E=\bigcup_k I_k$ and $\max I_k+1<\min I_{k+1}$. Denote $b_k=\min I_k$ and let 
\[
d_k=\min\left\{\left\|\sum_{i=j}^{\max I_k+1} \w{u}_{i}\right\|_X: b_k<j\leq \max I_k+1\right\},
\]
for all $k\in\omega$.

Define 
\[
    \w{a}_n =
    \begin{cases*}
    \w{u}_{n}+\frac{2^n}{d_k}\cdot\sum_{i=n+1}^{\max I_k+1} \w{u}_{i}     & if $n=b_k$ for some $k$, \\
    \w{u}_n & otherwise,  \\
    \end{cases*}
  \]
and
\[
    \w{a}_n^\star =
    \begin{cases*}
    \w{u}_n^\star & if  $n-1\notin E$  \\
    \w{u}_{n}^\star - \frac{2^{b_k}}{d_k}\cdot \w{u}_{b_k}^\star & if $n-1\in I_k$ for some $k$.
    \end{cases*}
  \]

Clearly, $\w{a}^\star_n(\w{a}_m)=\delta_{nm}$ for all $n,m\in\omega$.

The equivalence of items (b) and (c) is obvious. We will show that items (a) and (b) are equivalent.

Observe that:
\[
S_n(\hat{\w{u}})(\w{x})-S_n(\hat{\w{a}})(\w{x})=\begin{cases*}
    0, & if  $n\notin E$  \\
    \w{u}^\star_{b_k}(\w{x})\frac{2^{b_k}}{d_k}\sum_{i=j}^{\max I_k+1}\w{u}_i, & if $n=b_k+j$ for some $k\in\omega$ and $0\leq j<|I_k|$.
    \end{cases*}
\]
Hence, if $E\in\I$, then $\hat{\w{a}}$ is an $\I$-Schauder basis of $X$. 

We need to show that if $E\notin\I$, then $\hat{\w{a}}$ is not an $\I$-Schauder basis of $X$. Define $\w{x}=\sum_{n\in\omega}\frac{1}{2^n}\w{u}_n$ (note that $\w{x}\in X$, since $\|\sum_{n>k}\frac{1}{2^n}\w{u}_n\|_X\leq\frac{1}{2^k}$ for all $k\in\omega$). Then 
\[
\{n\in\omega: \|S_n(\hat{\w{u}})(\w{x})-S_n(\hat{\w{a}})(\w{x})\|_X\geq 1\}\supseteq E\notin\I,
\]
since given any $n\in E$, find $k\in\omega$ and $0\leq j<|I_k|$ such that $n=b_k+j\in I_k$, and observe that:
\[
\|S_n(\hat{\w{u}})(\w{x})-S_n(\hat{\w{a}})(\w{x})\|_X=\left\|\w{u}^\star_{b_k}(\w{x})\frac{2^{b_k}}{d_k}\sum_{i=j}^{\max I_k+1}\w{u}_i\right\|_X\geq |2^{b_k}\w{u}^\star_{b_k}(\w{x})|=1.
\]
\end{proof}

\begin{corollary}
Let $X$ be any Banach space with a $\Fin$-Schauder basis. The following are equivalent for any ideals $\I$ and $\J$:
\begin{itemize}
    \item[(a)] There is an $\I$-Schauder basis of $X$ which is not a $\J$-Schauder basis of $X$;
    \item[(b)] $\I\not\subseteq\J$.
\end{itemize}
In particular, if $\I\neq\Fin$, then there is an $\I$-Schauder basis, which is not a $\Fin$-Schauder basis.
\end{corollary}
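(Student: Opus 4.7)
The plan is to deduce the corollary directly from Theorem \ref{thm-distinguishing} together with the universal property of the critical ideal $\CR(X,\hat{\w{a}})$ recalled at the start of Section~2.2. The entire statement is essentially a repackaging of that theorem, so I do not expect any genuine obstacle — the only thing to keep track of is that, since every ideal contains $\Fin$, a witness set to $\I\not\subseteq\J$ is automatically infinite and may therefore be fed into Theorem \ref{thm-distinguishing}.

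For the direction (a)$\Rightarrow$(b), I would start from an $\hat{\w{a}}$ that is an $\I$-Schauder basis but not a $\J$-Schauder basis. By the defining property of $\CR(X,\hat{\w{a}})$, the first clause translates to $\CR(X,\hat{\w{a}})\subseteq\I$ and the negation of the second clause translates to $\CR(X,\hat{\w{a}})\not\subseteq\J$. Picking any $A\in\CR(X,\hat{\w{a}})\setminus\J$ then yields $A\in\I\setminus\J$, so $\I\not\subseteq\J$.

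For (b)$\Rightarrow$(a), I would fix some $E\in\I\setminus\J$; since $\J\supseteq\Fin$, such an $E$ is infinite, so Theorem \ref{thm-distinguishing} applies and produces $\hat{\w{a}}\subseteq X\times X^\star$ simple over $\hat{\w{u}}$ with $\CR(X,\hat{\w{a}})=\I_E$, where $\I_E$ is the ideal generated by $\Fin\cup\{E\}$. Because $E\in\I$, clearly $\I_E\subseteq\I$, so $\hat{\w{a}}$ is an $\I$-Schauder basis. On the other hand $E\in\I_E=\CR(X,\hat{\w{a}})$ but $E\notin\J$, so $\CR(X,\hat{\w{a}})\not\subseteq\J$, i.e.\ $\hat{\w{a}}$ fails to be a $\J$-Schauder basis.

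Finally, for the ``in particular'' clause I would note that $\I\neq\Fin$ means there is $A\in\I\setminus\Fin$, in particular $A\notin\Fin$, so $\I\not\subseteq\Fin$; applying the equivalence with $\J=\Fin$ gives the desired basis. No step involves calculation beyond invoking the previous theorem, and the only subtlety — that $E\in\I\setminus\J$ is necessarily infinite — is forced by the definition of an ideal.
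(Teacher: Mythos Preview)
Your proof is correct and follows essentially the same approach as the paper, which simply states that the corollary follows from Theorem \ref{thm-distinguishing}. Your write-up just spells out the details, including the observation that $E\in\I\setminus\J$ is infinite and the use of the defining property of $\CR(X,\hat{\w{a}})$.
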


\begin{proof}
It follows from Theorem \ref{thm-distinguishing}.
\end{proof}

\begin{corollary}
If $X$ is a Banach space with a $\Fin$-Schauder basis $\hat{\w{u}}$, then there is a family $\mathcal{F}\subseteq(X\times X^\star)^\omega$ of cardinality $2^\omega$ such that each $\hat{\w{a}}\in\mathcal{F}$ is simple over $\hat{\w{u}}$ and $\CR(X,\hat{\w{a}})\neq\CR(X,\hat{\w{b}})$, for all distinct $\hat{\w{a}},\hat{\w{b}}\in\mathcal{F}$. 
\end{corollary}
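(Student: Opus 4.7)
The plan is to invoke Theorem \ref{thm-distinguishing} repeatedly: for each infinite $E\subseteq\omega$ the theorem produces some $\hat{\w{a}}(E)$ simple over $\hat{\w{u}}$ with $\CR(X,\hat{\w{a}}(E))=\I_E$, where $\I_E$ is the ideal generated by $\Fin\cup\{E\}$. So the whole task reduces to choosing $2^\omega$ many infinite sets $E\subseteq\omega$ for which the ideals $\I_E$ are pairwise distinct.

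First I would observe that $\I_E=\{B\subseteq\omega : B\setminus E\in\Fin\}$, and from this description one reads off immediately that $\I_E=\I_F$ if and only if $E\triangle F$ is finite, i.e.\ $E=^* F$. Consequently any family $\{E_\alpha : \alpha<2^\omega\}$ of infinite subsets of $\omega$ that are pairwise not almost equal yields pairwise distinct ideals $\I_{E_\alpha}$.

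Next I would fix an almost disjoint family $\{E_\alpha : \alpha<2^\omega\}$ of infinite subsets of $\omega$ (such families exist by a classical construction, e.g.\ fix a bijection between $\omega$ and $\IQ$ and for each real $r$ take the range of a sequence of rationals converging to $r$; any two such sets have finite intersection). In particular no two members of this family are almost equal, so the corresponding critical ideals $\I_{E_\alpha}$ are pairwise distinct. Setting $\mathcal{F}=\{\hat{\w{a}}(E_\alpha) : \alpha<2^\omega\}$, each member of $\mathcal{F}$ is simple over $\hat{\w{u}}$ by construction, and for distinct $\alpha\neq\beta$ we have $\CR(X,\hat{\w{a}}(E_\alpha))=\I_{E_\alpha}\neq\I_{E_\beta}=\CR(X,\hat{\w{a}}(E_\beta))$. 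In particular the map $\alpha\mapsto\hat{\w{a}}(E_\alpha)$ is injective, so $|\mathcal{F}|=2^\omega$.

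There is really no hard step here once Theorem \ref{thm-distinguishing} is available; the only thing to verify carefully is the equivalence $\I_E=\I_F \iff E=^* F$, which is immediate from the explicit description of $\I_E$, plus the standard existence of an almost disjoint family of size continuum.
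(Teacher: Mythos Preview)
Your proof is correct and follows essentially the same route as the paper: both fix an almost disjoint family of size $2^\omega$ and apply Theorem~\ref{thm-distinguishing} to each member to obtain simple $\hat{\w{a}}$'s with pairwise distinct critical ideals $\I_E$. You simply spell out a couple of points the paper leaves implicit, namely the description $\I_E=\{B:B\setminus E\in\Fin\}$, the equivalence $\I_E=\I_F\iff E=^*F$, and a concrete construction of the almost disjoint family.
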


\begin{proof}
Let $\mathcal{A}\subseteq\mathcal{P}(\omega)$ be an almost disjoint family (i.e., $A\cap B\in\Fin$ for all distinct $A,B\in\mathcal{A}$) of cardinality $2^\omega$. For each $A\in\mathcal{A}$, apply Theorem \ref{thm-distinguishing} to get $\hat{\w{a}}_A\in(X\times X^\star)^\omega$ such that $\CR(X,\hat{\w{a}}_A)=\I_A$. Let $\mathcal{F}=\{\hat{\w{a}}_A: A\in\mathcal{A}\}$ and observe that $\I_A\neq\I_B$ whenever $A,B\in\mathcal{A}$ are distinct.
\end{proof}

\begin{remark}
Note that Theorem \ref{thm-distinguishing} implies that there is no single ideal $\I$ suitable for all ideal Schauder bases in such a way that for every $\hat{\w{a}}$ and $\J$, if $\hat{\w{a}}$ is an $\J$-Schauder basis, then it is also an $\I$-Schauder basis.
\end{remark}

\section{Spaces \texorpdfstring{$\ell_p$}{l }}

\subsection{Reexaming the already known example}

We start by investigating the example provided in \cite[Example 1]{CGK}.

\begin{proposition}
\label{prop-known-example}
Let $\w{a}_n = \sum_{i=0}^n \w{e}_n $ and $\w{a}_n^\star=\w{e}_n^\star - \w{e}_{n+1}^\star$ for all $n\in\omega$. Then $\hat{\w{a}}=(\w{a}_n,\w{a}^\star_n)$ is simple over $\hat{\w{e}}$ and $\CR(\ell_p,\hat{\w{a}})=\I_{1/n}$, for every $1\leq p<\infty$.
\end{proposition}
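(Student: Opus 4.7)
The plan is to apply Lemma \ref{general-lem2} directly, since $\hat{\w{e}}$ is an unconditional $\Fin$-Schauder basis of $\ell_p$ for every $1\leq p<\infty$.

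First I would verify that $\hat{\w{a}}$ is simple over $\hat{\w{e}}$ by performing exactly the same Abel-type telescoping as in the example following Lemma \ref{general-lem2}: expanding
\[
S_n(\hat{\w{a}})(\w{x})=\sum_{i=0}^{n}\bigl(\w{e}_i^\star-\w{e}_{i+1}^\star\bigr)(\w{x})\sum_{k=0}^{i}\w{e}_k
\]
and telescoping in $i$ gives $S_n(\hat{\w{a}})(\w{x})=S_n(\hat{\w{e}})(\w{x})-\w{e}_{n+1}^\star(\w{x})\sum_{i=0}^{n}\w{e}_i$. Consequently,
\[
S_n(\hat{\w{e}})(\w{x})-S_n(\hat{\w{a}})(\w{x})=\w{e}_{n+1}^\star(\w{x})\sum_{i=0}^{n}\w{e}_i,
\]
which matches Definition \ref{def-simple} with $D=\omega$, $h(n)=n+1$, and $\w{b}_{n+1}=\sum_{i=0}^{n}\w{e}_i$.

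Next I would invoke Lemma \ref{general-lem2}, which (using unconditionality of $\hat{\w{e}}$ in $\ell_p$) produces
\[
\CR(\ell_p,\hat{\w{a}})=\left\{A\subseteq\omega:\sum_{k\in h[A]}\frac{\w{e}_k}{\|\w{b}_k\|_{\ell_p}}\in\ell_p\right\}.
\]
A direct computation gives $\|\w{b}_{n+1}\|_{\ell_p}=(n+1)^{1/p}$, so the series inside the characterization equals $\sum_{n\in A}\frac{1}{(n+1)^{1/p}}\w{e}_{n+1}$. This vector lies in $\ell_p$ iff $\sum_{n\in A}\frac{1}{n+1}<\infty$, i.e.\ iff $A\in\I_{1/n}$, which finishes the identification.

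I do not expect a significant obstacle here: all the real work is absorbed by Lemma \ref{general-lem2}, and the only things requiring care are (i) performing the Abel summation so as to identify the correct witnesses $(D,h,(\w{b}_n))$ in Definition \ref{def-simple}, and (ii) computing $\|\w{b}_{n+1}\|_{\ell_p}=(n+1)^{1/p}$ — both routine. Observe that the same argument would \emph{fail} to give $\I_{1/n}$ for $p=\infty$ (the series stays in $c_0$ for every $A$), which is consistent with $\hat{\w{e}}$ not being a Schauder basis of $\ell_\infty$.
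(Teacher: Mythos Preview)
Your proposal is correct and follows essentially the same argument as the paper: the same Abel/telescoping computation to identify $D=\omega$, $h(n)=n+1$, $\w{b}_{n+1}=\sum_{i=0}^{n}\w{e}_i$, followed by Lemma \ref{general-lem2} and the norm computation $\|\w{b}_{n+1}\|_{\ell_p}=(n+1)^{1/p}$.
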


\begin{proof}
Observe that:
\[
S_n(\hat{\w{a}})(\w{x})=\sum_{i=0}^n (\w{e}_i^\star-\w{e}_{i+1}^\star) (\w{x})\sum_{k=0}^i \w{e}_k = \sum_{i=0}^n \w{e}_i^\star (\w{x}) \w{e}_i - \w{e}_{n+1}^\star(\w{x}) \sum_{i=0}^n \w{e}_i=S_n(\hat{\w{e}})(\w{x})-\w{e}_{n+1}^\star(\w{x}) \sum_{i=0}^n \w{e}_i,
\]
for any $\w{x}\in\mathbb{R}^\omega$. Thus:
\[
S_n(\hat{\w{e}})(\w{x})-S_n(\hat{\w{a}})(\w{x})=\w{e}_{n+1}^\star(\w{x}) \sum_{i=0}^n \w{e}_i
\]
and $\hat{\w{a}}$ is simple over $\hat{\w{e}}$ (as witnessed by $D=\omega$, $h(n)=n+1$ and $\w{b}_{n+1}=\sum_{i=0}^n \w{e}_i$). Hence, applying Lemma \ref{general-lem2}, we have:
\begin{align*}
\CR(\ell_p,\hat{\w{a}}) & =\left\{A\subseteq\omega: \sum_{n\in A}\frac{\w{e}_{n+1}}{\|\sum_{i=0}^n \w{e}_i\|_{\ell_p}}\in \ell_p\right\}=\left\{A\subseteq\omega: \sum_{n\in A}\frac{\w{e}_{n+1}}{(n+1)^{1/p}}\in \ell_p\right\}=\cr
& =\left\{A\subseteq\omega: \sum_{n\in A}\frac{1}{n+1}<\infty\right\}=\I_{1/n},  
\end{align*}
for every $1\leq p<\infty$.
\end{proof}

\begin{remark}
The sequence $\hat{\w{a}}$ from \cite[Example 1]{CGK} and Proposition \ref{prop-known-example} is a non-unconditional $\Fin$-Schauder basis of $c_0$ (see \cite[Example 3.1.2]{AK}).
\end{remark}

\subsection{Single basis with different critical ideals}

In Proposition \ref{prop-known-example}, we observed a single $\hat{\w{a}}$ being an ideal Schauder base of each $\ell_p$, for $1\leq p<\infty$, such that the critical ideal was the same for each $1\leq p<\infty$. In this Subsection we will construct $\hat{\w{a}}$ being an ideal Schauder base of every $\ell_p$, $1\leq p<\infty$, but with different critical ideals.

\begin{proposition}
\label{prop-different_critical}
There are a partition $(A_k)$ of $\omega$ into infinite sets and $\hat{\w{a}}$ such that 
\[
\CR(\ell_p,\hat{\w{a}})=\left\{A\subseteq\omega:A\cap A_0\in\Fin\text{ and }\sum_{k\in\omega\setminus\{0\}}\frac{|A\cap A_k|}{k^p}<\infty\right\},
\]
for each $1\leq p<\infty$. Moreover, if $1\leq p<q<\infty$, then $\CR(\ell_q,\hat{\w{a}})\not\subseteq\CR(\ell_p,\hat{\w{a}})$. In particular, the ideals $\CR(\ell_p,\hat{\w{a}})$ are summable and pairwise distinct.
\end{proposition}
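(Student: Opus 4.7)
The plan is to reduce the problem to Lemma \ref{general-lem2}. The key observation is that in Proposition \ref{prop-known-example} the choice $\w{b}_{n+1}=\sum_{i=0}^{n}\w{e}_i$ gives $\|\w{b}_{n+1}\|_{\ell_p}^{p}=n+1$, which is independent of $p$; this is exactly why that critical ideal did not distinguish the spaces $\ell_p$. To achieve a $p$-dependent critical ideal I will arrange that each $\w{b}_k$ is a scalar multiple of a single standard basis vector, so that $\|\w{b}_k\|_{\ell_p}$ itself is independent of $p$ while $\|\w{b}_k\|_{\ell_p}^{p}$ varies with $p$.

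Concretely, fix a partition $(S_k)_{k\geq 1}$ of $\omega$ into infinite sets and set $A_0=\{2m+1:m\in\omega\}$ and $A_k=\{2m:m\in S_k\}$ for $k\geq 1$. Let $c_m=k$ whenever $m\in S_k$, and define
\[
\w{a}_{2m}=\w{e}_{2m},\qquad \w{a}_{2m+1}=\w{e}_{2m+1}+c_m\w{e}_{2m},
\]
\[
\w{a}^\star_{2m}=\w{e}^\star_{2m}-c_m\w{e}^\star_{2m+1},\qquad \w{a}^\star_{2m+1}=\w{e}^\star_{2m+1}.
\]
A direct check gives $\w{a}^\star_n(\w{a}_{n'})=\delta_{nn'}$, and a pair-by-pair computation yields $S_{2m+1}(\hat{\w{a}})=S_{2m+1}(\hat{\w{e}})$ together with $S_{2m}(\hat{\w{e}})(\w{x})-S_{2m}(\hat{\w{a}})(\w{x})=c_m\w{e}^\star_{2m+1}(\w{x})\w{e}_{2m}$. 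Thus $\hat{\w{a}}$ is simple over $\hat{\w{e}}$ with $D=\{2m:m\in\omega\}$, interval-to-one map $h(2m)=2m+1$, and $\w{b}_{2m+1}=c_m\w{e}_{2m}$; in particular $\|\w{b}_{2m+1}\|_{\ell_p}=|c_m|=k$ for $m\in S_k$, independently of $p$.

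Because $\hat{\w{e}}$ is unconditional in each $\ell_p$, Lemma \ref{general-lem2} applies and gives
\[
\CR(\ell_p,\hat{\w{a}})=\Bigl\{A\subseteq\omega:A\cap A_0\in\Fin\text{ and }\sum_{m:\,2m\in A}|c_m|^{-p}<\infty\Bigr\};
\]
re-grouping the inner sum by the partition $(S_k)_k$ turns it into $\sum_{k\geq 1}|A\cap A_k|/k^p$, which is the claimed formula. This ideal is summable, witnessed by the weights $r_n=1$ for $n\in A_0$ and $r_n=1/k^p$ for $n\in A_k$, $k\geq 1$, with $\sum_n r_n=\infty$ because each $A_k$ is infinite.

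For $1\leq p<q<\infty$ the inclusion $\CR(\ell_p,\hat{\w{a}})\subseteq\CR(\ell_q,\hat{\w{a}})$ is immediate from the formula; to separate the two ideals I pick $\alpha\in[p-1,q-1)$ (nonempty since $p<q$ and $p\geq 1$) and choose $A\subseteq\omega\setminus A_0$ so that $|A\cap A_k|=\lfloor k^{\alpha}\rfloor$. Then $\sum_k|A\cap A_k|/k^q\asymp\sum_k k^{\alpha-q}<\infty$ while $\sum_k|A\cap A_k|/k^p\asymp\sum_k k^{\alpha-p}=\infty$, so $A\in\CR(\ell_q,\hat{\w{a}})\setminus\CR(\ell_p,\hat{\w{a}})$. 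The genuinely nontrivial step is the conceptual one of engineering $\|\w{b}_k\|_{\ell_p}$ to be $p$-independent; the rest is routine bookkeeping of biorthogonality and the telescoping correction.
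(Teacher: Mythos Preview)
Your proof is correct and follows essentially the same strategy as the paper: build $\hat{\w a}$ simple over $\hat{\w e}$ so that each $\w b_k$ is a scalar multiple of a single $\w e_j$ (hence $\|\w b_k\|_{\ell_p}$ is $p$-independent), then read off the critical ideal from Lemma~\ref{general-lem2}. The paper's construction differs only cosmetically (it takes $A_0$ to be the evens and sets $\w a_n=\w e_n+m_n\w e_{n+1}$), and for the separation it chooses $|A\cap A_k|=\lceil k^{p}\rceil$; your choice $|A\cap A_k|=\lfloor k^{\alpha}\rfloor$ with $\alpha\in[p-1,q-1)$ is in fact a bit more robust, since $\lceil k^{p}\rceil/k^{q}\asymp k^{p-q}$ is summable only when $q>p+1$.
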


\begin{proof}
Let $(A_k)$ be any partition of $\omega$ into infinite sets such that $A_0=\{2n:n\in\omega\}$. 

For each $n\in\omega$ find $m_n\in\omega$ such that $n\in A_{m_n}$ and define:
\[
\w{a}_n=\w{e}_{n}+m_n\cdot \w{e}_{n+1}\text{ and } \w{a}_n^\star=\w{e}_{n}^\star - m_{n-1}\cdot \w{e}_{n-1}^\star
\]
(here we put $m_{-1}=0$). Then 
\begin{align*}
    \w{a}_n^\star(\w{a}_k) & = \w{e}_{n}^\star(\w{e}_{k})+m_k\w{e}_{n}^\star(\w{e}_{k+1}) - m_{n-1}\w{e}_{n-1}^\star(\w{e}_{k})- m_{n-1}m_k\w{e}_{n-1}^\star(\w{e}_{k+1})=\cr
    & =\delta_{nk}+m_k\delta_{n(k+1)}-m_{n-1} \delta_{(n-1)k}-m_{n-1}m_k\delta_{(n-1)(k+1)}=\delta_{nk},
\end{align*}
since if $n-1=k+1$, then one of $n-1,k\in A_0$ hence one of $m_{n-1},m_k$ equals $0$. 

Note that for any $\w{x} \in \ell_p$ we have:
\[
S_n(\hat{\w{e}})(\w{x})-S_n(\hat{\w{a}})(\w{x})=\w{e}_{n}^\star(\w{x})m_n\w{e}_{n+1}.
\]
In particular, $\hat{\w{a}}$ is simple over $\hat{\w{e}}$ (as witnessed by $D=\{n\in\omega:m_n\neq 0\}=\omega\setminus A_0=\{2n+1:n\in\omega\}$, $h(n)=n$ and $\w{b}_n=m_n\w{e}_{n+1}$). Hence, using Lemma \ref{general-lem2}, we have:
\begin{align*}
\CR(\ell_p,\hat{\w{a}}) & =\left\{A\subseteq\omega: A\setminus D\in\Fin\text{ and }\sum_{n\in A\cap D}\frac{\w{e}_{n}}{\left\|m_n\w{e}_{n+1}\right\|_{\ell_p}}\in \ell_p\right\}=\cr
& =\left\{A\subseteq\omega: A\setminus D\in\Fin\text{ and }\sum_{n\in A\cap D}\frac{\w{e}_{n}}{m_n}\in \ell_p\right\}=\cr
& =\left\{A\subseteq\omega: A\setminus D\in\Fin\text{ and }\sum_{n\in A\cap D}\frac{1}{m_n^p}<\infty\right\}=\cr
& =\left\{A\subseteq\omega:A\cap A_0\in\Fin\text{ and }\sum_{k\in\omega\setminus\{0\}}\frac{|A\cap A_k|}{k^p}<\infty\right\}. 
\end{align*}

The ideal $\CR(\ell_p,\hat{\w{a}})$ is summable, since it is equal to $\I_{\w{r}}$ for $\w{r}=(r_n)$ given by:
\[
    r_n =
    \begin{cases*}
    1    & if $n\in A_0$,\\
    \frac{1}{k^p} & if $n\in A_k$ for some $k>0$.
    \end{cases*}
  \]

To finish the proof, we will show that $1\leq p<q<\infty$ implies $\CR(\ell_q,\hat{\w{a}})\not\subseteq\CR(\ell_p,\hat{\w{a}})$. Indeed, any set $A\subseteq\omega$ such that $A\cap A_0=\emptyset$ and $|A\cap A_k|=\lceil k^p\rceil$, for $k\in\omega\setminus\{0\}$, is in $\CR(\ell_q,\hat{\w{a}})$, but not in $\CR(\ell_p,\hat{\w{a}})$.
\end{proof}

\subsection{Ideal Schauder basis of \texorpdfstring{$\ell_p$}{l } for each summable ideal}

\begin{definition}
If $\w{t}\in\omega^\omega$ is increasing with $t_0=0$ and $\w{r}$ is a sequence of positive reals such that $\sum_{n\in\omega}r_n=\infty$, then we define the ideal:
\[\I_{\w{r}}^\w{t}=\{A\subseteq\omega: \{k\in\omega: A\cap [t_k,t_{k+1})\neq\emptyset\}\in\I_{\w{r}}\}.
\]
\end{definition}

We will show that each ideal of the form $\I_{\w{r}}^\w{t}$ is critical in $\ell_p$, where $1\leq p<\infty$, for some $\hat{\w{a}}$ simple over $\hat{\w{e}}$. We will need the following technical lemma, which holds in a wider class of normed spaces. Recall that even though we are interested in ideal Schauder bases, Definition \ref{def-simple} states the property of a pair $(\hat{\w{a}},\hat{\w{u}})$ without requiring that any of $\hat{\w{a}},\hat{\w{u}}$ is a basis in some sense. 

\begin{lemma}
\label{lem:any_sequence}
Let $\w{t}\in\omega^\omega$ be increasing with $t_0=0$ and $\w{s}$ be a sequence of positive reals. Let also $X$ be any normed space with a $\Fin$-Schauder basis $\hat{\w{u}}$. Then there is $\hat{\w{a}}$ simple over $\hat{\w{e}}$ with the witnesses $D=\omega$, $h:\omega\to\omega$ given by $h\restriction[t_k,t_{k+1})=t_{k+1}$ and some sequence $(\w{b}_{t_{k+1}})_{k\in\omega}\subseteq X\setminus\{0\}$ such that $\|\w{b}_{t_{k+1}}\|_X=s_k$. Moreover, each $\w{a}_n$ is a finite linear combination of the vectors $\w{u}_k$, for $k\in\omega$.
\end{lemma}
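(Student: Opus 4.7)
The plan is to construct $\hat{\w{a}}$ by a recursive rank-one telescoping, choosing each $\w{b}_{t_{k+1}}$ collinear with $\w{u}_{t_k}+\w{b}_{t_k}$ so that the forced ``jumps'' of $S_n(\hat{\w{u}})-S_n(\hat{\w{a}})$ at block boundaries can be absorbed by single rank-one operators $\w{a}_{t_k}^\star(\cdot)\w{a}_{t_k}$. Concretely, I will set $\w{b}_{t_0}:=0$ and define recursively
\[
\lambda_k:=\frac{s_k}{\|\w{u}_{t_k}+\w{b}_{t_k}\|_X},\qquad \w{b}_{t_{k+1}}:=\lambda_k(\w{u}_{t_k}+\w{b}_{t_k}).
\]
Unrolling gives $\w{b}_{t_k}=\sum_{j<k}\bigl(\prod_{i=j}^{k-1}\lambda_i\bigr)\w{u}_{t_j}\in\on{span}\{\w{u}_{t_0},\ldots,\w{u}_{t_{k-1}}\}$, so by linear independence of the Schauder basis $\{\w{u}_n\}$ the vector $\w{u}_{t_k}+\w{b}_{t_k}$ is non-zero; hence each $\lambda_k>0$ is well-defined, $\w{b}_{t_{k+1}}\in X\setminus\{0\}$, and $\|\w{b}_{t_{k+1}}\|_X=s_k$ by design.

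Next I will define, for $n\in[t_k,t_{k+1})$,
\[
\w{a}_n:=\begin{cases}\w{u}_{t_k}+\w{b}_{t_k},& n=t_k,\\ \w{u}_n,& n>t_k,\end{cases}\qquad \w{a}_n^\star:=\begin{cases}\w{u}_{t_k}^\star-\lambda_k\w{u}_{t_{k+1}}^\star,& n=t_k,\\ \w{u}_n^\star,& n>t_k.\end{cases}
\]
The ``finite linear combination'' clause is immediate. To check that $\hat{\w{a}}$ is simple over $\hat{\w{u}}$ with the prescribed witnesses, I would induct on $n$, maintaining the invariant $S_n(\hat{\w{u}})(\w{x})-S_n(\hat{\w{a}})(\w{x})=\w{u}_{t_{k+1}}^\star(\w{x})\w{b}_{t_{k+1}}$ on $n\in[t_k,t_{k+1})$. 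For $n>t_k$ inside a block the invariant is preserved because $\w{a}_n^\star(\w{x})\w{a}_n=\w{u}_n^\star(\w{x})\w{u}_n$. At the boundary $n=t_k$ the essential computation
\[
\w{a}_{t_k}^\star(\w{x})\w{a}_{t_k}=\w{u}_{t_k}^\star(\w{x})(\w{u}_{t_k}+\w{b}_{t_k})-\w{u}_{t_{k+1}}^\star(\w{x})\,\lambda_k(\w{u}_{t_k}+\w{b}_{t_k})=\w{u}_{t_k}^\star(\w{x})\w{u}_{t_k}+\w{u}_{t_k}^\star(\w{x})\w{b}_{t_k}-\w{u}_{t_{k+1}}^\star(\w{x})\w{b}_{t_{k+1}}
\]
combines with the inductive $S_{t_k-1}(\hat{\w{u}})-S_{t_k-1}(\hat{\w{a}})=\w{u}_{t_k}^\star(\w{x})\w{b}_{t_k}$ (vacuous for $k=0$) to produce the desired jump to $\w{u}_{t_{k+1}}^\star(\w{x})\w{b}_{t_{k+1}}$.

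The main obstacle is conceptual rather than computational: the jump that $\w{a}_{t_k}^\star(\w{x})\w{a}_{t_k}$ has to realise is a priori of rank two (an outgoing $\w{u}_{t_k}^\star(\w{x})\w{b}_{t_k}$ plus an incoming $-\w{u}_{t_{k+1}}^\star(\w{x})\w{b}_{t_{k+1}}$), whereas a product $\w{a}_{t_k}^\star(\w{x})\w{a}_{t_k}$ is rank one; the collinearity choice $\w{b}_{t_{k+1}}\parallel\w{u}_{t_k}+\w{b}_{t_k}$ is precisely what collapses the rank and, crucially, works for blocks of arbitrary positive length—including length one. For later use of $\hat{\w{a}}$ as an (ideal) Schauder basis, I would also verify biorthogonality $\w{a}_n^\star(\w{a}_m)=\delta_{nm}$; the only delicate case is $n=t_k$, $m=t_l$ with $k<l$, which reduces to the telescoping identity $\prod_{i=k}^{l-1}\lambda_i-\lambda_k\prod_{i=k+1}^{l-1}\lambda_i=0$ applied to the coefficients of $\w{b}_{t_l}$, everything else being transparent.
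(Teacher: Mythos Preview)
Your proof is correct and is essentially the same construction as the paper's, only reparameterized: the paper writes $\w{a}_{t_k}=\sum_{i\le k}c_i\w{u}_{t_i}$ with $c_{k+1}=\frac{1}{s_k}\|\sum_{i\le k}c_i\w{u}_{t_i}\|_X$, which coincides with yours via $\lambda_k=c_k/c_{k+1}$ (so your $\w{a}_{t_k}$ and $\w{a}_{t_k}^\star$ are $c_k^{-1}$- and $c_k$-multiples of the paper's). The paper verifies the difference formula by a direct telescoping computation rather than by your induction on $n$, but the content is identical.
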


\begin{proof}
Define inductively the sequence $(c_k)\subseteq\mathbb{R}$ by $c_0=1$ and 
\[
c_{k+1}=\frac{1}{s_k} \cdot \left\|\sum_{i=0}^k c_i\w{u}_{t_i}\right\|_X
\]
for all $k\in\omega$. Set:
\[
    \w{a}_n =
    \begin{cases*}
    \sum_{i=0}^{k}   c_i\w{u}_{t_i},     & if $n=t_k$ for some $k\in \omega$,\\
    \w{u}_n, & otherwise,  \\
    \end{cases*}
  \]
and
\[
    \w{a}_n^\star =
    \begin{cases*}
    \frac{\w{u}_{t_k}^\star}{c_k} - \frac{u_{t_{k+1}}^\star}{c_{k+1}}, & if $n=t_k$ for some $k\in \omega$,\\
    \w{u}_n^\star, & otherwise.
    \end{cases*}
\]
Clearly, $\w{a}_n^\star(\w{a}_m)=\delta_{nm}$ for all $n,m\in\omega$. Indeed, it is obvious whenever $n \notin \{t_k:k\in\omega\}$ or $m \notin \{t_k:k\in\omega\}$, and the remaining follows from the case $t_k=k$, which is easy. 

Denote $T=\{t_k:k\in\omega\}$ and let $g:\omega\to\omega$ be given by:
\[
g(n)=k\ \Longleftrightarrow n\in[t_k,t_{k+1}).
\]
Note that $h(n)=t_{g(n)+1}$ for all $n\in\omega$ (where the function $h$ is defined in the formulation of this lemma). Observe that for any $\w{x} \in X$ we have:
\begin{align*}
S_n(\hat{\w{a}})(\w{x}) & = \sum_{i \in (n+1)\setminus T} \w{u}_i^\star (x) \w{u}_i + \sum_{j=0}^{g(n)} \left(\frac{\w{u}_{t_j}^\star}{c_j} - \frac{u_{t_{j+1}}^\star}{c_{j+1}}\right)(\w{x})\sum_{i=0}^{j} c_i\w{u}_{t_i}
= \cr
& =S_n(\hat{\w{u}})(\w{x})-\w{u}_{t_{g(n)+1}}^\star(\w{x})\frac{\sum^{g(n)}_{i=0} c_i \w{u}_{t_i}}{c_{g(n)+1}}.
\end{align*} 
Thus, $\hat{\w{a}}$ is simple over $\hat{\w{e}}$ as witnessed by $D=\omega$, $h$ and $(\w{b}_{t_{k+1}})_{k\in\omega}\subseteq X\setminus\{0\}$ such that:
\[
\w{b}_{t_{k+1}}=\frac{\sum^{k}_{i=0} c_i \w{e}_{t_i}}{c_{k+1}},
\]
for all $k\in\omega$. In particular, $\|\w{b}_{t_{k+1}}\|_X=s_k$, for all $k\in\omega$.
\end{proof}

\begin{proposition}
\label{prop-summable}
Let $\w{t}\in\omega^\omega$ be increasing with $t_0=0$ and $\w{r}$ be a sequence of positive reals such that $\sum_{n\in\omega}r_n=\infty$. Then for each $1\leq p<\infty$ there is $\hat{\w{a}}$ simple over $\hat{\w{e}}$ such that $\CR(\ell_p,\hat{\w{a}})=\I_{\w{r}}^\w{t}$.
\end{proposition}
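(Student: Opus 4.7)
The plan is to apply Lemma \ref{lem:any_sequence} in $X=\ell_p$ with $\hat{\w{u}}=\hat{\w{e}}$, choosing the free sequence $\w{s}$ precisely so that the critical ideal computed via Lemma \ref{general-lem2} becomes $\I_{\w{r}}^{\w{t}}$. Concretely, I would set
\[
s_k = r_k^{-1/p},\qquad k\in\omega.
\]
Since every $r_k>0$, this is a legitimate sequence of positive reals, so Lemma \ref{lem:any_sequence} supplies $\hat{\w{a}}$ simple over $\hat{\w{e}}$ with witnessing data $D=\omega$, $h\restriction [t_k,t_{k+1})\equiv t_{k+1}$, and $\|\w{b}_{t_{k+1}}\|_{\ell_p}=s_k=r_k^{-1/p}$.

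Next, because $\hat{\w{e}}$ is an \emph{unconditional} $\Fin$-Schauder basis of $\ell_p$, I can invoke Lemma \ref{general-lem2}, which (using $D=\omega$, so the condition $A\setminus D\in\Fin$ is vacuous) yields
\[
\CR(\ell_p,\hat{\w{a}})=\left\{A\subseteq\omega: \sum_{k\in h[A]}\frac{\w{e}_{k}}{\|\w{b}_{k}\|_{\ell_p}}\in\ell_p\right\}.
\]
By the definition of $h$, the image $h[A]$ equals $\{t_{k+1}: A\cap[t_k,t_{k+1})\neq\emptyset\}$. Because the vectors $\w{e}_{t_{k+1}}$ are pairwise disjoint unit vectors, the displayed series lies in $\ell_p$ if and only if
\[
\sum_{k:\, A\cap[t_k,t_{k+1})\neq\emptyset}\|\w{b}_{t_{k+1}}\|_{\ell_p}^{-p}=\sum_{k:\, A\cap[t_k,t_{k+1})\neq\emptyset}r_k<\infty,
\]
which is exactly the statement that $\{k\in\omega: A\cap[t_k,t_{k+1})\neq\emptyset\}\in\I_{\w{r}}$, i.e.\ $A\in\I_{\w{r}}^{\w{t}}$. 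Hence $\CR(\ell_p,\hat{\w{a}})=\I_{\w{r}}^{\w{t}}$, as required.

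There is no real obstacle: the construction of Lemma \ref{lem:any_sequence} has already done the delicate job of realising an arbitrary prescribed sequence of norms $\|\w{b}_{t_{k+1}}\|$, and Lemma \ref{general-lem2} has reduced the identification of $\CR(\ell_p,\hat{\w{a}})$ to a summability question. The only genuine choice in the argument is the exponent matching $s_k=r_k^{-1/p}$, which converts the $\ell_p$-summability condition $\sum \|\w{b}_{k}\|_{\ell_p}^{-p}<\infty$ produced by Lemma \ref{general-lem2} into the $\I_{\w{r}}$-summability condition $\sum r_k<\infty$ defining $\I_{\w{r}}^{\w{t}}$; the assumption $\sum_n r_n=\infty$ ensures that the resulting critical ideal is proper (i.e.\ $\omega\notin\CR(\ell_p,\hat{\w{a}})$), so $\hat{\w{a}}$ is in fact an ideal Schauder basis of $\ell_p$.
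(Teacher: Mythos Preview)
Your proof is correct and follows essentially the same route as the paper: apply Lemma~\ref{lem:any_sequence} to realise prescribed norms $\|\w{b}_{t_{k+1}}\|_{\ell_p}$, then read off the critical ideal via Lemma~\ref{general-lem2}. Your choice $s_k=r_k^{-1/p}$ is the right one; the paper's text states $s_k=r_k^{1/p}$, but its subsequent computation (the line $\sum_{k\in\tilde A}\w{e}_{t_{k+1}}r_k^{1/p}\in\ell_p$) is only consistent with $\|\w{b}_{t_{k+1}}\|_{\ell_p}=r_k^{-1/p}$, so this appears to be a typo in the paper rather than a discrepancy in your argument.
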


\begin{proof}
Fix $1\leq p<\infty$ and apply Lemma \ref{lem:any_sequence} for $\w{t}\in\omega^\omega$ and $\w{s}$ given by $s_k=r_k^{1/p}$, for all $k\in\omega$, to get $\hat{\w{a}}$ simple over $\hat{\w{e}}$ with the witnesses $D=\omega$, $h:\omega\to\omega$ given by $h\restriction[t_k,t_{k+1})=t_{k+1}$ and some sequence $(\w{b}_{t_{k+1}})_{k\in\omega}$ such that $\|\w{b}_{t_{k+1}}\|_{\ell_p}=r_k^{1/p}$.

Applying Lemma \ref{general-lem2} and denoting:
\[
\tilde{A}=\left\{k\in\omega:A\cap [t_k,t_{k+1})\neq\emptyset\right\}
\]
for all $A\subseteq\omega$, we have:
\begin{align*}
\CR(\ell_p,\hat{\w{a}}) & =\left\{A\subseteq\omega: \sum_{k\in h[A]}\frac{\w{e}_{k}}{\left\|\w{b}_k\right\|_{\ell_p}}\in \ell_p\right\}=\cr
& =\left\{A\subseteq\omega: \sum_{k\in \tilde{A}}\w{e}_{t_{k+1}}r_k^{1/p}\in \ell_p\right\}=\cr
& =\left\{A\subseteq\omega: \sum_{k\in \tilde{A}}r_k<\infty\right\}=\I^{\w{t}}_{\w{r}}.    
\end{align*}
\end{proof}

\begin{corollary}
For each $1\leq p<\infty$, there are $2^\omega$ ideal Schauder bases of $\ell_p$ with pairwise non-isomorphic critical ideals.
\end{corollary}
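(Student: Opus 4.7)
The plan is to combine Proposition~\ref{prop-summable} with a classical cardinality result. Taking $\w{t}=(k)_{k\in\omega}$ so that the blocks $[t_k,t_{k+1})$ are singletons, the ideal $\I_{\w{r}}^{\w{t}}$ coincides with the ordinary summable ideal $\I_{\w{r}}$. Proposition~\ref{prop-summable} therefore gives, for every sequence $\w{r}=(r_n)$ of positive reals with $\sum_n r_n=\infty$, an $\hat{\w{a}}$ simple over $\hat{\w{e}}$ with $\CR(\ell_p,\hat{\w{a}})=\I_{\w{r}}$. Hence the corollary reduces to exhibiting $2^\omega$ pairwise non-isomorphic summable ideals and then pulling them back as critical ideals via Proposition~\ref{prop-summable}.

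For that reduction, a concrete candidate family is $\{\I^\alpha:\alpha\in(0,1]\}$ with $\I^\alpha:=\I_{\w{r}^\alpha}$ and $r^\alpha_n:=(n+1)^{-\alpha}$; each such ideal is a proper summable ideal because $\sum_n(n+1)^{-\alpha}=\infty$ whenever $\alpha\leq 1$. Pairwise non-isomorphism of $\I^\alpha$ and $\I^\beta$ for distinct $\alpha,\beta\in(0,1]$ is a classical fact from the theory of $F_\sigma$ P-ideals; a reference is Farah's monograph~\cite{Farah}. Putting these together, the family $\{\hat{\w{a}}_\alpha:\alpha\in(0,1]\}$ obtained by applying Proposition~\ref{prop-summable} to each $\I^\alpha$ is the desired continuum-sized collection of ideal Schauder bases of $\ell_p$.

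The main obstacle is the non-isomorphism step. It is genuinely delicate because two positive weight sequences need not be comparable pointwise in order to define the same summable ideal; perturbations on sparse sets are absorbed in the ideal structure, so one cannot conclude non-isomorphism by a naive pointwise comparison of $(n+1)^{-\alpha}$ with $(\pi(n)+1)^{-\beta}$ for a putative witnessing bijection $\pi$. A clean treatment proceeds via a genuine ideal-theoretic invariant -- for instance a Tukey-type or cofinal dimension of the ideal -- and in the polished write-up I would extract this by citation from \cite{Farah} rather than redevelop it in place.
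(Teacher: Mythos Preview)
Your approach is the same as the paper's: reduce via Proposition~\ref{prop-summable} (with $\w{t}=(k)_k$, so that $\I_{\w{r}}^{\w{t}}=\I_{\w{r}}$) to the existence of $2^\omega$ pairwise non-isomorphic summable ideals, and then cite that existence result. The paper's proof is the one-liner ``It follows from Proposition~\ref{prop-summable} and \cite[Theorem~1]{MezaGuzman}.''

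The only substantive difference is the citation and your choice of a concrete family. The paper invokes \cite[Theorem~1]{MezaGuzman}, which directly supplies continuum many pairwise non-isomorphic summable ideals; your pointer to Farah's monograph \cite{Farah} is vaguer, and I do not believe the specific claim that the ideals $\I_{(n+1)^{-\alpha}}$, $\alpha\in(0,1]$, are pairwise non-isomorphic is stated there. You yourself flag this step as delicate and defer it to citation, which is fine in spirit, but committing to the particular family $(\I^\alpha)_\alpha$ is an unnecessary risk: if the citation does not literally cover that family, the proof has a gap. The cleaner route---and the one the paper takes---is to cite a result that produces \emph{some} continuum-sized family of pairwise non-isomorphic summable ideals, without naming the weights, and then feed each of those ideals into Proposition~\ref{prop-summable}.
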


\begin{proof}
It follows from Proposition \ref{prop-summable} and \cite[Theorem 1]{MezaGuzman}.
\end{proof}

Thus far, each presented ideal Schauder basis of $\ell_p$ (i.e., the ones from Theorem \ref{thm-distinguishing} and Propositions \ref{prop-known-example} and \ref{prop-different_critical}) had a summable critical ideal. Now, using Proposition \ref{prop-summable}, we will show an ideal Schauder basis of $\ell_p$ with a non-summable critical ideal. 

\begin{corollary}
\label{cor:non-summable}
For any $1\leq p<\infty$ there is $\hat{\w{a}}$ such that $\CR(\ell_p,\hat{\w{a}})$ is a non-summable $\bf{F_\sigma}$ ideal. 
\end{corollary}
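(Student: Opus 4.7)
The plan is to apply Proposition~\ref{prop-summable} with a carefully chosen pair $(\w{t},\w{r})$ so that the resulting ideal $\I_{\w{r}}^{\w{t}}$, which will automatically be $\bf{F_\sigma}$, additionally fails to be summable. Concretely, I take $t_k=2^k-1$ for each $k\in\omega$, so $t_0=0$ and the intervals $A_k=[t_k,t_{k+1})$ satisfy $|A_k|=2^k$, together with $r_k=1/(k+1)$, which gives $\I_{\w{r}}=\I_{1/n}$ and $\sum r_k=\infty$. Proposition~\ref{prop-summable} then yields $\hat{\w{a}}$ simple over $\hat{\w{e}}$ with $\CR(\ell_p,\hat{\w{a}})=\I_{\w{r}}^{\w{t}}$. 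That this ideal is $\bf{F_\sigma}$ is immediate from Mazur's theorem applied to the lsc submeasure $\phi(A)=\sum_{k:\,A\cap A_k\neq\emptyset}r_k$, for which $\Fin(\phi)=\I_{\w{r}}^{\w{t}}$.

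The crux is showing $\I_{\w{r}}^{\w{t}}$ is not summable. Assume toward a contradiction that $\I_{\w{r}}^{\w{t}}=\I_{\w{s}}$ for some sequence $\w{s}=(s_n)$ of positive reals, and for each $k\in\omega$ set $M_k=\sum_{n\in A_k}s_n$ and $\mu_k=\min_{n\in A_k}s_n$, so that $\mu_k\leq M_k/|A_k|=M_k/2^k$. Two observations drive the argument. First, for every $E\subseteq\omega$ the set $\bigcup_{k\in E}A_k$ lies in $\I_{\w{r}}^{\w{t}}$ exactly when $E\in\I_{1/n}$, and lies in $\I_{\w{s}}$ exactly when $\sum_{k\in E}M_k<\infty$; equating these yields $\I_{\w{M}}=\I_{1/n}$. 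Second, if $T=\{n^\star_k:k\in\omega\}$ where $n^\star_k\in A_k$ is chosen with $s_{n^\star_k}=\mu_k$, then $\{k:T\cap A_k\neq\emptyset\}=\omega\notin\I_{1/n}$, so $T\notin\I_{\w{s}}$; hence $\sum_k\mu_k=\infty$ and, a fortiori, $\sum_k M_k/2^k=\infty$.

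The contradiction now comes from boundedness of $(M_k)$, which uses tallness of $\I_{1/n}$. If $\{k\in\omega:M_k>c\}$ were infinite for some $c>0$, then by tallness it would contain an infinite subset $B\in\I_{1/n}$; but $\sum_{k\in B}M_k\geq c|B|=\infty$ would contradict $B\in\I_{\w{M}}=\I_{1/n}$. Thus $M_k\to 0$, so $M_k\leq M^*$ for some constant $M^*$, and consequently $\sum_k M_k/2^k\leq M^*\sum_k 2^{-k}<\infty$, contradicting $\sum_k M_k/2^k=\infty$. The only slightly delicate step is setting up the equality $\I_{\w{M}}=\I_{1/n}$ cleanly from the hypothesis $\I_{\w{s}}=\I_{\w{r}}^{\w{t}}$; the tallness bound on $(M_k)$ and the minimizer-transversal calculation are then straightforward.
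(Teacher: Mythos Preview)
Your proof is correct and follows essentially the same strategy as the paper: the same choice of blocks $A_k$ of length $2^k$ with $\w{r}$ giving $\I_{1/n}$, the same transversal of minimizers $\mu_k$ to force $\sum_k\mu_k=\infty$, and the same appeal to tallness of $\I_{1/n}$ to produce the contradiction. The differences are organizational rather than substantive. For the $\bf{F_\sigma}$ part, the paper argues via continuity of the map $A\mapsto\{k:A\cap A_k\neq\emptyset\}$, while you invoke Mazur's theorem with an explicit lsc submeasure; both are fine. For non-summability, the paper works directly with the set $Z=\{k:2^k\mu_k\geq 1\}$, shows it is infinite (else the transversal lies in $\I_{\w{s}}$), thins it by tallness to $Z'\in\I_{1/n}$, and then exhibits $\bigcup_{k\in Z'}A_k$ as a set in one ideal but not the other. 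Your route through the intermediate identity $\I_{\w{M}}=\I_{1/n}$ (with $M_k=\sum_{n\in A_k}s_n$) is a clean repackaging of the same computation: it converts the tallness step into boundedness of $(M_k)$ and the transversal step into $\sum_k M_k/2^k=\infty$, yielding the same contradiction.
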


\begin{proof}
Let $t_0=0$ and $t_k=2^k$ for all $k\in\omega\setminus\{0\}$. Denote $\w{t}=(t_k)$ and $I_k=[t_k,t_{k+1})$, for all $k\in\omega$. In particular, $|I_0|=2$ and $|I_k|=2^k$ for all $k>0$. By Proposition \ref{prop-summable}, for each $1\leq p<\infty$ there is $\hat{\w{a}}$ such that $\CR(\ell_p,\hat{\w{a}})=\I_{1/n}^\w{t}$. Therefore, we need to demonstrate that $\I_{1/n}^\w{t}$ is non-summable and $\bf{F_\sigma}$.

To verify that $\I_{1/n}^\w{t}$ is $\bf{F_\sigma}$, note that $\I_{1/n}^\w{t}=\phi^{-1}[\I_{1/n}]$, where $\phi:\mathcal{P}(\omega)\to \mathcal{P}(\omega)$ given by $\phi(A)=\{k\in\omega: A\cap I_k\neq\emptyset\}$, for all $A\subseteq\omega$, is continuous.

Suppose towards contradiction that $\I_{1/n}^\w{t}$ is a summable ideal, i.e., that there is $\w{r}=(r_n)$ such that $r_n>0$, $\sum_{n\in\omega}r_n=\infty$ and $\I_{1/n}^\w{t}=\I_{\w{r}}$. Denote $d_k=\min\{r_n: n\in I_k\}$ and $n_k=\min\{n\in I_k: r_n=d_k\}$. Observe that $Z=\{k\in\omega: 2^k\cdot d_k\geq 1\}$ is infinite, since otherwise we would get:
\[
\sum_{k\in\omega}r_{n_k}=\sum_{k\in Z}d_k+\sum_{k\in\omega\setminus Z}d_k<\sum_{k\in Z}d_k+\sum_{k\in\omega\setminus Z}\frac{1}{2^k}<\infty,
\]
which shows that $\{n_k:k\in\omega\}\in\I_{\w{r}}$ and contradicts $\I_{1/n}^\w{t}=\I_{\w{r}}$, as $\{n_k:k\in\omega\}\notin\I_{1/n}^\w{t}$. However, as $Z$ is infinite, we can find an infinite $Z'\subset Z$ such that $Z'\in\I_{1/n}$. Then $\bigcup_{k\in Z'}I_k\in \I_{1/n}^\w{t}$, but:
\[
\sum_{n\in \bigcup_{k\in Z'}I_k}r_n\geq \sum_{k\in Z'}|I_k|d_k\geq\sum_{k\in Z'}2^k\cdot d_k\geq \sum_{k\in Z'}1=\infty,
\]
so $\bigcup_{k\in Z'}I_k\notin \I_{\w{r}}$, which again contradicts $\I_{1/n}^\w{t}=\I_{\w{r}}$.
\end{proof}

\subsection{Characterization}

\begin{theorem}
\label{char:lp}
Let $1\leq p<\infty$. The following are equivalent for any nontrivial ideal $\I$:
\begin{itemize}
    \item[(a)] there is $\hat{\w{a}}$ simple over $\hat{\w{e}}$ such that $\CR(\ell_p,\hat{\w{a}})=\I$,
    \item[(b)] $\I=\I^\w{t}_\w{r}$, for some increasing $\w{t}\in\omega^\omega$ with $t_0=0$ and some sequence $\w{r}$ of positive reals such that $\sum_{n\in\omega}r_n=\infty$.
\end{itemize}
\end{theorem}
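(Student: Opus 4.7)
My plan is to dispatch (b)$\Rightarrow$(a) immediately by citing Proposition \ref{prop-summable}, which for each valid pair $(\w{t},\w{r})$ as in (b) produces a simple $\hat{\w{a}}$ with $\CR(\ell_p,\hat{\w{a}})=\I^\w{t}_\w{r}$. The substantial direction is (a)$\Rightarrow$(b).

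To tackle (a)$\Rightarrow$(b), I would take the given simple $\hat{\w{a}}$ with its witnesses $D$, $h$, $(\w{b}_k)_{k\in h[D]}$ and invoke Lemma \ref{general-lem2}: since $\hat{\w{e}}$ is unconditional in $\ell_p$, and since $\sum_{k\in B}\w{e}_k/\|\w{b}_k\|_{\ell_p}\in\ell_p$ iff $\sum_{k\in B}\|\w{b}_k\|_{\ell_p}^{-p}<\infty$, the lemma gives
\[
\CR(\ell_p,\hat{\w{a}})=\left\{A\subseteq\omega:A\setminus D\in\Fin\text{ and }\sum_{k\in h[A\cap D]}r_k<\infty\right\},
\]
where $r_k=\|\w{b}_k\|_{\ell_p}^{-p}$. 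Writing $I_k=h^{-1}[\{k\}]$, the family $\{I_k:k\in h[D]\}$ is a pairwise disjoint collection of nonempty intervals whose union is $D$.

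The crux is to realize this ideal as $\I^\w{t}_{\w{r'}}$. My idea is to list in left-to-right order the pieces $P_0<P_1<\cdots$ consisting of all the intervals $I_k$ together with each singleton $\{n\}$ for $n\in\omega\setminus D$; setting $t_j=\min P_j$ makes $[t_j,t_{j+1})=P_j$ and automatically $t_0=0$. I would then assign weights
\[
r'_j=\begin{cases}r_k,&\text{if }P_j=I_k,\\ 1,&\text{if }P_j=\{n\}\text{ for some }n\in\omega\setminus D.\end{cases}
\]
Splitting $\{j:A\cap P_j\neq\emptyset\}$ by whether $P_j$ is an interval or a singleton, one obtains
\[
\sum_{j:\,A\cap P_j\neq\emptyset}r'_j=\sum_{k\in h[A\cap D]}r_k+|A\cap(\omega\setminus D)|,
\]
which is finite precisely when $A\in\CR(\ell_p,\hat{\w{a}})$.

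All that remains is the bookkeeping that $\w{t}$ is a genuine infinite increasing sequence and that $\sum_j r'_j=\infty$. Both follow from nontriviality of $\CR(\ell_p,\hat{\w{a}})$: if both $h[D]$ and $\omega\setminus D$ were finite then the two clauses defining the ideal would be vacuous and the ideal would equal $\mathcal{P}(\omega)$, so at least one is infinite and there are infinitely many pieces; the singleton weights already sum to $\infty$ whenever $\omega\setminus D$ is infinite, and otherwise nontriviality forces $\sum_k r_k=\infty$. I expect the only genuine conceptual obstacle to be encoding the clause "$A\setminus D\in\Fin$" inside a summable-interval ideal of the form $\I^\w{t}_{\w{r'}}$; the trick of using weight-one singletons for each point of $\omega\setminus D$ resolves this cleanly, since the corresponding portion of the sum is just the cardinality of $A\cap(\omega\setminus D)$.
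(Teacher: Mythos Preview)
Your proposal is correct and follows essentially the same route as the paper's proof: both partition $\omega$ into the preimage intervals $h^{-1}[\{k\}]$ together with the singletons from $\omega\setminus D$, set $t_j=\min P_j$, assign weight $\|\w{b}_k\|_{\ell_p}^{-p}$ to the interval pieces and weight $1$ to the singletons, and then read off $\CR(\ell_p,\hat{\w{a}})=\I^\w{t}_{\w{r'}}$ from Lemma \ref{general-lem2}. Your write-up is in fact a bit more careful than the paper's---you explicitly verify that there are infinitely many pieces and that $\sum_j r'_j=\infty$, which the paper leaves implicit.
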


\begin{proof}
(b)$\implies$(a): This is Proposition \ref{prop-summable}.

(a)$\implies$(b): Fix some $\hat{\w{a}}$ simple over $\hat{\w{e}}$ and let $D$, $h$ and $(\w{b}_n)$ be as in Definition \ref{def-simple}. Find a partition $(I_n)$ of $\omega$ into consecutive intervals such that either $I_n=h^{-1}[\{k\}]$ for some $k\in\omega$ (note that in this case $I_n\subseteq D$) or $I_n=\{i\}$ for some $i\in\omega\setminus D$. Denote $S=\{n\in\omega: I_n=h^{-1}[\{k\}]\text{ for some }k\in\omega\}$. 

For each $n\in\omega$ put $t_n=\min I_n$ and
\[
r_n=\begin{cases}
    \left(\frac{1}{\|\w{b}_n\|_{\ell_p}}\right)^p, & \text{if } n\in S,\\
    1, & \text{otherwise}.
\end{cases}
\]
Then, using Lemma \ref{general-lem2}, we get that $\I^\w{t}_\w{r}=\CR(\ell_p,\hat{\w{a}})$. Since $\CR(\ell_p,\hat{\w{a}})=\I$ is an ideal, $\omega\notin\CR(\ell_p,\hat{\w{a}})$, which implies that $\sum_{n\in\omega}r_n=\infty$.
\end{proof}

\section{The space \texorpdfstring{$c_0$}{c } and the James' space}\label{S:c0}

\subsection{A non-\texorpdfstring{$\bf{F_\sigma}$}{} critical ideal}
Next result gives us first example of a non-$\bf{F_\sigma}$ ideal of the form $\CR(X,\hat{\w{a}})$.

\begin{proposition}
\label{0xFin}
There is $\hat{\w{a}}$ simple over $\hat{\w{e}}$ such that $\CR(c_0,\hat{\w{a}})\cong\{\emptyset\}\otimes\Fin$.
\end{proposition}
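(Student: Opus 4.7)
My plan is to invoke Lemma \ref{lem:any_sequence} (to realise a prescribed sequence of $\w{b}$-norms) and then read off the critical ideal via Lemma \ref{general-lem2}. The key observation is that in $c_0$, a series $\sum_{k} \w{e}_{n_k}/c_k$ with distinct indices $n_k$ lies in $c_0$ iff $c_k\to\infty$; this is a pointwise--to--zero condition, which is flexible enough to encode $\{\emptyset\}\otimes\Fin$, whereas the analogous summability condition for $\ell_p$ only produced $\bf{F_\sigma}$ ideals in the previous section.

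Concretely, first I would fix a partition $(C_m)_{m\in\omega}$ of $\omega$ into infinite sets and apply Lemma \ref{lem:any_sequence} with $X=c_0$, $\hat{\w{u}}=\hat{\w{e}}$, $t_k=k$, and $s_k=m+1$ whenever $k\in C_m$. This delivers an $\hat{\w{a}}$ simple over $\hat{\w{e}}$ with witnesses $D=\omega$, $h(k)=k+1$, and $\|\w{b}_{k+1}\|_{c_0}=s_k$. Since $\hat{\w{e}}$ is unconditional in $c_0$, Lemma \ref{general-lem2} yields
\[
\CR(c_0,\hat{\w{a}})=\left\{A\subseteq\omega:\sum_{k\in A}\frac{\w{e}_{k+1}}{s_k}\in c_0\right\}.
\]
The summands have pairwise disjoint supports in $c_0$, so this series is in $c_0$ iff $1/s_k\to 0$ along $A$, i.e.\ iff $\{k\in A:s_k\le M\}$ is finite for every $M$. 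Because $\{k\in\omega:s_k\le M\}=\bigcup_{m<M}C_m$ is a finite union of the blocks, this condition reduces to $A\cap C_m\in\Fin$ for every $m\in\omega$.

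Finally, since each $C_m$ is infinite, any bijection $\varphi\colon\omega\times\omega\to\omega$ mapping $\{m\}\times\omega$ bijectively onto $C_m$ witnesses the isomorphism $\CR(c_0,\hat{\w{a}})\cong\{\emptyset\}\otimes\Fin$. I do not foresee a serious obstacle here, because Lemmas \ref{lem:any_sequence} and \ref{general-lem2} already package all the analytic work; the only choice to make is of the norms $s_k$, and the block structure $s_k=m+1$ on $C_m$ is precisely what stratifies $\omega$ into the sections required by $\{\emptyset\}\otimes\Fin$. The mild subtlety worth double-checking is that the $c_0$-convergence condition really characterises membership coordinate-by-coordinate -- but this is immediate from the disjointness of supports of the $\w{e}_{k+1}$.
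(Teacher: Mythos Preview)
Your argument is correct and follows essentially the same route as the paper: fix a partition of $\omega$ into infinite blocks, build $\hat{\w{a}}$ simple over $\hat{\w{e}}$ so that $\|\w{b}_{h(n)}\|_{c_0}$ is constant (equal to $m+1$) on the $m$th block, and then read off the critical ideal via Lemma~\ref{general-lem2}. The only cosmetic difference is that the paper reuses the explicit basis $\w{a}_n=\w{e}_n+m_n\w{e}_{n+1}$ from Proposition~\ref{prop-different_critical} (with $D=\omega\setminus A_0$, $h=\on{id}$), whereas you invoke Lemma~\ref{lem:any_sequence} with $t_k=k$ to produce a basis with $D=\omega$, $h=\on{id}+1$; this is in fact exactly the construction the paper itself uses one proposition later (Proposition~\ref{0xFin-lepsze}), so you have anticipated that slight streamlining.
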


\begin{proof}
Let $(A_k)$ and $\hat{\w{a}}$ be as in the proof of Proposition \ref{prop-different_critical}, i.e., $(A_k)$ is a partition of $\omega$ into infinite sets such that $A_0=\{2n:n\in\omega\}$, $\w{a}_n=\w{e}_{n}+m_n\cdot \w{e}_{n+1}$ and $\w{a}_n^\star=\w{e}_{n}^\star - m_{n-1}\cdot \w{e}_{n-1}^\star$, where $m_n$ is given by $n\in A_{m_n}$.

Define:
\[
\J=\left\{A\subset\omega:\forall_{k\in\omega}A\cap A_k\in\Fin\right\}. 
\]
Clearly, $\J$ is isomorphic with $\{\emptyset\}\otimes\Fin$, as witnessed by any bijection $f:\omega^2\to\omega$ such that $f[\{k\}\times\omega]=A_k$, for all $k\in\omega$.

We will show that $\CR(c_0,\hat{\w{a}})=\J$. Similarly as in the proof of Proposition \ref{prop-different_critical}, for any $\w{x} \in c_0$ we have: 
\[
S_n(\hat{\w{e}})(\w{x})-S_n(\hat{\w{a}})(\w{x})=\w{e}_{n}^\star(\w{x})m_n\w{e}_{n+1},
\]
i.e., $\hat{\w{a}}$ is simple over $\hat{\w{e}}$ as witnessed by $D=\{n\in\omega:m_n\neq 0\}=\omega\setminus A_0=\{2n+1:n\in\omega\}$, $h(n)=n$ and $\w{b}_n=m_n\w{e}_{n+1}$. Hence, applying Lemma \ref{general-lem2}, we have:
\begin{align*}
\CR(c_0,\hat{\w{a}}) & =\left\{A\subseteq\omega: A\setminus D\in\Fin\text{ and }\sum_{n\in A\cap D}\frac{\w{e}_{n}}{\left\|m_n\w{e}_{n+1}\right\|_{c_0}}\in c_0\right\}=\cr
& =\left\{A\subseteq\omega: A\setminus D\in\Fin\text{ and }\sum_{n\in A\cap D}\frac{\w{e}_{n}}{m_n}\in c_0\right\}=\cr
& =\left\{A\subseteq\omega: A\setminus D\in\Fin\text{ and }\lim_{n\in A\cap D}\frac{1}{m_n}=0\right\}=\J.   
\end{align*}
\end{proof}

\begin{remark}
Note that $\hat{\w{a}}$ from Proposition \ref{0xFin} yields an interesting example of a basis with different behaviour in various spaces: 
\[\CR(c_0,\hat{\w{a}})\cong\{\emptyset\}\otimes\Fin,\]
but for $p\in [1,\infty)$ we have:
\[
\CR(\ell_p,\hat{\w{a}})=\left\{A\subseteq\omega:A\cap A_0\in\Fin\text{ and }\sum_{k\in\omega\setminus\{0\}}\frac{|A\cap A_k|}{k^p}<\infty\right\}
\]
(see Proposition \ref{prop-different_critical}).
\end{remark}

\subsection{James' space}

We recall the definition of the James' space. Let $\mathfrak{J}$ consist of all $\w{x}\in c_0$ such that:
\[
\|x\|_{\mathfrak{J}}=\sup\left\{\left(\sum_{i=1}^n \left(x_{p_i}-x_{p_{i-1}}\right)^2\right)^{1/2}:n\in\omega\text{ and }0\leq p_0<p_1<\ldots<p_n\right\}<+\infty.
\]
Then the above formula defines a norm on $\mathfrak{J}$, making it a Banach space. This space is called the James' space. It is known that $\hat{\w{e}}$ is a non-unconditional Schauder basis of $\mathfrak{J}$. In fact, every Schauder basis of $\mathfrak{J}$ is not unconditional. Thus, in our examinations of the James' space, we cannot use Lemma \ref{general-lem2} and have to rely on Lemma \ref{general-lem1} -- this property of $\mathfrak{J}$ makes it particularly interesting in our context. For more on James' space see \cite{AK}.

The following observation will be crucial in our considerations of the James' space.

\begin{lemma}
\label{lem:James}
Let $\hat{\w{a}}$ be simple over $\hat{\w{e}}$ in $\mathfrak{J}$ with the witnesses $D$, $h$ and $(\w{b}_{h(n)})_{n\in D}\subseteq \mathfrak{J}\setminus\{0\}$. If $A\subseteq D$ and $\lim_{n\in A}\frac{1}{\|\w{b}_{h(n)}\|_{\mathfrak{J}}}=0$, then there is $\w{x}\in\mathfrak{J}$ such that: 
\[
A\subseteq A_{\w{x}}=\left\{n\in D: |\w{e}^\star_{h(n)}(\w{x})|\geq\frac{1}{\|\w{b}_{h(n)}\|_{\mathfrak{J}}}\right\}.
\]
\end{lemma}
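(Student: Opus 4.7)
The plan is to construct $\w{x}$ explicitly as a non-negative, non-increasing step function on $\omega$ whose values at the points of $h[A]$ dominate $c_k:=1/\|\w{b}_k\|_{\mathfrak{J}}$. Write $h[A]=\{k_0<k_1<k_2<\ldots\}$. If this set is finite, then the assumption $\lim_{n\in A}c_{h(n)}=0$, together with $c_k>0$ and the fact that $h^{-1}[\{k\}]$ is an interval, forces $A$ itself to be finite (otherwise some fixed $c_{k_j}>0$ would be attained infinitely often along $A$), and then $\w{x}=\sum_{k\in h[A]}c_k\w{e}_k$ trivially belongs to $\mathfrak{J}$. So I assume $h[A]$ is infinite.

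In that case, since every $k_j$ equals $h(n)$ for some $n\in A$, the hypothesis yields $c_{k_j}\to 0$ as $j\to\infty$. Let $\delta_j=\sup_{i\geq j}c_{k_i}$, so that $(\delta_j)$ is non-increasing with $\delta_j\geq c_{k_j}$ and $\delta_j\to 0$. Define $\w{x}\in\IR^\omega$ by
\[
x_k=\delta_j\quad\text{if }k_j\leq k<k_{j+1},\qquad x_k=\delta_0\quad\text{if }k<k_0.
\]
Then $\w{x}\in c_0$ (since $\delta_j\to 0$) and $\w{x}$ is non-negative and non-increasing on all of $\omega$.

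The main point is then to verify $\|\w{x}\|_{\mathfrak{J}}<\infty$. For any $0\leq p_0<p_1<\ldots<p_n$, monotonicity gives $a_i:=x_{p_{i-1}}-x_{p_i}\geq 0$, so
\[
\sum_{i=1}^n(x_{p_i}-x_{p_{i-1}})^2=\sum_{i=1}^n a_i^2\leq\Bigl(\sum_{i=1}^n a_i\Bigr)^2=(x_{p_0}-x_{p_n})^2\leq\delta_0^2,
\]
where the middle inequality uses $a_i\geq 0$ and the last one uses $x_{p_0}\leq x_0=\delta_0$ together with $x_{p_n}\geq 0$. Hence $\w{x}\in\mathfrak{J}$ with $\|\w{x}\|_{\mathfrak{J}}\leq\delta_0$. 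Finally, for any $n\in A$ write $h(n)=k_j$; then $|\w{e}^\star_{h(n)}(\w{x})|=\delta_j\geq c_{k_j}=1/\|\w{b}_{h(n)}\|_{\mathfrak{J}}$, so $A\subseteq A_{\w{x}}$.

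The main obstacle I expect is controlling the James' norm: naive choices such as $\w{x}=\sum_{k\in h[A]}c_k\w{e}_k$ can easily have infinite $\mathfrak{J}$-norm, because isolated spikes at positions $k_j$ each contribute roughly $2c_{k_j}^2$ to the sum of squared increments (one for entering the spike, one for leaving it). Replacing $c_{k_j}$ with its non-increasing envelope $\delta_j$ and extending that value over the entire interval $[k_j,k_{j+1})$ is exactly what removes the spikes, produces a monotone sequence, and enables the telescoping bound above.
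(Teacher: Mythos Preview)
Your proof is correct and follows essentially the same approach as the paper: both construct a non-negative, non-increasing step function dominating $1/\|\w{b}_{h(n)}\|_{\mathfrak{J}}$ at the relevant coordinates and then bound the James norm via the telescoping inequality $\sum_i(x_{p_i}-x_{p_{i-1}})^2\leq(x_{p_0}-x_{p_n})^2$. The only cosmetic difference is that the paper uses fixed thresholds $a,1,\tfrac{1}{2},\tfrac{1}{3},\ldots$ on intervals determined by the level sets $A_k=\{n\in A:1/\|\w{b}_{h(n)}\|>\tfrac{1}{k+1}\}$, whereas you use the running supremum $\delta_j=\sup_{i\geq j}c_{k_i}$ on the natural intervals $[k_j,k_{j+1})$; your version is arguably a bit cleaner.
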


\begin{proof}
The claim is clear if $A\in\Fin$, so assume that $A$ is infinite. Denote:
\[
a=\max\left\{\frac{1}{\|\w{b}_{h(n)}\|_{\mathfrak{J}}}:n\in A\right\}.
\]
Put also: 
\[
A_k=\left\{n\in A:\frac{1}{\|\w{b}_{h(n)}\|_{\mathfrak{J}}}>\frac{1}{k+1}\right\}
\]
for all $k\in\omega$. Observe that $A=\bigcup_{k\in\omega}A_k$ and $\lim_k \max h[A_{k}]=\infty$ (since $A$ is infinite and $h$ is interval-to-one). 

Let $\w{x}$ be such that $x_i=a$ for all $i\leq \max h[A_0]$ and $x_i=\frac{1}{k+1}$ for all $\max h[A_{k}]<i\leq \max h[A_{k+1}]$ and $k\in\omega$. Observe that $\|\w{x}\|_{\mathfrak{J}}=a^2$, as for every $n\in\omega$ and $0\leq p_0<p_1<\ldots<p_n$ we have:
\[
\sum_{i=1}^n \left(x_{p_i}-x_{p_{i-1}}\right)^2\leq\left(\sum_{i=1}^n\left(x_{p_i}-x_{p_{i-1}}\right)\right)^2=\left(x_{p_0}-x_{p_{n}}\right)^2<a^2.
\]
Hence, $\w{x}\in\mathfrak{J}$. We will show that $A\subseteq A_{\w{x}}$. Fix $n\in A$. If $n\in A_0$ then $\frac{1}{\|\w{b}_{h(n)}\|_{\mathfrak{J}}}\leq a$ and $|\w{e}^\star_{h(n)}(\w{x})|=|x_{h(n)}|=a$ (since $h(n)\leq\max h[A_0]$). On the other hand, if $n\in A_{k+1}\setminus A_k$, for some $k\in\omega$, then $\frac{1}{\|\w{b}_{h(n)}\|_{\mathfrak{J}}}\leq \frac{1}{k+1}$ and $|\w{e}^\star_{h(n)}(\w{x})|=|x_{h(n)}|=\frac{1}{k+1}$ (since $h(n)\leq\max h[A_{k+1}]$). Hence, in both cases $n\in A_{\w{x}}$.
\end{proof}

\subsection{Characterization}

Actually, Proposition \ref{0xFin} can be improved:

\begin{proposition}
\label{0xFin-lepsze}
Let $\I$ be an ideal on $\omega$ isomorphic with $\{\emptyset\}\otimes\Fin$. Then there are $\hat{\w{a}}$ and $\hat{\w{a}}'$, both simple over $\hat{\w{e}}$, such that $\CR(c_0,\hat{\w{a}})=\I$ and $\CR(\mathfrak{J},\hat{\w{a}}')=\I$.
\end{proposition}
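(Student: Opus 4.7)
My plan is to construct both $\hat{\w{a}}$ and $\hat{\w{a}}'$ by applying Lemma \ref{lem:any_sequence} twice (once in $c_0$, once in $\mathfrak{J}$) with a common pair of parameters $(\w{t},\w{s})$ cooked up from the partition witnessing $\I \cong \{\emptyset\}\otimes\Fin$.

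First, I would fix a partition $(B_k)_{k\in\omega}$ of $\omega$ into infinite sets such that $\I = \{A\subseteq\omega : |A\cap B_k| < \infty \text{ for every } k\in\omega\}$; any bijection realising the isomorphism produces such a partition from the vertical fibres of $\omega\times\omega$. I would then set $t_j = j$ for every $j$, so that each interval $[t_j,t_{j+1})$ is a singleton, and define $\w{s} = (s_j)$ by $s_j = k+1$ where $k$ is the unique index with $j\in B_k$. Applying Lemma \ref{lem:any_sequence} with $X = c_0$ and $\hat{\w{u}}=\hat{\w{e}}$ produces $\hat{\w{a}}$ simple over $\hat{\w{e}}$ with $D = \omega$, $h(j) = j+1$, and $\|\w{b}_{j+1}\|_{c_0} = s_j$. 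Since $\hat{\w{e}}$ is unconditional in $c_0$, Lemma \ref{general-lem2} identifies $\CR(c_0,\hat{\w{a}})$ with $\{A \subseteq \omega : \sum_{k\in h[A]} \w{e}_k/\|\w{b}_k\|_{c_0} \in c_0\}$. Such a series in $c_0$ converges iff its coefficients tend to zero, so this critical ideal is $\{A : s_j \to \infty \text{ along } j\in A\}$, which by the construction of $\w{s}$ coincides with $\I$ (the condition $s_j \leq K$ means $j \in B_0\cup\ldots\cup B_{K-1}$).

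The main difficulty is the James' space case, where $\hat{\w{e}}$ is not unconditional and Lemma \ref{general-lem2} is unavailable. I would apply Lemma \ref{lem:any_sequence} again, now with $X = \mathfrak{J}$ and $\hat{\w{u}} = \hat{\w{e}}$, to obtain $\hat{\w{a}}'$ simple over $\hat{\w{e}}$ with the same witnesses $D, h$ but with vectors $\w{b}$ satisfying $\|\w{b}_{j+1}\|_{\mathfrak{J}} = s_j$. By Lemma \ref{general-lem1}, $\CR(\mathfrak{J},\hat{\w{a}}')$ is generated by $\Fin$ and the sets $A_{\w{x}} = \{n : |\w{e}^\star_{h(n)}(\w{x})| \geq 1/\|\w{b}_{h(n)}\|_{\mathfrak{J}}\}$ for $\w{x}\in\mathfrak{J}$. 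Each such $A_{\w{x}}$ forces $\|\w{b}_{h(n)}\|_{\mathfrak{J}} \to \infty$ along $n\in A_{\w{x}}$, because $h(n) = n+1 \to\infty$ and $\w{x}\in\mathfrak{J}\subseteq c_0$ together give $\w{e}^\star_{h(n)}(\w{x})\to 0$. Conversely, Lemma \ref{lem:James} manufactures, for any $A$ along which $1/\|\w{b}_{h(n)}\|_{\mathfrak{J}}\to 0$, an explicit vector $\w{x}\in\mathfrak{J}$ with $A\subseteq A_{\w{x}}$. Hence $\CR(\mathfrak{J},\hat{\w{a}}')$ coincides with $\{A : \|\w{b}_{h(n)}\|_{\mathfrak{J}}\to\infty \text{ along } n\in A\} = \{A : s_j \to \infty \text{ along } j\in A\}$, which equals $\I$ by the same computation as in the $c_0$ case.
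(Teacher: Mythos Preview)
Your proof is correct and follows essentially the same approach as the paper's own proof: both fix a partition of $\omega$ coming from the isomorphism, set $t_j=j$ and $s_j=k+1$ for $j$ in the $k$th block, apply Lemma~\ref{lem:any_sequence} separately in $c_0$ and $\mathfrak{J}$, and then compute the critical ideals via Lemma~\ref{general-lem2} (for $c_0$) and via Lemmas~\ref{general-lem1} and~\ref{lem:James} together with $\mathfrak{J}\subseteq c_0$ (for $\mathfrak{J}$).
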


\begin{proof}
Let $g:\omega^2\to\omega$ be the bijection witnessing that $\I$ is isomorphic with $\{\emptyset\}\otimes\Fin$ and denote $X_k=g[\{k\}\times\omega]$ for all $k\in\omega$. Put $t_k=k$ for all $k\in\omega$ and define the sequence $\w{s}$ by:
\[
s_n=k+1\ \Longleftrightarrow\ n\in X_k
\]
for all $n\in\omega$. Apply Lemma \ref{lem:any_sequence} for $\w{t}$, $\w{s}$ and $X=c_0$ ($\w{t}$, $\w{s}$ and $X=\mathfrak{J}$, respectively) to get $\hat{\w{a}}$ ($\hat{\w{a}}'$, respectively) simple over $\hat{\w{e}}$ with the witnesses $D=\omega$, $h:\omega\to\omega$ given by $h(n)=n+1$ and some sequence $(\w{b}_{n+1})_{n\in\omega}$ such that $\|\w{b}_{n+1}\|_{c_0}=s_n$ for all $n\in\omega$ ($(\w{b}'_{n+1})_{n\in\omega}$ such that $\|\w{b}'_{n+1}\|_{\mathfrak{J}}=s_n$ for all $n\in\omega$, respectively).

In the case of the space $c_0$, using Lemma \ref{general-lem2} we get:
\begin{align*}
\CR(c_0,\hat{\w{a}}) & =\left\{A\subseteq\omega: \sum_{n\in A}\frac{\w{e}_{n+1}}{\left\|\w{b}_{n+1}\right\|_{c_0}}\in c_0\right\}=\cr
& =\left\{A\subseteq\omega: \sum_{n\in A}\frac{\w{e}_{n+1}}{s_n}\in c_0\right\}=\cr
& =\left\{A\subseteq\omega: \lim_{n\in A}\frac{1}{s_n}=0\right\}=\I.   
\end{align*}

In the case of the space $\mathfrak{J}$, we cannot use Lemma \ref{general-lem2}; however, from Lemma \ref{general-lem1} we know that $\CR(\mathfrak{J},\hat{\w{a}}')$ is the ideal generated by $\Fin$ and all sets of the form
    \[
    A_{\w{x}}=\left\{n\in \omega: |\w{e}^\star_{n+1}(\w{x})|\geq\frac{1}{\|\w{b}'_{n+1}\|_{\mathfrak{J}}}\right\},
    \]
for $\w{x}\in \mathfrak{J}$. We need to show that $\CR(\mathfrak{J},\hat{\w{a}}')=\I$.

$\CR(\mathfrak{J},\hat{\w{a}}')\subseteq\I$: It suffices to show that $A_{\w{x}}\in\I$ for all $\w{x}\in \mathfrak{J}$, so fix any such $\w{x}$. Since $\w{x}\in \mathfrak{J}\subseteq c_0$, we have: 
\[
\lim_{n\in A_{\w{x}}}\frac{1}{\|\w{b}'_{n+1}\|_{\mathfrak{J}}}=\lim_{n\in A_{\w{x}}}\frac{1}{s_n}=0.
\]
Hence, $A_{\w{x}}\cap X_k\in\Fin$ for all $k\in\omega$, which means that $A_{\w{x}}\in\I$.

$\CR(\mathfrak{J},\hat{\w{a}}')\supseteq\I$: Fix any $A\in\I$. Then
\[
\lim_{n\in A}\frac{1}{\|\w{b}'_{n+1}\|_{\mathfrak{J}}}=\lim_{n\in A}\frac{1}{s_n}=0.
\]
By Lemma \ref{lem:James}, there is  $\w{x}\in \mathfrak{J}$ such that $A\subseteq A_{\w{x}}$, so $A\in \CR(\mathfrak{J},\hat{\w{a}}')$.
\end{proof}

\begin{theorem}
\label{char:c_0}
The following are equivalent for any nontrivial ideal $\I$ on $\omega$:
\begin{itemize}
    \item[(a)] there is $\hat{\w{a}}$ simple over $\hat{\w{e}}$ such that $\CR(c_0,\hat{\w{a}})=\I$,
    \item[(b)] there is $\hat{\w{a}}$ simple over $\hat{\w{e}}$ such that $\CR(\mathfrak{J},\hat{\w{a}})=\I$,
    \item[(c)] $\I$ is isomorphic to one of the following ideals: $\Fin$, $\Fin\oplus\mathcal{P}(\omega)$ or $\{\emptyset\}\otimes\Fin$.
\end{itemize}
\end{theorem}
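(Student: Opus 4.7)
The plan is to establish (c) $\Rightarrow$ (a) and (c) $\Rightarrow$ (b) by explicit construction in three cases, while (a) $\Rightarrow$ (c) and (b) $\Rightarrow$ (c) will follow from a uniform structural description of $\CR(X,\hat{\w{a}})$ for $X\in\{c_0,\mathfrak{J}\}$ together with a case analysis of the sequence of norms $(\|\w{b}_k\|_X)$.

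For the three cases of (c) $\Rightarrow$ (a) and (c) $\Rightarrow$ (b): if $\I=\Fin$, take $\hat{\w{a}}=\hat{\w{e}}$, which is trivially simple over itself (with $D=\emptyset$) and is a $\Fin$-Schauder basis of both $c_0$ and $\mathfrak{J}$; if $\I\cong\{\emptyset\}\otimes\Fin$, invoke Proposition \ref{0xFin-lepsze} directly; if $\I\cong\Fin\oplus\mathcal{P}(\omega)$, fix an infinite and co-infinite $E\subseteq\omega$ with $\I\cong\{A\subseteq\omega:A\cap E\in\Fin\}$ and apply Lemma \ref{lem:any_sequence} separately with $X=c_0$ and $X=\mathfrak{J}$, taking $t_k=k$ and $s_n=1$ for $n\in E$, $s_n=n+1$ otherwise. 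A short computation via Lemma \ref{general-lem2} (for $c_0$) or Lemmas \ref{general-lem1} and \ref{lem:James} (for $\mathfrak{J}$) then yields $\CR(X,\hat{\w{a}})=\{A:A\cap E\in\Fin\}$.

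For (a) $\Rightarrow$ (c) and (b) $\Rightarrow$ (c), I would first establish the uniform description: for any $\hat{\w{a}}$ simple over $\hat{\w{e}}$ with witnesses $D,h,(\w{b}_k)_{k\in h[D]}$, and for both $X=c_0$ and $X=\mathfrak{J}$,
\[
\CR(X,\hat{\w{a}})=\left\{A\subseteq\omega : A\setminus D\in\Fin \text{ and } (c_k)_{k\in h[A\cap D]}\to 0\right\}, \qquad c_k:=1/\|\w{b}_k\|_X.
\]
The $c_0$ version is immediate from Lemma \ref{general-lem2}. For $\mathfrak{J}$, Lemma \ref{general-lem1} presents $\CR(\mathfrak{J},\hat{\w{a}})$ as the ideal generated by $\Fin$ and the sets $A_{\w{x}}$ with $\w{x}\in\mathfrak{J}$; the inclusion $\mathfrak{J}\subseteq c_0$ forces $|\w{e}^\star_k(\w{x})|\to 0$, so on each $A_{\w{x}}$ (where $|\w{e}^\star_{h(n)}(\w{x})|\geq c_{h(n)}$) the values $c_k$ along $h[A_{\w{x}}]$ tend to zero, while Lemma \ref{lem:James} provides the converse, namely that every $A\subseteq D$ with $(c_k)_{k\in h[A]}\to 0$ lies inside some $A_{\w{x}}$ with $\w{x}\in\mathfrak{J}$.

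Finally, I classify ideals of the uniform form. Put $L_n=\{k\in h[D]:c_k\geq 1/n\}$ and $M_n=L_n\setminus L_{n-1}$ (with $L_0=\emptyset$); an elementary argument gives $(c_k)_{k\in B}\to 0$ iff $B\cap M_n\in\Fin$ for every $n$. Three regimes arise. \emph{(i)} All $L_n$ finite: the condition on $A\cap D$ is vacuous, $\I=\{A:A\cap(\omega\setminus D)\in\Fin\}$, and nontriviality gives $\Fin$ or $\Fin\oplus\mathcal{P}(\omega)$ according as $D$ is finite or infinite. \emph{(ii)} Only finitely many $M_n$ are infinite: letting $F$ be their union and using that each $h^{-1}[\{k\}]$ is a finite interval, $\I$ collapses to $\{A:A\cap E\in\Fin\}$ with $E=(\omega\setminus D)\cup h^{-1}[F]$ infinite, again yielding $\Fin$ or $\Fin\oplus\mathcal{P}(\omega)$. \emph{(iii)} Infinitely many $M_n$ are infinite: letting $S=\{n:M_n\text{ infinite}\}$ and $D_n=h^{-1}[M_n]\cap D$, the restriction of $\I$ to $\bigsqcup_{n\in S}D_n$ is isomorphic to $\{\emptyset\}\otimes\Fin$ via a bijection $\omega\times\omega\to\bigsqcup_{n\in S}D_n$ sending the $n$th column to $D_n$, and the remaining pieces on $D\setminus\bigsqcup_{n\in S}D_n$ (unrestricted) and on $\omega\setminus D$ ($\Fin$) are absorbed into $\{\emptyset\}\otimes\Fin$ by Lemma \ref{lem-isomorphisms}, since $\{\emptyset\}\otimes\Fin$ is non-tall and distinct from $\Fin$. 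The main delicate point is precisely this absorption step in regime (iii); without Lemma \ref{lem-isomorphisms} one would otherwise obtain extra shapes like $\{\emptyset\}\otimes\Fin\oplus\mathcal{P}(\omega)$ rather than a single representative. The other care point is the $\mathfrak{J}$ half of the uniform description, where Lemma \ref{lem:James} is essential to avoid analyzing the opaque generating sets $A_{\w{x}}$ directly.
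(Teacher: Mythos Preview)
Your proof is correct and follows essentially the same route as the paper's. The only organisational difference is that you first isolate the uniform description
\[
\CR(X,\hat{\w{a}})=\Bigl\{A\subseteq\omega : A\setminus D\in\Fin \text{ and } \lim_{k\in h[A\cap D]}\tfrac{1}{\|\w{b}_k\|_X}=0\Bigr\}
\]
for both $X=c_0$ and $X=\mathfrak{J}$, and then classify all ideals of this shape via the level sets $L_n$, whereas the paper keeps the two spaces separate and records the needed facts as four observations (i)--(iv) about the partition $B_k=\{n\in D:\tfrac{1}{k+1}\le\|\w{b}_{h(n)}\|_X<\tfrac{1}{k}\}$ of $D$; the two partitions correspond under $h$, and the ensuing case split (your regimes (i)--(iii) versus the paper's four bullet points) is the same. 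Your use of Lemma~\ref{lem:any_sequence} for the $\Fin\oplus\mathcal{P}(\omega)$ case in (c)$\Rightarrow$(a),(b) is a harmless variant of the paper's appeal to Theorem~\ref{thm-distinguishing}. One small imprecision: in that case you should write $\I=\{A:A\cap E\in\Fin\}$ rather than $\I\cong\{A:A\cap E\in\Fin\}$, and briefly note that every ideal on $\omega$ isomorphic to $\Fin\oplus\mathcal{P}(\omega)$ is actually equal to such an $\I_E$ (this follows by reading off $E$ as the range of the $\{0\}\times\omega$-component of the isomorphism, which is finite-to-one since singletons lie in $\I$); the paper leaves this implicit as well.
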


\begin{proof}
(c)$\implies$(a) and (c)$\implies$(b): If $\I\cong\Fin$, then actually $\I=\Fin$, so $\CR(c_0,\hat{\w{e}})=\I$ and $\CR(\mathfrak{J},\hat{\w{e}})=\I$. On the other hand, if $\I\cong \Fin\oplus\mathcal{P}(\omega)$ or $\I\cong\{\emptyset\}\otimes\Fin$, then it suffices to apply Theorem \ref{thm-distinguishing} or Proposition \ref{0xFin-lepsze}, respectively. 

(a)$\implies$(c) and (b)$\implies$(c): Let $X=c_0$ or $X=\mathfrak{J}$. Let $\hat{\w{a}}$ be simple over $\hat{\w{e}}$. Then there are $D\subseteq\omega$, an interval-to-one $h:D\to\omega$ and $(\w{b}_{h(n)})\subseteq X\setminus\{0\}$ such that:
\[
S_n(\hat{\w{u}})(\w{x})-S_n(\hat{\w{a}})(\w{x})=
\begin{cases}
    0, & \text{if }n\in\omega\setminus D,\\
    \w{u}_{h(n)}^\star(\w{x})\w{b}_{h(n)}, & \text{if }n\in D,
\end{cases}
\]
for all $n\in \omega$ and $\w{x}\in X$.

Define $B_0=\{n\in D: \|\w{b}_{h(n)}\|_{X}\geq 1\}$ and $B_i=\{n\in D: \frac{1}{k+1}\leq\|\w{b}_{h(n)}\|_{X}<\frac{1}{k}\}$ for all $n\in\omega\setminus\{0\}$. Then $(B_k)$ is a partition of $D$. Denote $T=\bigcup\{B_k: B_k\in\Fin\}$ and $S=\{k\in\omega: B_k\notin\Fin\}$.

Observe that:
\begin{itemize}
    \item[(i)] $\CR(X,\hat{\w{a}})\restriction \omega\setminus D=\Fin\restriction \omega\setminus D$;
    \item[(ii)] $\CR(X,\hat{\w{a}})\restriction B_k=\Fin\restriction B_k$ for every $k\in\omega$;
    \item[(iii)] if $A\subseteq D$ and $A\cap B_k\in\Fin$ for all $k$, then $A\in \CR(X,\hat{\w{a}})$;
    \item[(iv)] $\CR(X,\hat{\w{a}})\restriction T=\mathcal{P}(T)$.
\end{itemize} 
Indeed, in the case of $X=c_0$ all items follow from Lemma \ref{general-lem2}, while in the case of $X=\mathfrak{J}$, item (ii) follows from the fact that $\mathfrak{J}\subseteq c_0$ and items (iii) and (iv) follow from Lemma \ref{lem:James}.

There are four possibilities:
\begin{itemize}
    \item If $\omega\setminus T\in\Fin$, then $\CR(X,\hat{\w{a}})=\mathcal{P}(\omega)$ (by (iv)).
    \item If $S\in\Fin$ and $T\in\Fin$, then $\CR(X,\hat{\w{a}})\cong\Fin$ (by (i) and (ii)).
    \item If $S\in\Fin$, $T\notin\Fin$, but $\omega\setminus T\notin\Fin$, then $\CR(X,\hat{\w{a}})\cong\Fin\oplus\mathcal{P}(\omega)$ (by (i), (ii) and (iv)).
    \item If $S\notin\Fin$, then $\CR(X,\hat{\w{a}})\cong\{\emptyset\}\otimes\Fin$ (by (ii), (iii) and Lemma \ref{lem-isomorphisms}).
\end{itemize} 
Hence, if $\CR(X,\hat{\w{a}})\cong\I$ and $\I$ is nontrivial, then $\I$ is isomorphic to $\Fin$, $\Fin\oplus\mathcal{P}(\omega)$ or $\{\emptyset\}\otimes\Fin$.
\end{proof}

\begin{remark}
\label{remark}
Suppose that $\hat{\w{a}}$ is simple over $\hat{\w{e}}$. Then $\CR(c_0,\hat{\w{a}})$ and $\CR(\mathfrak{J},\hat{\w{a}})$ are not tall, but they can be non-$\bf{F_\sigma}$ (by Theorem \ref{char:c_0}). On the other hand, for each $1\leq p<\infty$, $\CR(\ell_p,\hat{\w{a}})$ has to be $\bf{F_\sigma}$, but it can be tall (by Theorem \ref{char:lp}).
\end{remark}

\section{A Schauder basis for every non-pathological analytic P-ideal}

Results in this section were inspired  by \cite[Proposition 5.3]{PBNF}.

\begin{theorem}
\label{T:allP}
If $\I$ is a non-pathological analytic P-ideal, then there are a Banach space $X\subseteq \mathbb{R}^\omega$ and $\hat{\w{a}}$ simple over $\hat{\w{e}}$ such that $\CR(X,\hat{\w{a}})=\I$.
\end{theorem}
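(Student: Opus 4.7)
The overall plan is to build a Banach space $X\subseteq\mathbb{R}^\omega$ whose norm is derived from a non-pathological lsc submeasure representing $\I$, and then invoke Lemma~\ref{lem:any_sequence} together with Lemma~\ref{general-lem2} to realize $\I$ as the critical ideal of an appropriate $\hat{\w{a}}$ simple over $\hat{\w{e}}$. By Solecki's theorem, write $\I=\Exh(\phi)$ for some lsc non-pathological submeasure $\phi$. Since Lemma~\ref{lem:any_sequence} applied with $t_k=k$ produces $h(n)=n+1$, a naive use of $\phi$ itself to build $X$ would only recover the one-step shift of $\I$. To absorb this shift, I first replace $\phi$ by the submeasure
\[
\psi(B):=\phi\bigl((B\cap\{1,2,\ldots\})-1\bigr)+\mathbf{1}_{\{0\in B\}},
\]
which is again lsc and non-pathological (the witnessing finitely supported measures are the shifts of those for $\phi$, together with $\delta_0$), and which satisfies $\Exh(\psi)=\{B\subseteq\omega:(B\cap\{1,2,\ldots\})-1\in\I\}$; in particular, $A\in\I$ if and only if $A+1\in\Exh(\psi)$.

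Let $\mathcal{M}'$ be the family of finitely supported measures $\nu\leq\psi$ realizing $\psi=\sup_{\nu\in\mathcal{M}'}\nu$, and set
\[
\|\w{x}\|_X:=\sup_{\nu\in\mathcal{M}'}\sum_{n\in\omega}\nu(\{n\})|x_n|,\qquad X:=\Bigl\{\w{x}\in\mathbb{R}^\omega:\|\w{x}\|_X<\infty\text{ and }\lim_N\|P_{>N}\w{x}\|_X=0\Bigr\},
\]
where $P_{>N}\w{x}:=(0,\ldots,0,x_{N+1},x_{N+2},\ldots)$. I then check that $(X,\|\cdot\|_X)$ is a Banach space (completeness via lsc of $\psi$: any $\|\cdot\|_X$-Cauchy sequence converges coordinatewise to a limit whose $\psi$-based norm is controlled by $\liminf$), that $\hat{\w{e}}$ is an unconditional $\Fin$-Schauder basis of $X$ (unconditionality is immediate from the monotonicity of $\|\cdot\|_X$ in each $|x_n|$, and the $\Fin$-basis property is exactly the tail-decay clause defining $X$), and that $\|\chi_A\|_X=\psi(A)$ for every $A\subseteq\omega$, whence
\[
\chi_A\in X\iff \psi(A\setminus N)\to 0\iff A\in\Exh(\psi).
\]

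Next I apply Lemma~\ref{lem:any_sequence} to $X$ with $t_k=k$ and $s_k=1$ to obtain $\hat{\w{a}}$ simple over $\hat{\w{e}}$ with witnesses $D=\omega$, $h(n)=n+1$ and $\|\w{b}_{n+1}\|_X=1$ for every $n$. Since $\hat{\w{e}}$ is unconditional in $X$, Lemma~\ref{general-lem2} yields
\[
\CR(X,\hat{\w{a}})=\Bigl\{A\subseteq\omega:\sum_{n\in A}\w{e}_{n+1}\in X\Bigr\}=\{A:\chi_{A+1}\in X\}=\{A:A+1\in\Exh(\psi)\}=\I,
\]
which is the required equality.

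The main technical step is verifying that $X$ is a genuine Banach space in which $\hat{\w{e}}$ is a $\Fin$-Schauder basis. Completeness and unconditionality follow from standard arguments tailored to submeasure-based norms, while the basis property is essentially built into the tail clause in the definition of $X$. The shift from $\phi$ to $\psi$ is a small bookkeeping device that could be avoided by constructing $\hat{\w{a}}$ directly with $h=\text{id}$ on a cofinite $D$ in the spirit of Proposition~\ref{prop-different_critical}, but the version above interfaces most cleanly with Lemma~\ref{lem:any_sequence}.
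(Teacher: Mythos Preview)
Your proof is correct and follows essentially the same approach as the paper: build a Banach space from the non-pathological submeasure (in the spirit of \cite{PBNF}) in which $\hat{\w{e}}$ is an unconditional basis, then read off $\Exh(\phi)$ as the critical ideal via Lemma~\ref{general-lem2}; the only differences are cosmetic---you pre-shift the submeasure to $\psi$ and invoke Lemma~\ref{lem:any_sequence} with $s_k\equiv 1$, whereas the paper embeds the shift and a weight $(k+1)$ directly into the norm and constructs $\hat{\w{a}}$ by an explicit recursion on scalars $p_n$. One small omission: you should assume without loss of generality that $\supp(\phi)=\omega$ (as the paper does, by adding a summable perturbation), since otherwise $\psi(\{n\})=\phi(\{n-1\})$ may vanish for some $n\geq 1$ and your $\|\cdot\|_X$ is only a seminorm.
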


\begin{proof}
Since $\I$ is a non-pathological analytic P-ideal, there is a non-pathological lsc submeasure $\phi$ such that $\I=\Exh(\phi)$. Without loss of generality we may assume that $\supp(\phi)=\omega$ (it suffices to consider $\phi'$ given by 
\[
\phi'(A)=\phi(A)+\sum_{n\in A\setminus\supp(\phi)}\frac{1}{2^n},
\]
for all $A\subseteq\omega$ and observe that $\Exh(\phi)=\Exh(\phi')$). 

Let $\mathcal{M}_\phi$ be the family of all measures on $\omega$ such that $\supp(\mu)\in\Fin$ and $\mu(B)\leq\phi(B)$ for all $B\subseteq\omega$. Define:
\[
\|\w{x}\|_\phi=|x_0|+\sup_{\mu\in\mathcal{M}_\phi}\left(\sum_{k\in\supp(\mu)}(k+1)\mu(\{k\})|x_{k+1}|\right)
\]
for all $\w{x}\in\mathbb{R}^\omega$ and let:
\[
X=\left\{x\in\mathbb{R}^\omega: \lim_{n\to\infty}\left\|\w{x}-\sum_{i=0}^n \w{e}_i^\star(\w{x})\w{e}_i\right\|_\phi=0\right\}=\left\{x\in\mathbb{R}^\omega: \lim_{n\to\infty}\left\|\sum_{i>n} \w{e}_i^\star(\w{x})\w{e}_i\right\|_\phi=0\right\}.
\]
Then $\|\cdot\|_\phi$ is a norm on $\mathbb{R}^\omega$ (as $\supp(\phi)=\omega$) making $(X,\|\cdot\|_\phi)$ a Banach space. Moreover, $\hat{\w{e}}$ is an unconditional Schauder basis of $X$ (by \cite[Proposition 3.1.3]{AK}, since if $|x_n|\leq|y_n|$ for all $n$, then $\|\w{x}\|_\phi\leq\|\w{y}\|_\phi$).

For each $n\in\omega$ inductively find $p_n\in\mathbb{R}$ such that $\|\sum_{i=0}^n p_i\w{e}_i\|_\phi=n+1$ and let $\hat{\w{a}}=\left(\w{a}_n,\w{a}_n^\star\right)_n$ be given by $\w{a}_n = \sum_{i=0}^n p_i\w{e}_i $ and $\w{a}_n^\star=\frac{\w{e}_n^\star}{p_n} - \frac{\w{e}_{n+1}^\star}{p_{n+1}}$ for all $n\in\omega$. Clearly, $\w{a}^\star_n(\w{a}_m)=\delta_{nm}$ for all $n,m\in\omega$.

Observe that:
\[
S_n(\hat{\w{e}})(\w{x})-S_n(\hat{\w{a}})(\w{x})=\w{e}_{n+1}^\star(\w{x})\w{a}_{n},
\]
for every $\w{x}\in X$. In particular, $\hat{\w{a}}$ is simple over $\hat{\w{e}}$ with $h=\on{id}+1$ witnessing it. Hence, applying Lemma \ref{general-lem2}, we have:
\begin{align*}
\CR(X,\hat{\w{a}}) & =\left\{A\subseteq\omega: \sum_{k\in A}\frac{\w{e}_{k+1}}{\left\|\w{a}_k\right\|_\phi}\in X\right\}=\left\{A\subseteq\omega: \lim_{n\to\infty}\left\|\sum_{k\in A\setminus n} \frac{\w{e}_{k+1}}{k+1}\right\|_\phi=0\right\}=\cr
 & =\left\{A\subseteq\omega: \lim_{n\to\infty}\sup_{\mu\in\mathcal{M}_\phi}\left(\sum_{k\in\supp(\mu)\cap(A\setminus n)}(k+1)\mu(\{k\})\frac{1}{k+1}\right) =0\right\}=\cr
 & =\left\{A\subseteq\omega: \lim_{n\to\infty}\sup_{\mu\in\mathcal{M}_\phi}\mu(A\setminus n) =0\right\}= \left\{A\subseteq\omega: \lim_n\phi(A\setminus n) =0\right\}=\Exh(\phi)=\I.  
\end{align*}
\end{proof}

\begin{corollary}
There are a Banach space $X\subseteq \mathbb{R}^\omega$ and $\hat{\w{a}}$ simple over $\hat{\w{e}}$ such that $\CR(X,\hat{\w{a}})=\I_d$. In particular,  $\CR(X,\hat{\w{a}})\not\cong\CR(c_0,\hat{\w{w}})$ and $\CR(X,\hat{\w{a}})\not\cong\CR(\ell_p,\hat{\w{w}})$, for all $\hat{\w{w}}$ simple over $\hat{\w{e}}$ and all $1\leq p<\infty$.
\end{corollary}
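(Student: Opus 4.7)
The statement has two parts, both of which I would handle directly from the results already established in the paper. The first existence claim is essentially an application of Theorem \ref{T:allP}, while the non-isomorphism claims will follow from the structural restrictions on critical ideals proved in Theorems \ref{char:c_0} and \ref{char:lp}, combined with the descriptive/combinatorial properties of $\I_d$ recorded in the Example.

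For the existence part, my plan is to observe that $\I_d$ is a non-pathological analytic P-ideal: it is listed in the Example as $\bf{F_{\sigma\delta}}$ and a P-ideal, and it is classically a density ideal of the form $\Exh(\phi)$ for $\phi(A)=\sup_n |A\cap(n+1)|/(n+1)$, which is non-pathological (at each finite level, the supremum defining $\phi$ is witnessed by a normalised counting measure supported on an initial segment, so $\phi$ is attained by measures in $\mathcal{M}_\phi$). Hence Theorem \ref{T:allP} yields a Banach space $X\subseteq\mathbb{R}^\omega$ and an $\hat{\w{a}}$ simple over $\hat{\w{e}}$ with $\CR(X,\hat{\w{a}})=\I_d$.

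For the non-isomorphism part, I would exploit two invariants preserved under isomorphism of ideals: tallness and descriptive complexity. By Theorem \ref{char:c_0}, every $\CR(c_0,\hat{\w{w}})$ with $\hat{\w{w}}$ simple over $\hat{\w{e}}$ is isomorphic to one of $\Fin$, $\Fin\oplus\mathcal{P}(\omega)$, $\{\emptyset\}\otimes\Fin$, none of which is tall. Since $\I_d$ is tall (stated in the Example), it cannot be isomorphic to any of these, so $\CR(X,\hat{\w{a}})\not\cong\CR(c_0,\hat{\w{w}})$. Similarly, by Theorem \ref{char:lp} (or directly Remark \ref{remark}), every $\CR(\ell_p,\hat{\w{w}})$ with $\hat{\w{w}}$ simple over $\hat{\w{e}}$ is of the form $\I_{\w{r}}^{\w{t}}$, and such ideals are $\bf{F_\sigma}$ (the same preimage-under-a-continuous-map argument as in the proof of Corollary \ref{cor:non-summable} applies). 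Since $\I_d$ is not $\bf{F_\sigma}$ (Example) and isomorphic ideals share descriptive complexity, $\CR(X,\hat{\w{a}})\not\cong\CR(\ell_p,\hat{\w{w}})$ for any $1\leq p<\infty$.

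The only mildly subtle point is the non-pathologicality of the density submeasure — everything else is a direct appeal to previously proved characterisations. I would therefore either cite non-pathologicality from the literature on density ideals, or spell out briefly that the supremum defining $\phi$ is actually a maximum attained by probability measures uniformly distributed on initial segments, which are in $\mathcal{M}_\phi$ and hence witness non-pathologicality.
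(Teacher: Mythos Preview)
Your proposal is correct and follows essentially the same approach as the paper: apply Theorem \ref{T:allP} using non-pathologicality of the density submeasure (the paper writes $\I_d=\Exh(\sup_n\mu_n)$ with $\mu_n(A)=|A\cap(n+1)|/(n+1)$, which is exactly your justification), then use tallness of $\I_d$ versus non-tallness of $\CR(c_0,\hat{\w{w}})$ and the non-$\bf{F_\sigma}$ complexity of $\I_d$ versus the $\bf{F_\sigma}$ complexity of $\CR(\ell_p,\hat{\w{w}})$. The only cosmetic difference is that the paper cites Remark \ref{remark} for both structural facts, whereas you go back to Theorems \ref{char:c_0} and \ref{char:lp}; these are equivalent.
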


\begin{proof}
The existence of $X\subseteq \mathbb{R}^\omega$ and $\hat{\w{a}}$ simple over $\hat{\w{e}}$ such that $\CR(X,\hat{\w{a}})=\I_d$ follows from Theorem \ref{T:allP}, since $\I_d$ is non-pathological ($\I_d=\Exh(\sup_{n\in\omega}\mu_n)$ for measures $\mu_n:\mathcal{P}(\omega)\to[0,\infty)$ given by $\mu_n(A)=\frac{|A\cap(n+1)|}{n+1}$, for all $n\in\omega$ and $A\subseteq\omega$).

The fact that $\I_d\not\cong\CR(\ell_p,\hat{\w{w}})$, for all $\hat{\w{w}}$ simple over $\hat{\w{e}}$ and all $1\leq p<\infty$, follows now from Remark \ref{remark}, since $\I_d$ is not $\bf{F_\sigma}$.

To see that $\I_d\not\cong\CR(c_0,\hat{\w{w}})$, for all $\hat{\w{w}}$ simple over $\hat{\w{e}}$, note that $\CR(c_0,\hat{\w{w}})$ is not tall (by Remark \ref{remark}), while $\I_d$ is tall. 
\end{proof}

\section{Avoiding $c_{00}$}\label{S:avoidingC}

Examples considered so far were based on vectors being elements of $c_{00}$. In this Section, we propose an alternative approach, which involves introducing more zeros at the beginning of the sequence under consideration. As the case of James’ space appears more technical, we restrict our discussion in this Section to the spaces $c_0$ and $\ell_p$.

We also use this section to present alternative method of proving $\w{a}^\star_n(\w{a}_m)=\delta_{nm}.$ Observe that the condition follows from $\w{x}=\sum_{\I,n} \w{a}^\star_n(\w{x})\w{a}_n$ and the existence of some sequence of continuous functionals $\w{b}_n^\star$ such that $\w{a}_n\in\on{Ker}\w{b}^\star_m$ iff $n\neq m$ (since the latter condition is equivalent to $\w{a}_n\notin \overline{\on{span}\{\w{a}_m:n\neq n\}}$ for all $n$). Within this section it will be enough to consider $\w{b}^\star_n=\w{e}_n^\star$ in all examples except \ref{ex:simpoversimp}. 

\begin{definition}\label{d:Avoidc0}
For $\w{z}\in(0,\infty)^\omega$ we define $\w{a}_n(\w{z})=\w{z}\cdot \chi_{[n,\infty)}=\sum_{k=n}^\infty z_n\w{e}_k$ for all $n\in\omega$ and set $\w{a}_0^\star(\w{z})=\frac{1}{z_0}\w{e}_0^\star$ and $\w{a}_n^\star(\w{z})=\frac{1}{z_n}\w{e}_n^\star - \frac{1}{z_{n-1}}\w{e}_{n-1}^\star$ for all $n\geq 1$.
\end{definition}

Note that in this case, pointwise convergence determines the form of the functionals $\w{a}_n^\star(\w{z})$.

\begin{lemma}
\label{lem:z}
Let $\w{z}\in(0,\infty)^\omega$. Then $\hat{\w{a}}(\w{z})$ is simple over $\hat{\w{e}}$ with witnesses $D=\omega$, $h=\on{id}$ and $\w{b}_n=-\sum_{m>n}\frac{z_m}{z_n}\w{e}_m$ for all $n\in\omega$. 
\end{lemma}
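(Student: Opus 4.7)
The plan is to verify the claim by a direct telescoping computation in $\mathbb{R}^\omega$, coordinate by coordinate. First, I would check the biorthogonality $\w{a}_n^\star(\w{z})(\w{a}_m(\w{z})) = \delta_{nm}$ that is implicit in the notation $\hat{\w{a}}(\w{z})$: since $(\w{a}_m(\w{z}))_k = z_k$ when $k \geq m$ and vanishes otherwise, for $n \geq 1$ the functional $\w{a}_n^\star(\w{z})$ evaluated at $\w{a}_m(\w{z})$ equals the difference of two indicator values (at $k=n$ versus $k=n-1$), which is nonzero precisely when $n=m$; the case $n=0$ is immediate.

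Next, I would fix $\w{x}$ in the ambient space and compute the partial sum $S_n(\hat{\w{a}}(\w{z}))(\w{x})$ coordinate by coordinate. Using $(\w{a}_i(\w{z}))_k = z_k$ if $k\ge i$ and $0$ otherwise,
\[
\left(S_n(\hat{\w{a}}(\w{z}))(\w{x})\right)_k \;=\; \sum_{i=0}^{\min(n,k)}\w{a}_i^\star(\w{z})(\w{x})\cdot z_k \;=\; z_k\sum_{i=0}^{\min(n,k)}\w{a}_i^\star(\w{z})(\w{x}),
\]
and the inner sum telescopes thanks to the form $\w{a}_i^\star(\w{z}) = \frac{1}{z_i}\w{e}_i^\star - \frac{1}{z_{i-1}}\w{e}_{i-1}^\star$: for $k\le n$ it collapses to $x_k/z_k$, yielding $(S_n(\hat{\w{a}}(\w{z}))(\w{x}))_k = x_k$, while for $k>n$ it collapses to $x_n/z_n$, yielding $(S_n(\hat{\w{a}}(\w{z}))(\w{x}))_k = x_n z_k/z_n$. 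Combining the two cases gives
\[
S_n(\hat{\w{a}}(\w{z}))(\w{x}) \;=\; \sum_{k=0}^n x_k\,\w{e}_k \;+\; \frac{x_n}{z_n}\sum_{m>n}z_m\,\w{e}_m.
\]

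Subtracting this from $S_n(\hat{\w{e}})(\w{x})=\sum_{k=0}^n x_k\,\w{e}_k$ I obtain
\[
S_n(\hat{\w{e}})(\w{x})-S_n(\hat{\w{a}}(\w{z}))(\w{x}) \;=\; -\frac{x_n}{z_n}\sum_{m>n}z_m\,\w{e}_m \;=\; \w{e}_n^\star(\w{x})\cdot\w{b}_n,
\]
with $\w{b}_n = -\sum_{m>n}\frac{z_m}{z_n}\w{e}_m$. This is exactly the identity demanded by Definition \ref{def-simple} with $D=\omega$ and $h=\on{id}$, and every $\w{b}_n$ is nonzero since all $z_m>0$.

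There is no real obstacle here; the whole statement is a bookkeeping exercise around the telescoping sum. The only point worth emphasizing is that the vectors $\w{a}_i(\w{z})$ and $\w{b}_n$ have infinite support, so everything is computed coordinatewise in $\mathbb{R}^\omega$ rather than inside $c_{00}$. This is precisely the phenomenon the section is designed to exploit, and whenever the lemma is later applied in a concrete Banach space $X$ (such as $c_0$ or $\ell_p$) one will additionally need the chosen $\w{z}$ to guarantee $\w{b}_n\in X$.
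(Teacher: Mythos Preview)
Your proof is correct and follows essentially the same route as the paper: both arguments compute $S_n(\hat{\w{a}}(\w{z}))(\w{x})$ coordinate by coordinate, exploit the telescoping structure of the functionals $\w{a}_i^\star(\w{z})$, and read off the identity $S_n(\hat{\w{e}})(\w{x})-S_n(\hat{\w{a}}(\w{z}))(\w{x})=\w{e}_n^\star(\w{x})\w{b}_n$. Your additional verification of biorthogonality and the closing remarks about infinite supports are fine but not needed for the lemma itself.
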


\begin{proof}
Observe that if $\w{x}\in\mathbb{R}^\omega$, then for $m\leq n$ we get 
$\w{e}_m^\star(S_n(\hat{\w{a}}(\w{z}))(\w{x}))=\w{e}_m^\star(\w{x})$, while for $m>n$ we have: 
\begin{align*}
\w{e}_m^\star\left(S_n(\hat{\w{a}}(\w{z}))(\w{x})\right) & =\frac{\w{e}_0^\star(\w{x})z_m}{z_0}+\sum_{i=1}^n \left(\frac{1}{z_i}\w{e}_i^\star(\w{x}) - \frac{1}{z_{i-1}}\w{e}_{i-1}^\star(\w{x})\right)z_m=\cr
& =\left(\frac{\w{e}_0^\star(\w{x})}{z_0}+\sum_{i=1}^n \left(\frac{1}{z_{i}}\w{e}_i^\star(\w{x}) - \frac{1}{z_{i-1}}\w{e}_{i-1}^\star(\w{x})\right) \right)z_m=\frac{z_m\w{e}^\star_n(\w{x})}{z_n}.
\end{align*}
Hence:
\[
S_n(\hat{\w{e}})(\w{x})-S_n(\hat{\w{a}}(\w{z}))(\w{x})=-\w{e}^\star_n(\w{x})\sum_{m>n} \frac{z_m}{z_n} \w{e}_m. 
\]
In particular, $\hat{\w{a}}(\w{z})$ is simple over $\hat{\w{e}}$ as witnessed by $D=\omega$, $h=\on{id}$ and $\w{b}_n=-\sum_{m>n}\frac{z_m}{z_n}\w{e}_m$. 
\end{proof}

The first specific example of this approach will be $\w{z}=(\frac{1}{n+1})_{n \in \omega}$. Note that $\w{z}\notin\ell_1$, so also $\w{a}_n(\w{z})\notin \ell_1$ for all $n\in\omega$ and we cannot consider $\hat{\w{a}}(\w{z})$ in the space $\ell_1$.

\begin{proposition}
Let $\w{z}=(\frac{1}{n+1})_{n \in \omega}$. Then $\CR(c_0,\hat{\w{a}}(\w{z}))=\CR(\mathfrak{J},\hat{\w{a}}(\w{z}))=\Fin$ and $\CR(\ell_p,\hat{\w{a}}(\w{z}))=\I_{1/n}$ for all $1<p<\infty$.
\end{proposition}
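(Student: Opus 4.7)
The plan is to first invoke Lemma \ref{lem:z} to identify $\hat{\w{a}}(\w{z})$ as simple over $\hat{\w{e}}$ with witnesses $D=\omega$, $h=\on{id}$ and
\[
\w{b}_n=-\sum_{m>n}\frac{n+1}{m+1}\w{e}_m,
\]
for all $n\in\omega$. All three critical ideals will then be read off from the norms $\|\w{b}_n\|_X$ for $X\in\{c_0,\ell_p,\mathfrak{J}\}$: in $c_0$ and $\ell_p$ the basis $\hat{\w{e}}$ is unconditional so Lemma \ref{general-lem2} applies, while in $\mathfrak{J}$ we must use the weaker Lemma \ref{general-lem1}.

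For $c_0$, a direct computation gives $\|\w{b}_n\|_{c_0}=\frac{n+1}{n+2}$ (the supremum attained at $m=n+1$), so $1/\|\w{b}_n\|_{c_0}=\frac{n+2}{n+1}\to 1$. By Lemma \ref{general-lem2}, $A\in\CR(c_0,\hat{\w{a}}(\w{z}))$ amounts to $\sum_{n\in A}\w{e}_n/\|\w{b}_n\|_{c_0}\in c_0$, which forces $A$ to be finite. For $\ell_p$ with $1<p<\infty$, an integral comparison yields
\[
\|\w{b}_n\|_{\ell_p}^p=(n+1)^p\sum_{m>n}\frac{1}{(m+1)^p}\sim\frac{n+1}{p-1},
\]
so the condition $\sum_{n\in A}\|\w{b}_n\|_{\ell_p}^{-p}<\infty$ is equivalent to $\sum_{n\in A}1/(n+1)<\infty$, giving $\CR(\ell_p,\hat{\w{a}}(\w{z}))=\I_{1/n}$.

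For $\mathfrak{J}$, Lemma \ref{general-lem1} describes $\CR(\mathfrak{J},\hat{\w{a}}(\w{z}))$ as the ideal generated by $\Fin$ and the sets $A_{\w{x}}=\{n:|x_n|\geq 1/\|\w{b}_n\|_\mathfrak{J}\}$, $\w{x}\in\mathfrak{J}$. The plan is to show that $\|\w{b}_n\|_\mathfrak{J}$ is uniformly bounded above by some constant $M$. Since $\mathfrak{J}\subseteq c_0$, every $\w{x}\in\mathfrak{J}$ has $x_n\to 0$, hence $A_{\w{x}}\subseteq\{n:|x_n|\geq 1/M\}$ is finite, yielding $\CR(\mathfrak{J},\hat{\w{a}}(\w{z}))=\Fin$.

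The main obstacle is the James'-norm estimate. Examining $\w{b}_n$ coordinate-wise, its values are $0$ up to index $n$, drop to the minimum $-\frac{n+1}{n+2}$ at index $n+1$, and then increase monotonically back to $0$. Given any $0\leq p_0<p_1<\ldots<p_k$, we split the indices into those $\leq n$ (contributing $0$), at most one ``jump'' index crossing from the zero region into the negative region (contributing at most $(\tfrac{n+1}{n+2})^2$), and the remaining indices $>n+1$, where $\w{b}_n$ is monotone; there the inequality $\sum a_i^2\leq(\sum a_i)^2$ for non-negative $a_i$ bounds the contribution by the squared total variation, again at most $(\tfrac{n+1}{n+2})^2$. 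Hence $\|\w{b}_n\|_\mathfrak{J}^2\leq 2(\tfrac{n+1}{n+2})^2<2$, which supplies the required uniform bound.
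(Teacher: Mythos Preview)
Your proof is correct and follows essentially the same route as the paper: invoke Lemma~\ref{lem:z}, compute $\|\w{b}_n\|_X$ in each space, and then apply Lemma~\ref{general-lem2} for $c_0$ and $\ell_p$ and Lemma~\ref{general-lem1} for $\mathfrak{J}$. The only difference is that the paper simply asserts the exact value $\|\w{b}_n\|_{\mathfrak{J}}=\sqrt{2}\,\tfrac{n+1}{n+2}$, whereas you supply the monotonicity argument giving the upper bound $\|\w{b}_n\|_{\mathfrak{J}}^2\leq 2\bigl(\tfrac{n+1}{n+2}\bigr)^2<2$; either suffices to conclude that every $A_{\w{x}}$ is finite.
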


\begin{proof}
Let $(\w{b}_n)$ be as in Lemma \ref{lem:z}.

Note that $\|\w{b}_n\|_{c_0}=\|-\sum_{m>n}\frac{z_m}{z_n}\w{e}_m\|_{c_0}=\frac{n+1}{n+2}$ for all $n\in\omega$. Thus, using Lemmas \ref{general-lem2} and \ref{lem:z}, we have:
\[
\CR(c_0,\hat{\w{a}}(\w{z}))=\left\{A\subseteq\omega:\sum_{n\in A}\frac{\w{e}_n}{\|\w{b}_n\|_{c_0}}\in c_0\right\}=\left\{A\subseteq\omega:\sum_{n\in A}\frac{n+2}{n+1}\w{e}_n\in c_0\right\}=\Fin.
\]

Similarly, since $\|\w{b}_n\|_{\mf{J}}=\|-\sum_{m>n}\frac{z_m}{z_n}\w{e}_m\|_{\mf{J}}=\sqrt{2}\frac{n+1}{n+2}$ for all $n\in\omega$, using Lemmas \ref{general-lem1} and \ref{lem:z} we get $\CR(\mathfrak{J},\hat{\w{a}}(\w{z}))=\Fin$ (as $A_{\w{x}}$ is finite for every $\w{x}\in\mf{J}$).

If now $1<p<\infty$, then:
\[
\|\w{b}_n\|_{\ell_p}=\left\|-\sum_{m>n}\frac{z_m}{z_n}\w{e}_m\right\|_{\ell_p}=(n+1)\left(\sum_{m>n}\frac{1}{(m+1)^p}\right)^{1/p}.
\]
Estimating $\sum_{m>n}\frac{1}{(m+1)^p}$ by $\int_{n+1}^\infty \frac{1}{x^p} dx $ yields $\frac{(n+1)^{1-p}}{p-1}$, hence we get:
\[
\|\w{b}_n\|_{\ell_p}\approx(n+1)\left(\frac{(n+1)^{1-p}}{p-1}\right)^{1/p}=\left(\frac{n+1}{p-1}\right)^{1/p}.
\]
Using Lemmas \ref{general-lem2} and \ref{lem:z} once again, we have:
\[
\CR(\ell_p,\hat{\w{a}}(\w{z}))=\left\{A\subseteq\omega: \sum_{n\in A}\frac{\w{e}_n}{\|\w{b}_n\|_{\ell_p}}\in\ell_p\right\}=\left\{A\subseteq\omega: \sum_{n\in A}\frac{p-1}{n+1}<\infty\right\}=\I_{1/n}.
\]
\end{proof}

Let us now switch to the case of arbitrary $\w{z}\in (0,\infty)^\omega\cap X$, and firstly consider it within the space $X=c_0$. 

\begin{proposition}
\label{z:c_0}
Let $\w{z}\in (0,\infty)^\omega\cap c_0$. Then 
\[
\CR(c_0, \hat{\w{a}}(\w{z}))=\left\{A\subseteq\omega: \lim_{n\in A}\frac{1}{\alpha_n(\w{z})}=0\right\},
\]
where $\alpha_n(\w{z})=\sup_{m>n}\frac{z_m}{z_n}$ for all $n\in\omega$. Moreover:
\begin{itemize}
    \item[(a)] if  $(\alpha_n(\w{z}))$ is bounded (in particular, if $\w{z}$ is monotone), then $\CR(c_0,\hat{\w{a}}(\w{z}))=\Fin$;
    \item[(b)] if there is an infinite $E\subseteq\omega$ such that $(\alpha_n(\w{z}))_{n\in\omega\setminus E}$ is bounded and $\lim_{n\in E}\alpha_n(\w{z})=\infty$, then $\CR(c_0,\hat{\w{a}}(\w{z}))$ is the ideal generated by $\Fin\cup\{E\}$; 
    \item[(c)] if there is a partition $(D_k)$ of $\omega$ into infinite sets such that $\lim_{k\to\infty}\inf_{n \in D_k} \alpha_n(\w{z})=\infty$ and $\sup_{n \in D_k} \alpha_n(\w{z}) < \infty$ for every $k\in\omega$, then:
    \[
    \CR(c_0,\hat{\w{a}}(\w{z}))=\{A\subseteq\omega:A\cap D_k\in\Fin\text{ for all }k\in\omega\}.\]
\end{itemize}
\end{proposition}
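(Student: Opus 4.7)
The plan is to apply Lemma \ref{lem:z} to obtain explicit witnesses of simplicity, then use the unconditionality of $\hat{\w{e}}$ in $c_0$ together with Lemma \ref{general-lem2} to reduce the question to convergence of a concrete series in $c_0$. By Lemma \ref{lem:z}, $\hat{\w{a}}(\w{z})$ is simple over $\hat{\w{e}}$ with $D=\omega$, $h=\on{id}$ and $\w{b}_n=-\sum_{m>n}\frac{z_m}{z_n}\w{e}_m$. Since $\w{z}\in c_0$ and $z_n>0$, the sequence $(z_m/z_n)_{m>n}$ is bounded and tends to $0$, so $\w{b}_n\in c_0$ and $\|\w{b}_n\|_{c_0}=\sup_{m>n}\tfrac{z_m}{z_n}=\alpha_n(\w{z})<\infty$. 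Lemma \ref{general-lem2} then yields
\[
\CR(c_0,\hat{\w{a}}(\w{z}))=\left\{A\subseteq\omega:\sum_{n\in A}\frac{\w{e}_n}{\alpha_n(\w{z})}\in c_0\right\},
\]
and a series $\sum_{n\in A}c_n\w{e}_n$ converges in $c_0$ precisely when $\{n\in A:|c_n|\geq\varepsilon\}$ is finite for every $\varepsilon>0$, i.e.\ when $\lim_{n\in A}c_n=0$. Applied with $c_n=1/\alpha_n(\w{z})$, this gives the main formula.

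For (a), if $(\alpha_n(\w{z}))$ is bounded above by some $M<\infty$, then $1/\alpha_n(\w{z})\geq 1/M>0$, so $\lim_{n\in A}1/\alpha_n(\w{z})=0$ forces $A\in\Fin$. For a monotone $\w{z}\in c_0\cap(0,\infty)^\omega$ one observes that $\w{z}$ must be decreasing (its limit is zero and its values are positive), hence $z_m\leq z_n$ for $m>n$ and $\alpha_n(\w{z})\leq 1$, reducing to the bounded case.

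For (b), any $A\in\CR(c_0,\hat{\w{a}}(\w{z}))$ must satisfy $A\setminus E\in\Fin$, since $(\alpha_n(\w{z}))_{n\notin E}$ is bounded and the argument of (a) applies on $\omega\setminus E$. Conversely, the hypothesis $\lim_{n\in E}\alpha_n(\w{z})=\infty$ means $\lim_{n\in E}1/\alpha_n(\w{z})=0$, so every set $A$ with $A\setminus E\in\Fin$ satisfies $\lim_{n\in A}1/\alpha_n(\w{z})=0$, hence belongs to $\CR(c_0,\hat{\w{a}}(\w{z}))$. Thus $\CR(c_0,\hat{\w{a}}(\w{z}))$ is exactly the ideal generated by $\Fin\cup\{E\}$.

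For (c), if $A\in\CR(c_0,\hat{\w{a}}(\w{z}))$ then, since $\alpha_n(\w{z})$ is bounded on each $D_k$, the argument of (a) applied inside $D_k$ gives $A\cap D_k\in\Fin$. Conversely, assume $A\cap D_k\in\Fin$ for every $k$ and fix $\varepsilon>0$; using $\lim_{k\to\infty}\inf_{n\in D_k}\alpha_n(\w{z})=\infty$, choose $K$ with $\inf_{n\in D_k}\alpha_n(\w{z})\geq 1/\varepsilon$ for every $k\geq K$, and observe that
\[
\{n\in A:1/\alpha_n(\w{z})\geq\varepsilon\}\subseteq\bigcup_{k<K}(A\cap D_k)\in\Fin,
\]
so $A\in\CR(c_0,\hat{\w{a}}(\w{z}))$. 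The proof is essentially a routine unwinding of the previously established lemmas; the only care needed is in correctly identifying $\|\w{b}_n\|_{c_0}=\alpha_n(\w{z})$ and in translating convergence in $c_0$ into the pointwise ``coefficients vanish along $A$'' condition, so no genuine obstacle is anticipated.
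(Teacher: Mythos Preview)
Your proof is correct and follows exactly the same approach as the paper: apply Lemma~\ref{lem:z} to identify the witnesses and compute $\|\w{b}_n\|_{c_0}=\alpha_n(\w{z})$, then invoke Lemma~\ref{general-lem2} to obtain the main formula. The paper dismisses items (a)--(c) as ``clear'', whereas you spell them out in full; your added detail is accurate and the arguments are sound.
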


\begin{proof}
Let $(\w{b}_n)$ be as in Lemma \ref{lem:z}. Since $\|\w{b}_n\|_{c_0}=\alpha_n(\w{z})$ for all $n\in\omega$, using Lemmas \ref{general-lem2} and \ref{lem:z}, we have:
\[
\CR(c_0,\hat{\w{a}}(\w{z}))=\left\{A\subseteq\omega:\sum_{n\in A}\frac{\w{e}_n}{\|\w{b}_n\|_{c_0}}\in c_0\right\}=\left\{A\subseteq\omega: \lim_{n\in A}\frac{1}{\alpha_n(\w{z})}=0\right\}.
\]
Items (a), (b) and (c) are clear.
\end{proof}

\begin{lemma}
\label{L:alphy}
A sequence $(y_n) \in(0,\infty)^\omega$ is of the form $(\alpha_n(\w{z}))$ for some $\w{z}\in (0,\infty)^\omega\cap c_0$ if and only if $\prod \{y_n : y_n <1\}=0$.
\end{lemma}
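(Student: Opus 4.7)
My plan is to reformulate everything in terms of the running supremum $M_n := \sup_{m \geq n} z_m$. This will turn $\w{z} \in c_0$ into $M_n \to 0$ and rewrites $\alpha_n(\w{z}) = M_{n+1}/z_n$. Using the identity $M_n = \max(z_n, M_{n+1})$, a short case analysis should yield the key relation
\[
\frac{M_{n+1}}{M_n} = \begin{cases} y_n, & \text{if } y_n < 1, \\ 1, & \text{if } y_n \geq 1, \end{cases}
\]
so the running supremum strictly decreases exactly at the indices $n \in E := \{n : y_n < 1\}$, and by the factor $y_n$ at each such step. Both directions will then reduce to telescoping this identity.

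For the forward implication, I would first argue that $E$ must be infinite: if not, $M_n$ is eventually constant, and since $M_n \to 0$ the constant must be $0$, contradicting $M_n \geq z_n > 0$. Telescoping the ratios then gives
\[
\prod_{n \in E,\, n \leq N} y_n \;=\; \prod_{n=0}^N \frac{M_{n+1}}{M_n} \;=\; \frac{M_{N+1}}{M_0},
\]
and letting $N \to \infty$ shows $\prod\{y_n : y_n < 1\} = 0$.

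For the backward implication, I reverse-engineer the recipe. The hypothesis first forces $E$ to be infinite (any nonempty finite product of positive reals is positive, and the empty product is $1$); then I set $M_n := \prod_{k \in E,\, k < n} y_k$, which tends to $0$ by assumption, and define $z_n := M_n / \max(1, y_n)$. Routine checks will give $z_n > 0$ and $z_n \leq M_n \to 0$, so $\w{z} \in (0,\infty)^\omega \cap c_0$. The same two-case computation as above will identify $y_n z_n$ with $M_{n+1}$, reducing the desired equality $\alpha_n(\w{z}) = y_n$ to the single claim $\sup_{m > n} z_m = M_{n+1}$.

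The main obstacle I expect is verifying that this sup is actually attained rather than merely bounded above. The inequality $\sup_{m > n} z_m \leq M_{n+1}$ is immediate from $z_m \leq M_m \leq M_{n+1}$ by monotonicity of $(M_m)$. For the reverse inequality, the trick is to pick the correct witness: take $m^* := \min(E \cap (n, \infty))$, which exists precisely because $E$ is infinite. Then $m^* \in E$ gives $\max(1, y_{m^*}) = 1$, while $E \cap [n+1, m^* - 1] = \emptyset$ keeps $M_{m^*} = M_{n+1}$, so $z_{m^*} = M_{n+1}$ and the witness is found.
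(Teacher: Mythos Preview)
Your proof is correct, and the underlying construction is the same as the paper's: in both cases the sequence $\w{z}$ is defined so that its running maximum drops exactly at the indices in $E=\{n:y_n<1\}$, by the factor $y_n$ at each such step. The difference is purely organizational: the paper defines $\w{z}$ piecewise on $E$ and on the gaps between consecutive elements of $E$, whereas your reformulation via $M_n=\sup_{m\geq n}z_m$ gives the uniform formula $z_n=M_n/\max(1,y_n)$ and makes the telescoping identity $\prod_{n\leq N}M_{n+1}/M_n=M_{N+1}/M_0$ do all the work in both directions. This buys a cleaner and more symmetric presentation, but the two arguments produce literally the same witness $\w{z}$.
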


\begin{proof}
Assume that there is $\w{z}\in c_0$ such that $y_n=\alpha_n(\w{z})$ for all $n\in\omega$. Inductively find $(k_n)$ such that $k_0=0$ and $y_{k_n}=\frac{z_{k_{n+1}}}{z_{k_n}}$ for all $n\in\omega$ (in particular, $y_0=\frac{z_{k_1}}{z_0}$). Observe that $z_{k_{n+1}}=z_0\cdot \prod \{y_{k_i} : i\leq n\}$. Since $\w{z}\in c_0$, we get that $\prod \{y_{k_i} : i\in\omega\}=0$. In particular, $\prod \{y_{k_i} : y_{k_i}<1\}=0$, so also $\prod \{y_n : y_n <1\}=0$.

To see the other implication, let $(n_i)$ be the increasing enumeration of the set $\{n : y_n <1\}$. Define $z_{n_0}=1$, $z_{n_{i+1}}=y_{n_i} z_{n_i}$, $z_n = z_{n_0}/y_n$ for $n<n_0$ and $z_n=z_{n_{i+1}}/y_n$ for $n\in (n_i, n_{i+1})$. Clearly, $\w{z}\in (0,\infty)^\omega\cap c_0$ and $y_n=\alpha_n(\w{z})$ for all $n\in\omega$.
\end{proof}

\begin{corollary}
\label{char:c_0-2}
For every ideal $\I$ on $\omega$ we have: $\CR(c_0,\hat{\w{a}}(\w{z}))=\I$ for some $\w{z}\in (0,\infty)^\omega\cap c_0$ if and only if $\I$ is isomorphic to $\Fin, \Fin\oplus\mathcal{P}(\omega)$ or $\{\emptyset\}\otimes\Fin$.
\end{corollary}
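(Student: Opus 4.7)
The forward direction (that $\CR(c_0,\hat{\w{a}}(\w{z}))=\I$ forces $\I$ to be isomorphic to one of the three listed ideals) is immediate from earlier work: Lemma~\ref{lem:z} shows that every $\hat{\w{a}}(\w{z})$ is simple over $\hat{\w{e}}$, so Theorem~\ref{char:c_0} confines the critical ideal (when nontrivial) to the isomorphism classes of $\Fin$, $\Fin\oplus\mathcal{P}(\omega)$, or $\{\emptyset\}\otimes\Fin$. Incidentally, $\CR$ cannot be trivial here, because $\omega\in\CR$ would require $\alpha_n(\w{z})\to\infty$, which is incompatible with the condition $\prod\{y_n:y_n<1\}=0$ of Lemma~\ref{L:alphy}. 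The content of the corollary is therefore the converse: realizing every ideal of each of the three isomorphism classes as $\CR(c_0,\hat{\w{a}}(\w{z}))$.

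For the converse, my general strategy is to prescribe the sequence $(y_n)=(\alpha_n(\w{z}))$ directly, use Lemma~\ref{L:alphy} to produce a witnessing $\w{z}\in(0,\infty)^\omega\cap c_0$, and read off $\CR=\{A:\lim_{n\in A}1/y_n=0\}$ from Proposition~\ref{z:c_0}. Since $\I\cong\Fin$ means $\I=\Fin$, any monotone decreasing $\w{z}\in c_0$ (e.g.\ $z_n=1/(n+1)$) gives $\alpha_n<1$ bounded, so $\CR=\Fin$ by Proposition~\ref{z:c_0}(a). For $\I\cong\Fin\oplus\mathcal{P}(\omega)$, I identify $\I=\{A:A\cap F\in\Fin\}$ for an infinite coinfinite $F\subseteq\omega$ and set $y_n=1/2$ on $F$ and $y_n=n+2$ off $F$; then $\prod\{y_n:y_n<1\}=\prod_{n\in F}(1/2)=0$, while $1/y_n$ is the constant $2$ on $F$ and tends to $0$ off $F$, so the limit along $A$ vanishes precisely when $A\cap F\in\Fin$.

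The delicate case is $\I\cong\{\emptyset\}\otimes\Fin$, written as $\I=\{A:\forall_k\, A\cap D_k\in\Fin\}$ for a partition $(D_k)_{k\in\omega}$ of $\omega$ into infinite pieces. Here the naive recipe from Proposition~\ref{z:c_0}(c) — demand $\inf_{n\in D_k}\alpha_n\to\infty$ — would force $\alpha_n\geq 1$ on a cofinite set, so that $\prod\{y_n:y_n<1\}$ becomes a finite product, not $0$, and Lemma~\ref{L:alphy} cannot be invoked. This tension is the main obstacle I expect. My workaround is to sacrifice the single block $D_0$: set $y_n=1/2$ on $D_0$ and $y_n=k+2$ on $D_k$ for $k\geq 1$. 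Since $D_0$ is infinite, $\prod_{n\in D_0}(1/2)=0$ and Lemma~\ref{L:alphy} still delivers $\w{z}$. The constant value $1/y_n=2$ on $D_0$ is bounded away from zero, so it forces $A\cap D_0\in\Fin$ for every $A\in\CR$; and shrinking $\epsilon$ in the defining condition $\{n\in A:1/y_n\geq\epsilon\}\in\Fin$ progressively forces $A\cap D_k\in\Fin$ for each $k\geq 1$. Hence $\CR$ agrees with $\I$ exactly, completing the construction.
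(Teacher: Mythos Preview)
Your proof is correct. For the converse you follow precisely the paper's route---prescribe $(y_n)$, invoke Lemma~\ref{L:alphy}, read off $\CR$ via Proposition~\ref{z:c_0}---but you fill in the explicit constructions that the paper leaves as ``we can easily find''; in particular your observation that one cannot use Proposition~\ref{z:c_0}(c) directly for the $\{\emptyset\}\otimes\Fin$ case (because it would force $\alpha_n\geq 1$ cofinitely, killing the product condition) and your fix of sacrificing $D_0$ are exactly the sort of detail the paper suppresses. For the forward direction you take a cleaner shortcut than the paper: rather than arguing, as the paper does, that every sequence $(\alpha_n(\w{z}))$ falls into one of the three configurations of Proposition~\ref{z:c_0}(a)--(c), you simply note that $\hat{\w{a}}(\w{z})$ is simple over $\hat{\w{e}}$ and invoke the already-proved Theorem~\ref{char:c_0}; your use of Lemma~\ref{L:alphy} to rule out the trivial critical ideal (which Theorem~\ref{char:c_0} excludes from its hypothesis) is a nice touch.
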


\begin{proof}
Notice that for each $\w{z}\in (0,\infty)^\omega\cap c_0$ the sequence $(\alpha_n(\w{z}))$ is either bounded or there is an infinite $E\subseteq\omega$ such that $(\alpha_n(\w{z}))_{n\in\omega\setminus E}$ is bounded and $\lim_{n\in E}\alpha_n(\w{z})=\infty$ or there is a partition $(D_k)$ of $\omega$ into infinite sets such that $\lim_{k\to\infty}\inf_{n \in D_k} \alpha_n(\w{z})=\infty$ and $\sup_{n \in D_k} \alpha_n(\w{z}) < \infty$ for every $k\in\omega$. Thus, thanks to Proposition \ref{z:c_0}, the ideal $\CR(c_0,\hat{\w{a}}(\w{z}))$ is equal to $\Fin$, generated by $\Fin\cup\{E\}$, for some infinite $E\subseteq\omega$ (in this case it is isomorphic with $\Fin\oplus\mathcal{P}(\omega)$), or of the form $\{A\subseteq\omega:A\cap D_k\in\Fin\text{ for all }k\in\omega\}$, for some partition $(D_k)$ of $\omega$ into infinite sets (in this case it is isomorphic with $\{\emptyset\}\otimes\Fin$). This shows one implication.

If now $\I$ is isomorphic to $\Fin, \Fin\oplus\mathcal{P}(\omega)$ or $\{\emptyset\}\otimes\Fin$, then we can easily find a sequence $(y_n)$ such that $\prod \{y_n : y_n <1\}=0$ and $\I=\{A\subseteq\omega:\lim_{n\in A}\frac{1}{y_n}=0\}$. By Lemma \ref{L:alphy}, there is $\w{z}\in (0,\infty)^\omega\cap c_0$ such that $y_n=\alpha_n(\w{z})$ for all $n$. In particular, $\I=\CR(c_0,\hat{\w{a}}(\w{z}))$ (by Proposition \ref{z:c_0}).
\end{proof}
   
Let us now switch to the case of the space $\ell_p$. 

\begin{proposition}
\label{z:l_p}
Let $\w{z}\in (0,\infty)^\omega\cap \ell_p$. Then $\CR(\ell_p,\hat{\w{a}}(\w{z}))=\I_{\frac{1}{\beta_n(\w{z})}}$, where $\beta_n(\w{z})=\sum_{m>n}\left|\frac{z_m}{z_n}\right|^p$ for all $n\in\omega$.
\end{proposition}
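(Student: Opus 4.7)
The proof is a direct computation combining the structural information from Lemma \ref{lem:z} with the general description of the critical ideal for simple bases over an unconditional basis from Lemma \ref{general-lem2}. Since $\hat{\w{e}}$ is an unconditional $\Fin$-Schauder basis of $\ell_p$ for $1\leq p<\infty$, both lemmas apply.

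The plan is as follows. First, invoke Lemma \ref{lem:z} to conclude that $\hat{\w{a}}(\w{z})$ is simple over $\hat{\w{e}}$ with $D=\omega$, $h=\on{id}$, and $\w{b}_n=-\sum_{m>n}\frac{z_m}{z_n}\w{e}_m$ for each $n\in\omega$. Next, read off the $\ell_p$-norm of $\w{b}_n$:
\[
\|\w{b}_n\|_{\ell_p}^{\,p}=\sum_{m>n}\left|\frac{z_m}{z_n}\right|^p=\beta_n(\w{z}),
\]
so $\|\w{b}_n\|_{\ell_p}=\beta_n(\w{z})^{1/p}$ (note that $\w{z}\in\ell_p$ guarantees $\beta_n(\w{z})<\infty$ and $z_n>0$ guarantees $\w{b}_n\neq 0$, so the simplicity witnesses are well-defined).

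Finally, apply Lemma \ref{general-lem2} and compute:
\begin{align*}
\CR(\ell_p,\hat{\w{a}}(\w{z})) &= \left\{A\subseteq\omega:\sum_{n\in A}\frac{\w{e}_n}{\|\w{b}_n\|_{\ell_p}}\in\ell_p\right\} \\
&= \left\{A\subseteq\omega:\sum_{n\in A}\frac{1}{\|\w{b}_n\|_{\ell_p}^{\,p}}<\infty\right\} \\
&= \left\{A\subseteq\omega:\sum_{n\in A}\frac{1}{\beta_n(\w{z})}<\infty\right\} = \I_{\frac{1}{\beta_n(\w{z})}}.
\end{align*}

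There is no real obstacle here; everything is already packaged by the earlier lemmas. The only minor point worth a sentence in the write-up is the well-definedness of $\I_{1/\beta_n(\w{z})}$ as a summable ideal in the sense of the paper: to match the convention stated in the preliminaries, one should either observe that $\sum_n \frac{1}{\beta_n(\w{z})}=\infty$ (otherwise $\CR(\ell_p,\hat{\w{a}}(\w{z}))=\mathcal{P}(\omega)$ and $\hat{\w{a}}(\w{z})$ is not an ideal Schauder basis) or simply interpret the notation as denoting the family of sets where the series converges.
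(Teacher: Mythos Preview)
Your proof is correct and follows essentially the same route as the paper: invoke Lemma~\ref{lem:z} for the simplicity witnesses, compute $\|\w{b}_n\|_{\ell_p}=\beta_n(\w{z})^{1/p}$, and then apply Lemma~\ref{general-lem2} to read off the critical ideal. Your extra remark on the convention for $\I_{1/\beta_n(\w{z})}$ is a reasonable caveat that the paper leaves implicit.
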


\begin{proof}
Let $(\w{b}_n)$ be as in Lemma \ref{lem:z}. Since $\|\w{b}_n\|_{\ell_p}=(\beta_n(\w{z}))^{1/p}$ for all $n\in\omega$, using Lemmas \ref{general-lem2} and \ref{lem:z}, we have:
\[
\CR(\ell_p,\hat{\w{a}}(\w{z}))=\left\{A\subseteq\omega:\sum_{n\in A}\frac{\w{e}_n}{\|\w{b}_n\|_{\ell_p}}\in \ell_p\right\}=\left\{A\subseteq\omega: \sum_{n\in A}\frac{1}{\beta_n(\w{z})}<\infty\right\}=\I_{\frac{1}{\beta_n(\w{z})}}.
\]
\end{proof}

\begin{remark}
By Theorem $\ref{char:c_0}$ and Corollary \ref{char:c_0-2} we know that in the space $c_0$, for every $\hat{\w{a}}$ simple over $\hat{\w{e}}$ there is some $\w{z}\in (0,\infty)^\omega\cap c_0$ such that $\CR(c_0, \hat{\w{a}})=\CR(c_0, \hat{\w{a}}(\w{z}))$. In the spaces $\ell_p$ the situation is different: by Corollary \ref{cor:non-summable} and Proposition \ref{z:l_p}, there is $\hat{\w{a}}$ simple over $\hat{\w{e}}$ such that $\CR(\ell_p, \hat{\w{a}})\neq\CR(\ell_p, \hat{\w{a}}(\w{z}))$ for all $\w{z}\in (0,\infty)^\omega\cap \ell_p$.
\end{remark}

Next example shows that it is possible to have three $\Fin$-Schauder bases $\hat{\w{a}}$, $\hat{\w{b}}$ and $\hat{\w{c}}$ of $c_0$ such that $\hat{\w{a}}$ is simple over $\hat{\w{b}}$, $\hat{\w{b}}$ is simple over $\hat{\w{c}}$, but $\hat{\w{a}}$ is not simple over $\hat{\w{c}}$.

\begin{example}
\label{ex:simpoversimp}
Let $z_n=\frac{1}{2^n}$ for all $n\in\omega$ and consider $\hat{\w{a}}(\w{z})$. Then $\alpha_n(\w{z})=2$ for all $n\in\omega$, so $\CR(c_0, \hat{\w{a}}(\w{z}))=\Fin$ (by Proposition \ref{z:c_0}) and $\hat{\w{a}}(\w{z})$ is a $\Fin$-Schauder basis of $c_0$. Moreover, $\hat{\w{a}}(\w{z})$ is simple over $\hat{\w{e}}$ with the witnesses $D=\omega$, $h=\on{id}$ and $\w{b}_n=-\sum_{m>n}\frac{z_m}{z_n}\w{e}_m$ for all $n\in\omega$ (by Lemma \ref{lem:z}).

Now we introduce the third $\Fin$-Schauder basis of $c_0$. Let $\w{v}_0=\w{a}_0(\w{z})$ and $\w{v}_n=\w{a}_0(\w{z})+\sum_{i=1}^n 2^{i-1}\w{a}_i(\w{z})$ for $n\geq 1$. Put also $\w{v}_0^\star= \w{a}^\star_0(\w{z})-\w{a}^\star_1(\w{z})$ and $\w{v}_n^\star= \frac{1}{2^{n-1}}\w{a}^\star_n(\w{z})-\frac{1}{2^n}\w{a}^\star_{n+1}(\w{z})$ for $n\geq 1$. Then $\w{v}^\star_n(\w{v}_m)=\delta_{nm}$ for all $n,m\in\omega$ (since $\w{a}^\star_n(\w{a}_m)=\delta_{nm}$ for all $n,m\in\omega$). Note that $\w{v}_n=\w{1}_{n}^\frown \w{z}$, where $\w{1}_n$ is the finite sequence consisting of $n$ ones. Moreover, by Definition \ref{d:Avoidc0}, $\w{v}^\star_0=2\w{e}^\star_0-2\w{e}^\star_1$ and $\w{v}_n^\star= -\w{e}^\star_{n-1}+3\w{e}_n^\star-2\w{e}_{n+1}^\star$ for $n \geq 1$. 

Observe that:
\begin{align*}
S_n(\hat{\w{v}})(\w{x}) & =\w{a}_0(\w{z})\left(\w{a}^\star_0(\w{z})(\w{x})-\frac{\w{a}^\star_{n+1}(\w{z})(\w{x})}{2^n}\right)+\sum_{i=1}^n \w{a}_i(\w{z})\left(\w{a}^\star_i(\w{z})(\w{x})-\frac{\w{a}^\star_{n+1}(\w{z})(\w{x})}{2^{n-i+1}}\right)=\cr
& = S_n(\hat{\w{a}}(\w{z}))(\w{x})-\w{a}^\star_{n+1}(\w{z})(\w{x})\left(\frac{\w{a}_0(\w{z})}{2^n}+\sum_{i=1}^n \frac{\w{a}_i(\w{z})}{2^{n-i+1}}\right).
\end{align*}
Hence, $\hat{\w{v}}$ is simple over $\hat{\w{a}}(\w{z})$ as witnessed by $D'=\omega$, $h'=\on{id}+1$ and $\w{d}_{n+1}=\frac{\w{a}_0(\w{z})}{2^n}+\sum_{i=1}^n \frac{\w{a}_i(\w{z})}{2^{n-i+1}}$ for all $n\in\omega$. On the other hand, $\hat{\w{v}}$ is not simple over $\hat{\w{e}}$, since:
\begin{align*}
S_n(\hat{\w{e}})(\w{x}) - S_n(\hat{\w{v}})(\w{x}) & = S_n(\hat{\w{e}})(\w{x}) - \left(S_n(\hat{\w{a}}(\w{z}))(\w{x}) - \w{a}^\star_{n+1}(\w{z})(\w{x})\w{d}_{n+1}\right)=\cr
& =S_n(\hat{\w{e}})(\w{x}) - \left(S_n(\hat{\w{e}})(\w{x})+\w{e}^\star_{n}(\w{x})\w{b}_{n}\right)+ \w{a}^\star_{n+1}(\w{z})(\w{x})\w{d}_{n+1}=\cr
& =-\w{e}^\star_{n}(\w{x})\w{b}_{n}+\left(2^{n+1}\w{e}^\star_{n+1}(\w{x})-2^n\w{e}^\star_{n}(\w{x})\right)\w{d}_{n+1}=\cr
& =\w{e}^\star_{n+1}(\w{x})2^{n+1}\w{d}_{n+1}-\w{e}^\star_{n}(\w{x})\left(\w{b}_{n}+2^n\w{d}_{n+1}\right).
\end{align*}

Before showing that $\hat{\w{v}}$ is a $\Fin$-Schauder basis of $c_0$, let us make some calculations. Observe that: 
\[
\w{e}^\star_j(\w{d}_{n+1})=\w{e}^\star_j\left(\frac{\w{a}_0(\w{z})}{2^n}+\sum_{i=1}^n \frac{\w{a}_i(\w{z})}{2^{n-i+1}}\right)=\frac{1}{2^j}\left(\frac{1}{2^n}+\sum_{i=1}^n\frac{1}{2^{n-i+1}}\right)=\frac{1}{2^j}\cdot 1=\frac{1}{2^j}
\]
for all $j>n$ and that:
\[
\w{e}^\star_j(\w{d}_{n+1})=\w{e}^\star_j\left(\frac{\w{a}_0(\w{z})}{2^n}+\sum_{i=1}^n \frac{\w{a}_i(\w{z})}{2^{n-i+1}}\right)=\frac{1}{2^j}\left(\frac{1}{2^n}+\sum_{i=1}^j\frac{1}{2^{n-i+1}}\right)=\frac{1}{2^{j}}\cdot\frac{1}{2^{n-j}}=\frac{1}{2^{n}}
\]
for all $j\leq n$. 

Now we will show that $\CR(\hat{\w{v}},c_0)=\Fin$ (i.e., that $\hat{\w{v}}$ is a $\Fin$-Schauder basis of $c_0$). Since $\hat{\w{v}}$ is simple over $\hat{\w{a}}(\w{z})$, from Lemma \ref{general-lem1} we know that $\CR(\hat{\w{v}},c_0)$ is generated by $\Fin$ and sets of the form $A_{\w{x}}=\{n\in\omega: |\w{a}^\star_{n+1}(\w{z})(\w{x})|\geq \frac{1}{\|\w{d}_{n+1}\|_{c_0}}\}$ for $\w{x}\in c_0$. By the above calculations, $\|\w{d}_{n+1}\|_{c_0}=\frac{1}{2^{n}}$ for all $n\in\omega$, which gives us:
\begin{align*}
A_{\w{x}} & =\left\{n\in\omega: |\w{a}^\star_{n+1}(\w{z})(\w{x})|\geq \frac{1}{\|\w{d}_{n+1}\|_{c_0}}\right\}=\left\{n\in\omega: |2^{n+1}\w{e}^\star_{n+1}(\w{x})-2^{n}\w{e}^\star_{n}(\w{x})|\geq 2^n\right\}\subseteq\cr
& \subseteq \left\{n\in\omega: |2\w{e}^\star_{n+1}(\w{x})|\geq \frac{1}{2}\right\}\cup \left\{n\in\omega: |\w{e}^\star_{n}(\w{x})|\geq \frac{1}{2}\right\}\in\Fin
\end{align*}
for all $\w{x}\in c_0$. Thus, $\CR(\hat{\w{v}},c_0)=\Fin$.
\end{example}

\begin{remark}
For completeness, fix any $p\in[1,\infty)$ and observe that $\hat{\w{a}}(\w{z})$, for $\w{z}$ considered in Example \ref{ex:simpoversimp} (i.e., $z_n=\frac{1}{2^n}$ for all $n\in\omega$), is a $\Fin$-Schauder basis of $\ell_p$ (by Proposition \ref{z:l_p}, since $\beta_n(\w{z})=\frac{1}{2^p-1}$ for all $n\in\omega$), while $\CR(\hat{\w{v}},\ell_p)=\I_{1/n}$, where $\hat{\w{v}}$ is as in Example \ref{ex:simpoversimp}. Indeed, from Lemma \ref{general-lem1} we know that $\CR(\hat{\w{v}},\ell_p)$ is generated by $\Fin$ and sets of the form $A_{\w{x}}=\{n\in\omega: |\w{a}^\star_{n+1}(\w{z})(\w{x})|\geq \frac{1}{\|\w{d}_{n+1}\|_{\ell_p}}\}$ for $\w{x}\in \ell_p$. 

By the calculations made in Example \ref{ex:simpoversimp}, we have:
\begin{align*}
\|\w{d}_{n+1}\|_{\ell_p} & =\left(\sum_{i=0}^n \frac{1}{2^{np}}+\sum_{i=n+1}^\infty\frac{1}{2^{ip}}\right)^{1/p}=\cr
& =\left(\frac{n+1}{2^{np}}+\frac{1}{2^{(n+1)p}}\cdot\frac{1}{1-\frac{1}{2^{p}}}\right)^{1/p}=\cr
&=\frac{1}{2^n}\left(n+1+\frac{1}{2^p-1}\right)^{1/p}.
\end{align*}
In particular, $\frac{(n+1)^{1/p}}{2^n}\leq\|\w{d}_{n+1}\|_{\ell_p}\leq\frac{(n+2)^{1/p}}{2^n}$. 

Thus, if $\w{x}\in\ell_p$, then:
\begin{align*}
A_{\w{x}} & =\left\{n\in\omega: |\w{a}^\star_{n+1}(\w{z})(\w{x})|\geq \frac{1}{\|\w{d}_{n+1}\|_{\ell_p}}\right\}\subseteq\left\{n\in\omega: |2^{n+1}\w{e}^\star_{n+1}(\w{x})-2^{n}\w{e}^\star_{n}(\w{x})|\geq \frac{2^n}{(n+2)^{1/p}}\right\}\subseteq\cr
& \subseteq \left\{n\in\omega: |2\w{e}^\star_{n+1}(\w{x})|\geq \frac{1}{2(n+2)^{1/p}}\right\}\cup \left\{n\in\omega: |\w{e}^\star_{n}(\w{x})|\geq \frac{1}{2(n+2)^{1/p}}\right\}\in\I_{1/n}.
\end{align*}
This shows that $\CR(\hat{\w{v}},\ell_p)\subseteq\I_{1/n}$. 

On the other hand, if $A\in \I_{1/n}$, then put $A_i=A\cap\{2n+i:n\in\omega\}$ and define:
\[
    x^i_n =
    \begin{cases*}
    \frac{1}{(n+1)^{1/p}},     & if $n\in A_i$, \\
    0, & otherwise,  \\
    \end{cases*}
\]
for $i=0,1$ and $n\in\omega$. Note that $\w{x}^0$ and $\w{x}^1$ belong to $\ell_p$ (as $A\in\I_{1/n}$) and that if $|\w{e}^\star_{n}(\w{x}^i)|\geq \frac{1}{(n+1)^{1/p}}$ for some $n\in\omega$ and $i=0,1$, then $\w{e}^\star_{n+1}(\w{x}^i)=0$. We have:
\begin{align*}
A & =\bigcup_{i=0}^1 A_i=\bigcup_{i=0}^1 \left\{n\in\omega: |\w{e}^\star_{n}(\w{x}^i)|\geq \frac{1}{(n+1)^{1/p}}\right\}\subseteq\cr
&\subseteq\bigcup_{i=0}^1\left\{n\in\omega: |2^{n+1}\w{e}^\star_{n+1}(\w{x}^i)-2^{n}\w{e}^\star_{n}(\w{x}^i)|\geq \frac{2^n}{(n+1)^{1/p}}\right\}\subseteq\cr
& \subseteq\bigcup_{i=0}^1\left\{n\in\omega: |\w{a}^\star_{n+1}(\w{z})(\w{x}^i)|\geq \frac{1}{\|\w{d}_{n+1}\|_{\ell_p}}\right\}=A_{\w{x}^0}\cup A_{\w{x}^1}\in\CR(\hat{\w{v}},\ell_p).
\end{align*}
Hence, $\CR(\hat{\w{v}},\ell_p)\supseteq\I_{1/n}$.
\end{remark}

\section{Ideal version of FDD}

In this subsection we follow some ideas from \cite{ARTT}, where the Authors noted that every Banach space with FDD has the $\I$-Schauder bases for some ideal $\I$, and used this observation to give an example of a space with $\I$-Schauder basis, but without a standard one. Recall that a Banach space $X$ has finitely dimensional decomposition (FDD) if there exists a sequence of its finite-dimensional subspaces $(X_n)$ such that for every $\w{x} \in X$ there exists a unique sequence $(\w{x}_n)$ with $\w{x}_n \in X_n$ for all $n \in \omega$ and such that $\w{x}=\sum_{n \in \omega} \w{x}_n$.

The definition of $\I$-FDD is straightforward:
\begin{definition}
Let $\I$ be an ideal on $\omega$. A Banach space $X$ is said to have $\I$-FDD if there exists a sequence of its finite-dimensional subspaces $(X_n)$ such that for every $\w{x} \in X$ there exists a unique sequence $(\w{x}_n)$ with $\w{x}_n \in X_n$ for all $n \in \omega$ and such that:
\[
\forall_{\varepsilon>0}\ \left\{n\in\omega:\left\|\w{x}-\sum_{i=0}^n \w{x}_i\right\|_X>\varepsilon\right\}\in\I.
\]
\end{definition}

It is known (see \cite{Szar}) that there exists a space with FDD but without a Schauder basis. In the case of ideal versions the situation is a bit different.

\begin{proposition}
\label{FDDprop1}
Let $X$ be a Banach space. If $X$ has $\I$-FDD for some ideal $\I$ on $\omega$, then $X$ has a $\J$-Schauder basis for some ideal $\J$ on $\omega$. Moreover, if $\I$ is non-tall, then also $\J$ is non-tall.
\end{proposition}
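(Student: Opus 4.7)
The plan is to take a finite Schauder basis of each finite-dimensional piece $X_n$, concatenate them block-by-block, and let the target ideal $\J$ be a natural blow-up of $\I$ along this block structure, engineered so that only block-endpoint partial sums need to be controlled.

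Assume without loss of generality that $X$ is infinite-dimensional and every $X_n$ is non-trivial (otherwise $X$ is finite-dimensional or we truncate the trivial pieces). For each $n$, fix a finite Schauder basis $\w{u}^{(n)}_0,\ldots,\w{u}^{(n)}_{d_n-1}$ of $X_n$ together with its biorthogonal functionals $(\w{u}^{(n)}_i)^\star \colon X_n\to\IK$. Set $\sigma(n)=\sum_{m<n}d_m$, $I_n=[\sigma(n),\sigma(n+1))$, and $E_n=\max I_n$. For $k=\sigma(n)+i\in I_n$, define $\w{v}_k=\w{u}^{(n)}_i$ and
\[
\w{v}_k^\star(\w{x})=(\w{u}^{(n)}_i)^\star(\w{x}_n),
\]
where $(\w{x}_m)_{m\in\omega}$ is the (unique, hence well-defined) $\I$-FDD expansion of $\w{x}\in X$. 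Biorthogonality $\w{v}_k^\star(\w{v}_j)=\delta_{kj}$ is then automatic, because the FDD decomposition of $\w{v}_j$ is concentrated in its own block. Note that continuity of $\w{v}_k^\star$ is not required: the paper's definition of an $\I$-Schauder basis only demands $\w{v}_k^\star\in\IK^X$.

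Now set $\J=\{A\subseteq\omega:\{n\in\omega:E_n\in A\}\in\I\}$. A quick check shows that $\J$ is a nontrivial ideal on $\omega$ (we have $\omega\notin\J$ since $\omega\notin\I$). The crucial observation is that at block-endpoints the partial sums match the FDD partial sums, $S_{E_n}(\hat{\w{v}})(\w{x})=\sum_{i=0}^n \w{x}_i$. Consequently, for every $\w{x}\in X$ and $\varepsilon>0$, writing $B(\varepsilon)=\{k:\|\w{x}-S_k(\hat{\w{v}})(\w{x})\|_X\geq\varepsilon\}$, one has
\[
\{n\in\omega:E_n\in B(\varepsilon)\}=\left\{n\in\omega:\left\|\w{x}-\sum_{i=0}^n\w{x}_i\right\|_X\geq\varepsilon\right\}\in\I,
\]
so $B(\varepsilon)\in\J$ and $\hat{\w{v}}$ is a $\J$-Schauder basis of $X$. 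The point is that \emph{no} control on the intermediate partial sums $S_k(\hat{\w{v}})(\w{x})$ for $k\in I_n\setminus\{E_n\}$ is needed, because $\J$ already contains every subset of $\omega\setminus\{E_n:n\in\omega\}$.

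For the "moreover" clause, suppose $\I$ is non-tall with witness $A\notin\I$ satisfying $\I\restriction A=\Fin\restriction A$. I claim $A^*=\{E_n:n\in A\}$ witnesses non-tallness of $\J$: clearly $A^*\notin\J$ since $\{n:E_n\in A^*\}=A\notin\I$, and any $C\subseteq A^*$ with $C\in\J$ corresponds to $D=\{n:E_n\in C\}\subseteq A$ with $D\in\I\restriction A=\Fin\restriction A$, forcing $C$ to be finite. The only subtlety I anticipate is the verification of biorthogonality in step 2 without any continuity hypothesis on a projection $X\to X_n$; this is precisely why we define $\w{v}_k^\star$ directly via the FDD decomposition rather than by Hahn--Banach extension or by composition with a (possibly discontinuous) projection.
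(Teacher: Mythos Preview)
Your proof is correct and follows essentially the same approach as the paper's: concatenate finite bases of the $X_n$ into a single sequence, define $\J$ so that membership of $A$ depends only on which block-endpoints lie in $A$, and observe that the block-endpoint partial sums coincide with the FDD partial sums, so the $\I$-FDD hypothesis immediately yields $\J$-convergence. Your explicit construction of the coordinate functionals via the FDD decomposition and verification of biorthogonality is slightly more careful than the paper's proof, which leaves these points implicit.
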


\begin{proof}
Let $(X_n)$ be the sequence of finite-dimensional subspaces of $X$ from the difinition of $\I$-FDD. Fix sequences $(\w{u}_n)$ and $0=k_0<k_1<k_2\ldots$ such that $\{\w{u}_i : i \in [k_{n}, k_{n+1})\}$ is a basis of $X_n$. Define $\J$ by:
\[
A \in \J \Longleftrightarrow \{ n \in \omega : k_{n+1}-1 \in A \} \in \I.
\]
Note that if $\I$ is non-tall, then there is some infinite $E\subseteq\omega$ such that $\I\restriction E=\Fin\restriction E$. In this case also $\J$ is non-tall, as $\J\restriction\{n\in\omega:k_{n+1}-1 \in E\}=\Fin\restriction\{n\in\omega:k_{n+1}-1 \in E\}$.
    
Take $x \in X$ and find unique $x_n \in X_n$, for $n\in\omega$, such as in the definition of $\I$-FDD. For every $n\in\omega$ there are also unique $\alpha_i$, for $k_n\leq i<k_{n+1}$, such that $x_n = \sum_{i=k_n}^{k_{n+1}-1}\alpha_i \w{u}_i$. Fix any $\varepsilon>0$ and denote $A_\varepsilon = \{ j \in \omega : \|\w{x}-\sum_{i=0}^{j}\alpha_i \w{u}_i \|> \varepsilon\}$. We need to show that $A_\varepsilon \in\J$. Observe that:
    \[
    \{ n \in \omega : k_{n+1}-1 \in A_\varepsilon \} = \left\{ n \in \omega : \left\|\w{x}-\sum_{i=0}^{k_{n+1}-1}\alpha_i \w{u}_i \right\|> \varepsilon \right\} = \left\{ n \in \omega : \left\|\w{x}-\sum_{i=0}^{n}\w{x}_i \right\|> \varepsilon \right\} \in \I,
    \]
thus $A_\varepsilon \in \J$.
\end{proof}

\begin{remark}
Note that if $\I$ is an analytic ideal, then the ideal $\J$ produced in the proof of Proposition \ref{FDDprop1} is also analytic. Therefore, by combining this observation with continuity result from \cite{RKS}, we see that projections $P_n\colon X \to X_n$ given by FDD are also continuous. 
\end{remark}

We may also inverse the above reasoning -- clearly, spaces with Schauder bases are FDD, but the ideal case is not clear. However, we may solve the special case of some ideals.

\begin{proposition}
\label{FDDprop2}
Let $X$ be a Banach space. If there is an $\I$-Schauder basis of $X$ for some non-tall ideal $\I$ on $\omega$, then $X$ has FDD.
\end{proposition}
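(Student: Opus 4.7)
The plan is to mirror the construction in the proof of Proposition \ref{FDDprop1} but in reverse: use non-tallness of $\I$ to carve $\omega$ into consecutive finite blocks along which the partial sums of the $\I$-Schauder basis already converge in norm, and then declare the spans of basis vectors in each block to be the FDD subspaces.

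First, since $\I$ is non-tall, choose an infinite $E \subseteq \omega$ with $\I\restriction E = \Fin \restriction E$. If $\omega\setminus E$ happens to be finite, then $\I = \Fin$ and the conclusion is immediate, so assume $\omega\setminus E$ is infinite. Enumerate $E = \{e_0 < e_1 < \ldots\}$, set $e_{-1} = -1$, put $I_n = \{i\in\omega : e_{n-1} < i \leq e_n\}$ and $X_n = \on{span}\{\w{a}_i : i\in I_n\}$, which is finite-dimensional. Note that the full sequence $(\w{a}_i)$ is linearly independent because of the condition $\w{a}_i^\star(\w{a}_j) = \delta_{ij}$.

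For the existence half, given $\w{x}\in X$ set $\w{x}_n = \sum_{i\in I_n} \w{a}_i^\star(\w{x})\w{a}_i \in X_n$, so that $\sum_{m=0}^n \w{x}_m = S_{e_n}(\hat{\w{a}})(\w{x})$. Since $\hat{\w{a}}$ is an $\I$-Schauder basis, for each $\varepsilon>0$ the set $\{j\in\omega : \|\w{x}-S_j(\hat{\w{a}})(\w{x})\|_X\geq\varepsilon\}$ lies in $\I$, hence meets $E$ in a finite set. Thus $S_{e_n}(\hat{\w{a}})(\w{x})\to \w{x}$ in norm, i.e., $\sum_n \w{x}_n = \w{x}$. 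For the uniqueness half, suppose $\w{x} = \sum_n \w{y}_n$ in norm with $\w{y}_n\in X_n$; by linear independence write $\w{y}_n = \sum_{i\in I_n}\beta_i \w{a}_i$ for uniquely determined scalars $\beta_i$. The partial sums $\tilde{T}_j = \sum_{i=0}^j \beta_i \w{a}_i$ then satisfy $\tilde{T}_{e_k} = \sum_{m=0}^k \w{y}_m \to \w{x}$. Introduce the auxiliary ideal $\J = \{A\subseteq\omega : A\cap E\in\Fin\}$, which is nontrivial since $E$ is infinite. For every $\varepsilon>0$, the set $\{j\in\omega : \|\w{x}-\tilde{T}_j\|_X\geq\varepsilon\}$ meets $E$ only in the finite set $\{e_k : \|\w{x}-\tilde{T}_{e_k}\|_X\geq\varepsilon\}$, so it belongs to $\J$. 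Thus $\w{x} = \sum_{n,\J}\beta_n \w{a}_n$, and Proposition \ref{P:wspolczynniki} forces $\beta_i = \w{a}_i^\star(\w{x})$ for every $i$, so $\w{y}_n = \w{x}_n$.

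The main obstacle is the uniqueness step: one needs to cook up a second nontrivial ideal $\J$ adapted to the hypothetical decomposition so that Proposition \ref{P:wspolczynniki} is applicable, and then check that along $E$ the grouped partial sums $\tilde{T}_{e_k}$ coincide with the partial sums of $\sum_n \w{y}_n$ while outside $E$ the lack of control is precisely what $\J$ is designed to absorb. Once that is done, the FDD structure is straightforward, and the trivial case $\omega\setminus E\in\Fin$ (forcing $\I=\Fin$) has to be noted separately as a corner case.
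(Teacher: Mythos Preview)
Your proof is correct and follows the same construction as the paper: choose an infinite $E$ on which $\I$ restricts to $\Fin$, block the basis along $E$, and read off the FDD. Your uniqueness argument via the auxiliary ideal $\J=\{A:A\cap E\in\Fin\}$ and Proposition~\ref{P:wspolczynniki} is more explicit than the paper's one-line appeal to uniqueness of the coefficients $\w{a}_n^\star(\w{x})$, and the separate treatment of the corner case $\omega\setminus E\in\Fin$ is harmless but unnecessary, since the main argument already covers it.
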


\begin{proof}
Let $\hat{\w{a}}$ be the $\I$-Schauder basis of $X$. Since $\I$ is non-tall, there is an infinite $A\subseteq\omega$ such that $\I\restriction A=\Fin\restriction A$. Let $(n_k)$ be the increasing enumeration of the set $A$. Define $X_0=\on{span}\{\w{a}_{0}, \w{a}_{1}, \ldots, \w{a}_{n_{0}}\}$ and $X_k= \on{span}\{\w{a}_{n_{k-1}+1}, \w{a}_{n_k +2}, \ldots, \w{a}_{n_{k}}\} $ for all $k\in\omega\setminus\{0\}$. We claim that $(X_k)$ witnesses that $X$ has FDD. Fix $\w{x}\in X$ and set $\w{x}_0=\sum_{i=0}^{n_{0}}\w{a}^\star_i(\w{x})\w{a}_i\in X_0$ and $\w{x}_k=\sum_{i=n_{k-1}+1}^{n_{k}}\w{a}^\star_i(\w{x})\w{a}_i\in X_k$. Let $\varepsilon>0$ be arbitrary. Since $\hat{\w{a}}$ is an $\I$-Schauder basis of $X$, we have $A_\varepsilon=\left\{j\in \omega: \left\|\w{x}-\sum_{i=0}^{j} \w{a}^\star_i(\w{x})\w{a}_i\right\|_X>\varepsilon \right\}\in\I$. Denote $F=A_\varepsilon\cap A$ and observe that $F\in\Fin$, as $\I\restriction A=\Fin\restriction A$. We have:
\[
\left\{k\in\omega: \left\|\w{x}-\sum_{i=0}^k \w{x}_i\right\|_X>\varepsilon \right\}=\left\{k\in\omega: \left\|\w{x}-\sum_{i=0}^{n_k} \w{a}^\star_i(\w{x})\w{a}_i\right\|_X>\varepsilon \right\}=\left\{k\in\omega:n_k\in F\right\}\in\Fin.
\]
Moreover, such a decomposition is unique, since the scalars $\w{a}^\star_n(\w{x}))$ are unique.
\end{proof}

\begin{theorem}
The following are equivalent for every Banach space $X$:
    \begin{itemize}
        \item[(a)] $X$ has FDD;
        \item[(b)] $X$ has $\I$-FDD for some non-tall ideal $\I$ on $\omega$;
        \item[(c)] $X$ has $\I$-Schuader basis for some non-tall ideal $\I$ on $\omega$. 
    \end{itemize}
\end{theorem}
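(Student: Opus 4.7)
The plan is to show that this theorem is essentially a bookkeeping corollary of the two propositions that immediately precede it, so the whole argument reduces to verifying a cycle of three implications.

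First, for (a) $\implies$ (b), I would simply observe that having FDD is the same as having $\Fin$-FDD (the two definitions coincide when $\I=\Fin$, since $\Fin$-convergence of the partial sums means they eventually stay within $\varepsilon$), and $\Fin$ is non-tall (the complement of any infinite set $E$ witnesses non-tallness because $\Fin\restriction E=[E]^{<\omega}$). So (a) trivially gives (b) with $\I=\Fin$.

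Next, (b) $\implies$ (c) is exactly the content of Proposition \ref{FDDprop1}: the construction there produces a $\J$-Schauder basis from any $\I$-FDD, and the ``moreover'' clause guarantees that non-tallness of $\I$ transfers to non-tallness of $\J$. So no new argument is needed; one just cites the proposition.

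Finally, (c) $\implies$ (a) is precisely Proposition \ref{FDDprop2}: from a non-tall $\I$-Schauder basis one extracts an infinite set $A$ with $\I\restriction A = \Fin\restriction A$, groups the basis vectors into finite blocks between consecutive elements of $A$, and verifies that these blocks give an ordinary FDD of $X$. Again, the proposition applies verbatim.

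The main ``obstacle'' is therefore purely organizational rather than mathematical: all the real work has already been done in the two preceding propositions, and the only subtlety to double-check is that the hypothesis ``non-tall'' passes through cleanly in both directions (trivially for $\Fin$ in (a)$\implies$(b), and via the explicit witness set transported by the block structure in (b)$\implies$(c)). The proof itself will read simply as ``(a)$\implies$(b) is trivial with $\I=\Fin$; (b)$\implies$(c) follows from Proposition \ref{FDDprop1}; (c)$\implies$(a) follows from Proposition \ref{FDDprop2}.''
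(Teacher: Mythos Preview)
Your proposal is correct and matches the paper's own proof exactly: the paper writes ``(a)$\implies$(b) is obvious, (b)$\implies$(c) follows from Proposition~\ref{FDDprop1} and (c)$\implies$(a) is proved in Proposition~\ref{FDDprop2}.'' Your only slight imprecision is the phrase ``the complement of any infinite set $E$ witnesses non-tallness''; you simply mean that any infinite $E$ satisfies $\Fin\restriction E=[E]^{<\omega}$, so $\Fin$ is non-tall.
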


\begin{proof}
(a)$\implies$(b) is obvious, (b)$\implies$(c) follows from Proposition \ref{FDDprop1} and (c)$\implies$(a) is proved in Proposition \ref{FDDprop2}.
\end{proof}

\begin{theorem}
The following are equivalent for every Banach space $X$:
    \begin{itemize}
        \item[(a)] $X$ has $\I$-FDD for some ideal $\I$ on $\omega$;
        \item[(b)] $X$ has $\I$-Schauder basis for some ideal $\I$ on $\omega$. 
    \end{itemize}
\end{theorem}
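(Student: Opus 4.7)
The plan is to observe that both implications reduce to results already in hand, with the non-tallness assumption from the preceding theorem playing no role whatsoever.

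For (a) $\Rightarrow$ (b) I would simply invoke Proposition \ref{FDDprop1}: its conclusion produces a $\J$-Schauder basis from an $\I$-FDD, and the non-tallness clause there is used only to transfer non-tallness from $\I$ to $\J$, which is not needed here.

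For (b) $\Rightarrow$ (a), given an $\I$-Schauder basis $\hat{\w{a}} = (\w{a}_n, \w{a}_n^\star)$, the plan is to take the one-dimensional subspaces $X_n := \mathrm{span}\{\w{a}_n\}$, which are well-defined because $\w{a}_n^\star(\w{a}_n) = 1$ forces $\w{a}_n \neq 0$. For each $\w{x} \in X$ the natural candidate is $\w{x}_n := \w{a}_n^\star(\w{x}) \w{a}_n \in X_n$; then $\sum_{i=0}^n \w{x}_i = S_n(\hat{\w{a}})(\w{x})$, which $\I$-converges to $\w{x}$ by hypothesis, so the existence half of the $\I$-FDD condition is immediate, with the witnessing ideal simply taken to be $\I$ itself.

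The one nontrivial step will be uniqueness of the decomposition. If $\w{y}_n = \beta_n \w{a}_n \in X_n$ also satisfies $\sum_{i=0}^n \w{y}_i \to_\I \w{x}$, then $\w{x}$ admits two $\I$-convergent expansions along $(\w{a}_n)$, and I would invoke Proposition \ref{P:wspolczynniki} with $\J = \I$ to conclude $\beta_n = \w{a}_n^\star(\w{x})$ for every $n$. I expect this uniqueness step to be the main (and really the only) point demanding attention, since it is the spot where one silently uses that the coordinate functionals are well-behaved enough to give $\w{a}_n \notin \overline{\mathrm{span}\{\w{a}_m : m \neq n\}}$, so that Lemma \ref{L:wspolczynniki} can operate inside Proposition \ref{P:wspolczynniki}.
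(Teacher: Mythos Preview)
Your proof is correct and matches the paper's approach: (a)$\Rightarrow$(b) via Proposition~\ref{FDDprop1}, and (b)$\Rightarrow$(a) by the one-dimensional-subspace construction $X_n=\mathrm{span}\{\w{a}_n\}$, which the paper simply labels ``obvious''. Your extra care about uniqueness via Proposition~\ref{P:wspolczynniki} is a reasonable elaboration of what ``obvious'' entails here.
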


\begin{proof}
The implication (b)$\implies$(a) is obvious, while (a)$\implies$(b) follows from Proposition \ref{FDDprop1}.
\end{proof}

\section{Normed vector spaces}\label{S:normed}

In this Section we will show that for every ideal $\I$ on $\omega$ there are a normed vector space $X$ and $\hat{\w{a}}$ such that $\CR(X,\hat{\w{a}})=\I$ (ideals $\CR(X,\hat{\w{a}})$ were defined only for Banach spaces $X$, but this definition can be naturally extended).

\begin{theorem}
If $\I$ is any ideal on $\omega$, then there are a normed vector space $X$ and $\hat{\w{a}}\in(X\times X^\star)^\omega$ such that $\CR(X,\hat{\w{a}})=\I$.
\end{theorem}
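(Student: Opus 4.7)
The plan is to realize $\I$ as $\CR(X,\hat{\w{a}})$ by choosing $X$ so that, for each $A\in\I$, there is a concrete ``test vector'' $\w{v}_A\in X$ whose associated set $A_{\w{v}_A}$ from Lemma~\ref{general-lem1} equals $A$. Since $X$ is only required to be a normed space, the tail sums need not converge in $X$ --- this is exactly the freedom that lets us drop the P-ideal/$\bf{F_\sigma}$/etc.\ restrictions that were forced on $\CR(X,\hat{\w{a}})$ in the Banach setting (compare Proposition~\ref{general-P-ideal}).

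Concretely, I would take
\[
X=\{\w{x}\in c_0:\{n\in\omega:x_{n+1}\neq 0\}\in\I\}
\]
equipped with the sup norm, and $\hat{\w{u}}=\hat{\w{e}}$. Closure of $\I$ under finite unions and $\Fin\subseteq\I$ make $X$ a linear subspace of $c_0$, and $\hat{\w{e}}$ is a $\Fin$-Schauder basis of $X$, because $\|\w{x}-S_n(\hat{\w{e}})(\w{x})\|_\infty=\sup_{i>n}|x_i|\to 0$ for every $\w{x}\in c_0$. Applying Lemma~\ref{lem:any_sequence} with $t_k=k$ and $s_k=k+1$ then produces $\hat{\w{a}}$ simple over $\hat{\w{e}}$ with $D=\omega$, $h(n)=n+1$, and $\|\w{b}_{n+1}\|_X=n+1$. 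By Lemma~\ref{general-lem1}, which is valid in normed spaces by Remark~\ref{remark-general-lem1}, $\CR(X,\hat{\w{a}})$ is the ideal generated by $\Fin$ together with the sets
\[
A_{\w{x}}=\{n\in\omega:|x_{n+1}|\geq 1/(n+1)\},\qquad \w{x}\in X.
\]

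Both inclusions are then immediate. First, every generator $A_{\w{x}}$ is a subset of $\{n:x_{n+1}\neq 0\}\in\I$, so $\CR(X,\hat{\w{a}})\subseteq\I$. Conversely, given $A\in\I$, define $\w{v}_A\in\mathbb{R}^\omega$ by $(\w{v}_A)_{n+1}=1/(n+1)$ when $n\in A$ and $(\w{v}_A)_m=0$ otherwise. Then $\w{v}_A\in c_0$, $\{n:(\w{v}_A)_{n+1}\neq 0\}=A\in\I$ (hence $\w{v}_A\in X$), and a direct computation gives $A_{\w{v}_A}=A$, so $A\in\CR(X,\hat{\w{a}})$. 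The one subtlety worth flagging is the shift built into Lemma~\ref{lem:any_sequence}: because $h(n)=n+1$, the set $A_{\w{x}}$ probes $\w{x}$ at position $n+1$ rather than $n$. Building this shift into the definition of $X$ (requiring $\{n:x_{n+1}\neq 0\}\in\I$ instead of $\mathrm{supp}(\w{x})\in\I$) absorbs the mismatch at the space-definition level, so the argument works for \emph{every} ideal $\I$ without any shift-invariance hypothesis.
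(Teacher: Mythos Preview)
Your proof is correct and essentially identical to the paper's own argument. The paper introduces the auxiliary ideal $\J=\{A\subseteq\omega:A\setminus\{0\}=B+1\text{ for some }B\in\I\}$ and works in $c_\J=\{\w{x}\in c_0:\supp(\w{x})\in\J\}$, but since $A\in\J$ iff $\{n:n+1\in A\}\in\I$, this $c_\J$ is exactly your space $X=\{\w{x}\in c_0:\{n:x_{n+1}\neq 0\}\in\I\}$; the application of Lemma~\ref{lem:any_sequence} (with $t_k=k$, $s_k=k+1$) and the verification of both inclusions via Lemma~\ref{general-lem1} proceed in the same way.
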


\begin{proof}
Let $\J=\{A\subseteq\omega: A\setminus\{0\}=B+1\text{ for some }B\in\I\}$, where $B+1=\{n+1:n\in B\}$. Note that $\J$ is an ideal on $\omega$. Consider $c_{\J}=\{\w{x}\in c_0:\{n\in\omega:x_n\neq 0\}\in\J\}$. Clearly, it is a vector space normed by the supremum norm. 

Using Lemma \ref{lem:any_sequence}, find $\hat{\w{a}}$ with $\w{a}_n\in c_{00}$ (so also $\w{a}_n\in c_{\J}$) for all $n\in\omega$, which is simple over $\hat{\w{e}}$ with witnesses $D=\omega$, $h=\on{id}+1$ and some $(\w{b}_{n+1})$ such that $\|\w{b}_{n+1}\|_{c_0}=n+1$. 

By Lemma \ref{general-lem1} and Remark \ref{remark-general-lem1}, $\CR(X,\hat{\w{a}})$ is the ideal generated by $\Fin$ and all sets of the form:
\[
A_{\w{x}}=\left\{n\in \omega: |\w{e}^\star_{n+1}(\w{x})|\geq\frac{1}{n+1}\right\},
\]
for $\w{x}\in c_{\J}$. We will show that $\CR(X,\hat{\w{a}})=\I$. 

$\CR(X,\hat{\w{a}})\subseteq\I$: Let $\w{x}\in c_{\J}$. Then $A_{\w{x}}\subseteq \{n\in\omega:x_{n+1}\neq 0\}$, so $A_{\w{x}}+1\subseteq \{n\in\omega:x_{n}\neq 0\}\in\J$. Hence, $A_{\w{x}}\in\I$.

$\CR(X,\hat{\w{a}})\supseteq\I$: Let $A\in\I$. Define:
\[
    x_n =
    \begin{cases*}
    \frac{1}{n},     & if $n-1\in A$, \\
    0, & otherwise.  \\
    \end{cases*}
  \]
Observe that $\left\{n\in\omega:x_{n}\neq 0\}=\{n\in\omega:n-1\in A\right\}=A+1\in\J$, so $\w{x}\in c_{\J}$. Moreover, $A_{\w{x}}=\{n\in\omega:|x_{n+1}|\geq\frac{1}{n+1}\}=A$.
\end{proof}

\section{Questions}
We conclude our paper with a list of open questions concerning ideal bases. First of them is somehow motivated by Section \ref{S:avoidingC}.

\begin{question}
    Suppose that $X\in \{c_0, \ell_p\}$ and $(\w{a}_n)\subseteq X$ is such that for every $\w{x}\in X$ there exists a unique sequence of scalars $(\alpha_{n})$ such that for every $k \in \omega$ we have $\w{x}(k)=\sum_{n=0}^\infty \alpha_{k} \left(\w{a}_n\right) (k)$. Is there some ideal $\I$, which makes it an $\I$-Schauder basis of $X$? 
\end{question}

Recall that by Theorem \ref{T:allP}, for every non-pathological analytic $P$-ideal $\I$ there are a Banach space $X\subseteq\mathbb{R}^\omega$ and $\hat{\w{a}}$ simple over $\hat{\w{e}}$ such that $\CR(X,\hat{\w{a}})=\I$.

\begin{question}
Let $\I$ be an analytic $P$-ideals. Are there a Banach space $X$ and $\hat{\w{a}}$ such that $\CR(X,\hat{\w{a}})=\I$? 
\end{question}

By \cite[Theorem B]{RKS} we know that each critical ideal in a Banach space is necessarily analytic, provided that coordinate functionals are continuous. This result suggest to ask some other similar questions. Note that all critical ideals in Banach spaces, which we present in this paper, are P-ideals.

\begin{question}
\label{q1}
Is $\CR(X,\hat{\w{a}})$ necessarily a P-ideal, provided that $X$ is a Banach space?
\end{question}

Note that every analytic P-ideal is $\bf{F_{\sigma\delta}}$ (see \cite[Lemma 1.2.2 and Theorem 1.2.5]{Farah}). Thus, a positive answer to Question \ref{q1} combined with the above mentioned result \cite[Theorem B]{RKS} would show that each critical ideal in a Banach space is necessarily $\bf{F_{\sigma\delta}}$, provided that coordinate functionals are continuous.

\cite[Theorem 1.18]{ARTT} ensures that any critical ideal may be enlarged to a Borel ideal, provided that coordinate functionals are continuous. However, the following question remains open.

\begin{question}
May a critical ideal in some Banach space be analytic, but not Borel?
\end{question}

Next question is also motivated by \cite[Theorem 1.18]{ARTT} (see also \cite[Question 2]{RKS}).

\begin{question}
Is there some Borel class $\Gamma$ such that for every ideal Schauder basis $\hat{\w{a}}$ there exists an ideal $\I\in\Gamma$ such that $\hat{\w{a}}$ is an $\I$-Schauder basis? If the answer is positive, then what is the smallest such $\Gamma$?
\end{question}

Note that a positive answer to Question \ref{q1} would mean that $\Gamma=\bf{F_{\sigma\delta}}$ works for all ideal Schauder bases with continuous coordinate functionals.

The next two questions seem to be most important (and probably hardest) problems in this field:

\begin{question}\cite[Question 1]{RKS}
Are coordinate functionals necessarily continuous for all ideal bases?     
\end{question}

\begin{question}\label{Q:allspaces}
Let $X$ be a separable Banach space. Does $X$ necessarily contain an $\I$-Schauder basis for some nontrivial ideal $\I$?
\end{question}

In view of \cite[Example 1.14]{ARTT} answer for the following question could be viewed as a step towards answering Question \ref{Q:allspaces}.

\begin{question}
Is there a Banach space without FDD, yet equipped with $\I$-Schauder basis for some nontrivial ideal $\I$?
\end{question}

Section \ref{S:normed} suggest the following version of the Question \ref{Q:allspaces}.

\begin{question}
Let $X$ be a separable normed space. Does $X$ necessarily admit an $\I$-Schauder basis for some nontrivial ideal $\I$?
\end{question}

\bibliographystyle{plain}

\end{document}